\newcommand{\R}{\mathbb{R}}
\newcommand{\N}{\mathbb{N}}
\newcommand{\s}{\sigma}
\renewcommand{\a}{\alpha}
\newcommand{\C}{\mathbb{C}}
\newcommand{\sgn}{\operatorname{sgn}}
\newtheorem{thm}{Theorem}[section]
\newtheorem{prop}[thm]{Proposition}
\newtheorem{cor}[thm]{Corollary}
\newtheorem{lem}[thm]{Lemma}
\theoremstyle{definition}
\newtheorem{defn}[thm]{Definition}
\newtheorem{rem}[thm]{Remark}
\numberwithin{equation}{section}
\author[A. Bernardis]{Ana Bernardis}
\author[F. J. Mart\'in-Reyes]{Francisco J. Mart\'in-Reyes}
\address{Departamento de An\'alisis Matem\'atico\\
         Universidad de M\'alaga\\
         29071 M\'alaga, Spain}
\email{ana.bernardis@gmail.com, martin\_reyes@uma.es}
\author[P. R. Stinga]{Pablo Ra\'ul Stinga}
\address{Department of Mathematics\\
Iowa State University\\
396 Carver Hall, Ames\\
IA 50011, United States of America}
\email{stinga@iastate.edu}
\author[J. L. Torrea]{Jos\'e L. Torrea}
\address{Departamento de Matem\'aticas\\
         Universidad Aut\'onoma de Madrid\\
         and\\
         ICMAT-CSIC-UAM-UCM-UC3M\\
         28049 Madrid, Spain}
\email{joseluis.torrea@uam.es}
\thanks{The first author was partially supported by Consejo Nacional de Investigaciones Cient\'{\i}ficas y T\'ecnicas and Universidad Nacional del Litoral (Argentina). The first and second authors were supported by grant MTM2011-28149-C02-02 from Spanish Government (Ministerio de Econom\'{\i}a y Competitividad) and grant FQM-354 from Junta de Andaluc\'{\i}a.
The third and fourth authors were supported by grant MTM2011-28149-C02-01 from Spanish Government (Ministerio de Econom\'{\i}a y Competitividad).}
\keywords{Nonlocal equations, maximum principle, extension problem, inversion, one-sided spaces}
\subjclass[2010]{Primary: 35R11, 34A08. Secondary: 26A33, 35A08, 35B50}
\begin{document}

\title[Nonlocal one-sided equations]{Maximum principles, extension problem and inversion \\
for nonlocal one-sided equations}

\begin{abstract}
We study one-sided nonlocal equations of the form
$$\int_{x_0}^\infty\frac{u(x)-u(x_0)}{(x-x_0)^{1+\alpha}}\,dx=f(x_0),$$
on the real line. Notice that to compute this nonlocal operator of order $0<\alpha<1$ at a point $x_0$
we need to know the values of $u(x)$ to the right of $x_0$, that is, for $x\geq x_0$.
We show that the operator above corresponds to a
fractional power of a one-sided first order derivative.
Maximum principles and a characterization with an extension problem in
the spirit of Caffarelli--Silvestre and Stinga--Torrea are proved. It is also shown that these fractional
equations can be solved in the general setting of weighted one-sided spaces.
In this regard we present suitable inversion results.
Along the way we are able to unify and clarify
several notions of fractional derivatives found in the literature.
\end{abstract}

\maketitle

\section{Introduction}

We analyze equations of the form
\begin{equation}\label{u}
\int_{x_0}^\infty\frac{u(x)-u(x_0)}{(x-x_0)^{1+\alpha}}\,dx=f(x_0),
\end{equation}
on $\R$. Expressions like the nonlocal operator above are in general connected with different notions of \textit{fractional} derivatives.  If the name ``derivative'' is reasonable, the object defined in \eqref{u} should satisfy,
in our opinion,  some 
of the fundamental properties of the true derivative. Even more, it would be desirable  to see the equation in (\ref{u}) as a certain limit of a classical local differential equation. If that is possible, then the  theory of partial differential equations could be applied to the classical equation and then obtain as  a consequence some properties for the fractional derivative in \eqref{u}. 
Finally, one of the important tasks would be to find spaces in which we can solve the equation \eqref{u}. In other words,  from the point of view of operator theory, something should be said 
about the inverse operator $f\rightarrow u.$ Along this paper all these questions are treated. In this flow of ideas, we  establish some maximum principles, see Theorem \ref{thm:A} and Corollary \ref{cor:1}, we show that the fractional derivative defined above is a Dirichlet-to-Neumann operator of a classical local PDE equation, see Theorem \ref{thm:B}, and  finally we solve the equation in some Lebesgue spaces related with the one-sided nature of the expression (\ref{u}), see Theorems \ref{thm:C} and \ref{thm:D}. 

Obviously one of our primary duties is to locate the operator in a framework for which the name ``fractional  derivative'' has sense. In order to do that in a reasonable way let us make some discussions about expressions like \eqref{u}. 

The expression $d^n y/dx^n$ was introduced  by G. W. Leibniz to denote derivatives of higher
integer order.  A natural thought  has been to extend the definition to
non integers values of $n$.  In September 1695, G. F. Antoine, Marquis de 
 L'H\^opital,  wrote a letter to  Leibniz asking ``What if $n$ be $1/2$?''.
This letter and  Leibniz's answer   are considered  the starting point of \textit{fractional calculus}, see \cite{Ros}.
Since then a lot of effort has been devoted in order to define and apply fractional derivatives
and fractional integrals. It is interesting to notice that different notions
of fractional derivatives and integrals have been used in Physics. For example in 1823,
N. H. Abel  used  fractional operations in the  formulation of the tautochrone problem, see \cite{Ros}.

The 19th century witnessed a lot of activity in the area.
The important contribution of Liouville, together with the names of Riemann and Weyl,
are constantly present in the theory of
fractional calculus.
Along this paper we shall consider the following fractional integral operators
\begin{equation}\label{W}
W_\alpha f(x) = \frac1{\Gamma(\alpha)} \int_x^\infty \frac{f(t)}{(t-x)^{1-\alpha}}\,dt
\end{equation}
and 
\begin{equation}\label{R}
R_\alpha f(x)= \frac1{\Gamma(\alpha)} \int_{-\infty}^x  \frac{f(t)}{(x-t)^{1-\alpha}}\,dt.
\footnote{These operators are usually called 
the Weyl and Riemann--Liouville fractional integrals, respectively.}
\end{equation}

There is much less consensus with respect to the definition of fractional derivatives.
From our point of view, it seems natural to think that after accepting the notions of $W_\alpha$ 
 and $R_\alpha$ as good definitions for fractional integrals, the corresponding fractional derivatives
 should satisfy a sort of ``Fundamental  Theorem of Calculus''.
In other words, the  composition of a fractional integral and a fractional derivative should give the identity,
or, which is the same, one should be able to find $u$ in \eqref{u} by taking the inverse operator.

On the other hand, it is well known that for good functions $f$ we have the Fourier transform relation
$$\widehat{\frac{d^{2}}{d x^2}f}(\xi)=-|\xi|^2\widehat{f}(\xi).$$
Therefore a possible definition of fractional operators could be (and, in fact, it is in several contexts):
\begin{itemize}
\item Fractional derivative of order $\alpha$, $0<\alpha <1$, is the operator whose
Fourier transform, when acting on good enough functions, is given by $-|\xi|^{\alpha/2}\widehat{f}(\xi)$.
\item Fractional integral of order $\alpha$, $0<\alpha <1$, is the operator whose Fourier transform,
when acting on good enough functions, is given by $-|\xi|^{-\alpha/2}\widehat{f}(\xi)$.
\end{itemize}
In the 1950's and 1960's these Fourier transform considerations appeared to be rather important,
specially for the community working in the intertwining area of PDEs,
Harmonic Analysis and function (Sobolev) spaces. In fact some papers
related with these ideas can be considered today as part of the history of the subject.
We mention here the works by A. P. Calder\'on \cite{calderon},
E. M. Stein \cite{St1,St2} and E. M. Stein and A. Zygmund \cite{SZ}.

In the late 1960's a series of papers, see \cite{Heywood,Heywood2,kober,Okikiolu},
dealt with operators of fractional integral type and fractional derivative type.
The authors proved that for a certain range of $p$ the composition of the
fractional integral and a certain fractional derivative gives the identity in $L^p(\R)$. 

More recently, K. Andersen considered in \cite{A} the same kind of problem
but for functions $f$ belonging to some weighted Lebesgue space  $L^p(\R, w)$,
where $w$ is a weight in the $A_p$ Muckenhoupt class. 
He also studied the range of the fractional integral operators type 
when acting on functions in the weighted Lebesgue space.
Finally S. G. Samko, A. Kilbas and O. Marichev
have some very interesting discussions about these topics in their nice book \cite{Sa}.

When analyzing the papers cited above, it is not obvious for the reader why
the authors choose a particular definition of fractional derivative. 
Moreover, some non trivial constants
(related with the Gamma function and not always the same one) appear frequently.
Indeed, G. O. Okikiolu in \cite{Okikiolu}, H. Kober in \cite{kober}
and P. Heywood in \cite{Heywood} considered the following fractional integral
operator
\begin{equation}\label{kalpha}
k_\alpha f(x) =
\frac1{2 \Gamma(\alpha) \sin \frac{\pi \alpha}{2} } \int_{\mathbb{R}} \frac1{|x-y|^{1-\alpha}} f(y)\,dy.
\end{equation}
In a parallel order of ideas P. Heywood in \cite{Heywood, Heywood2} considered
\begin{equation}\label{kalphaH}
k_\alpha^H f
=\tfrac{1}{\pi}\Gamma(1-\alpha) \sin\tfrac{\pi\alpha}{2}\int_{\mathbb{R}} \frac1{|x-y|^{1-\alpha}} f(y)\,dy,
\end{equation}
and  
\begin{equation}\label{Halpha}
H_\alpha f(x) = \frac1{2\Gamma(\alpha) \sin \frac{\pi \alpha}{2} } \int_{\mathbb{R}}
\frac{f(t){\rm sign}(t-x)}{|t-x|^{1-\alpha}}\,dt
= \tfrac{1}{\pi}\Gamma(1-\alpha)\cos \tfrac{\pi \alpha}{2}\int_{\mathbb{R}}
\frac{f(t){\rm sign}(t-x)}{|t-x|^{1-\alpha}}\,dt.
\end{equation}
P. Heywood also defined in \cite{Heywood, Heywood2} the fractional derivative operators 
\begin{equation}\label{kmenosalpha}
k_{-\alpha}u(x)=\tfrac{1}{\pi}\Gamma(1+\alpha) \sin\tfrac{\pi \alpha}{2} 
\operatorname{P.V.}\int_{\mathbb{R}} \frac{u(t)-u(x) }{|t-x|^{1+\alpha}}\,dt,
\end{equation}
and
\begin{equation}\label{Hmenosalpha}
\begin{aligned}
H_{-\alpha} u(x) &= \tfrac{1}{\pi}\Gamma(1+\alpha) \cos \tfrac{\pi\alpha}{2}
\int_{0}^{\infty} \frac{u(x+t)-u(x-t)}{t^{1+\alpha}}\,dt \\
&= \tfrac{1}{\pi}\Gamma(1+\alpha) \cos \tfrac{\pi\alpha}{2} \int_{\mathbb{R}}
\frac{{\rm sgn}(t-x) u(t)}{|t-x|^{1+\alpha}}\,dt,
\end{aligned}
\end{equation}
where the integrals are understood as a principal value integral.
Obviously when dealing with boundedness of operators the constants are irrelevant,
but as far as we want to get solvability in the sense of
inversion results or a ``Fractional Fundamental Theorem of Calculus''
the constants play a fundamental role.

One of the aims of this note is to give a general and consistent approach
to the definition of fractional integrals and derivatives.
This will unify formulas \eqref{W} and \eqref{R}, together with those in \eqref{kalpha}--\eqref{Hmenosalpha}
and also with the definitions involving Fourier transform.
In our opinion the best machinery to clarify all these concepts is the semigroup language.
The key idea will be the application of the classical formulas
\begin{equation}\label{negacla}
\lambda^{-\alpha} = \frac1{\Gamma(\alpha)} \int_0^\infty e^{-t\lambda}\, \frac{dt}{t^{1-\alpha}}, \qquad  
\lambda^{\alpha} = \frac1{\Gamma(-\alpha)} \int_0^\infty( e^{-t\lambda}-1)\, \frac{dt}{t^{1+\alpha}},
\end{equation}
valid for $\lambda >0$, and their appropriate extensions
to complex parameters $i\lambda$, see Section \ref{powers}.  These formulas will 
allow us to define positive and negative powers of an operator $L$ by the expressions 
\begin{equation*}\label{negacla2}
L^{-\alpha} = \frac1{\Gamma(\alpha)} \int_0^\infty e^{-tL}\, \frac{dt}{t^{1-\alpha}}, \qquad  
L^{\alpha} = \frac1{\Gamma(-\alpha)} \int_0^\infty( e^{-tL}-1)\, \frac{dt}{t^{1+\alpha}},
\end{equation*}
where $e^{-tL}$ is the associated heat semigroup.  We will use these ideas to
define positive  and negative powers of the classical derivatives
and the Laplace operator  on the real line,  see \eqref{derivada2},  \eqref{derivada1}
and Remark  \ref{derivada7}.

Having enclosed the fractional derivatives and integrals into the frame of the semigroup language, 
we can take advantage of the method to highlight some properties of the fractional derivatives.

Next we present our main results.

The first two main statements are the maximum and comparison principles for fractional derivatives,
as well as uniqueness for the corresponding Dirichlet problem.
By $D_{\rm right}$ we denote the derivative \textit{from the right} at the point $x\in\R$, that is,
$$D_{\rm right}\varphi(x) = \lim_{t\rightarrow 0^+} \frac{\varphi(x)-\varphi(x+t)}{t},$$
for good enough functions $\varphi$. Observe that $D_{{\rm right}}$ equals the negative of the
lateral derivative $\frac{d}{dx^{+}}$ as usually defined in Calculus.
Our definition turns out to be the suited one when inverting the classical fractional integrals as
we will see. Then (see Subsection \ref{derivada1}),
$$(D_{\rm right})^\alpha \varphi(x) =  \frac1{\Gamma(-\alpha)}  \int_{x}^\infty
\frac{\varphi(t)-\varphi(x)}{(t-x)^{1+\alpha}}\,dt.$$

\begin{thm}[Maximum and comparison principles]\label{thm:A}
Let $\varphi $ be a function in the Schwartz class $\mathcal{S}$ such that
$\varphi(x_0) = 0$ for some $x_0$, and $\varphi(x)  \ge 0$, for $x\ge x_0$.
Then $(D_{\rm right})^\alpha\varphi(x_0) \le 0$. Moreover,
$(D_{\rm right})^\alpha\varphi(x_0) = 0$ if and only if $\varphi (x) = 0$ for all $x\ge x_0$.

Let $\varphi,\psi\in\mathcal{S}$ such that $\varphi(x_0) =  \psi(x_0) $ for some $x_0$,
and $\varphi(x) \ge \psi (x) $ for  $x \ge x_0$. Then 
$(D_{\rm right})^\alpha\varphi(x_0) \le (D_{\rm right})^\alpha\psi(x_0)$.
Moreover, $(D_{\rm right})^\alpha\varphi(x_0) = (D_{\rm right})^\alpha\psi(x_0)$
if and only if $\varphi(x) =  \psi(x) $ for all $x \ge x_0.$
\end{thm}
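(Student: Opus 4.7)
The plan is to reduce everything to a direct inspection of the defining integral, using only the sign of the constant $1/\Gamma(-\alpha)$ and the sign of the integrand.

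First, I would record the arithmetic observation that $\Gamma(-\alpha)<0$ for $0<\alpha<1$. This follows from the functional equation $\Gamma(-\alpha)=-\Gamma(1-\alpha)/\alpha$ together with $\Gamma(1-\alpha)>0$ on $(0,1)$. This sign is the whole engine of the theorem.

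Next, I would substitute $\varphi(x_{0})=0$ into the formula recalled just before the statement, obtaining
\[
(D_{\rm right})^{\alpha}\varphi(x_{0}) \;=\; \frac{1}{\Gamma(-\alpha)}\int_{x_{0}}^{\infty}\frac{\varphi(t)}{(t-x_{0})^{1+\alpha}}\,dt.
\]
The integral converges absolutely because $\varphi\in\mathcal{S}$ makes the integrand bounded near $x_{0}$ (via Taylor expansion, $\varphi(t)=O(t-x_{0})$ there, so the singularity is only of order $\alpha<1$) and decays fast at infinity. Since $\varphi\ge 0$ on $[x_{0},\infty)$, the integral is nonnegative, and multiplication by the negative constant $1/\Gamma(-\alpha)$ yields $(D_{\rm right})^{\alpha}\varphi(x_{0})\le 0$.

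For the equality case, the same display shows that $(D_{\rm right})^{\alpha}\varphi(x_{0})=0$ forces the integral to vanish. Since the integrand is nonnegative and $\varphi$ is continuous, this forces $\varphi(t)=0$ for every $t\ge x_{0}$; conversely, if $\varphi\equiv 0$ on $[x_{0},\infty)$, the integral is obviously $0$. Finally, the comparison principle is immediate by linearity: apply the first part to $\varphi-\psi\in\mathcal{S}$, which satisfies the hypotheses of the maximum principle at $x_{0}$, so $(D_{\rm right})^{\alpha}(\varphi-\psi)(x_{0})\le 0$, with equality iff $\varphi\equiv\psi$ on $[x_{0},\infty)$.

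There is no serious obstacle here; the only thing to be careful about is the sign of $\Gamma(-\alpha)$ (easy to get wrong by a sign flip) and the a priori integrability of the expression for $\varphi\in\mathcal{S}$. Everything else is linear and pointwise.
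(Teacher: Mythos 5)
Your proof is correct and follows essentially the same route as the paper: substitute $\varphi(x_0)=0$ into the integral formula, use that $1/\Gamma(-\alpha)<0$ to get nonpositivity, deduce the equality case from continuity and nonnegativity of the integrand, and obtain the comparison principle by applying the first part to $\varphi-\psi$. The only difference is that you spell out the sign of $\Gamma(-\alpha)$ and the absolute convergence, which the paper leaves implicit.
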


\begin{cor}[Dirichlet problem -- Uniqueness]\label{cor:1}
Let $a<b$ be two real numbers and $\varphi,\psi\in\mathcal{S}$.
\begin{enumerate}
\item Suppose that $\varphi$ solves
$$\begin{cases}
(D_{\rm right})^\alpha\varphi=f,&\hbox{in}~[a,b),\\
\varphi=0,&\hbox{in}~[b,\infty).
\end{cases}$$
If $f\geq0$ in $[a,b)$ then $\varphi\geq0$ in $[a,\infty)$.
\item If $(D_{\rm right})^\alpha\varphi\leq0$ in $[a,b)$ and $\varphi\leq0$ in $[b,\infty)$, then
$$\sup_{x>a}\varphi(x)=\sup_{x>b}\varphi(x).$$
\item If $(D_{\rm right})^\alpha\varphi\geq0$ in $[a,b)$ and $\varphi\geq0$ in $[b,\infty)$, then
$$\inf_{x>a}\varphi(x)=\inf_{x>b}\varphi(x).$$
\item If
$$\begin{cases}
(D_{\rm right})^\alpha\varphi\geq(D_{\rm right})^\alpha\psi,&\hbox{in}~[a,b),\\
\varphi\geq\psi,&\hbox{in}~[b,\infty),
\end{cases}$$
then $\varphi\geq\psi$ in $[a,\infty)$. In particular, we have uniqueness of the Dirichlet problem
\begin{equation}\label{Dirichlet problem}\begin{cases}
(D_{\rm right})^\alpha\varphi=f,&\hbox{in}~[a,b),\\
\varphi=g,&\hbox{in}~[b,\infty).
\end{cases}\end{equation}
\end{enumerate}
\end{cor}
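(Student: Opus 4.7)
The plan is to prove (2) first as the core maximum-principle estimate; parts (3), (4) and (1) will then follow by symmetry or linearity, and uniqueness for \eqref{Dirichlet problem} is an immediate consequence of (4).

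For (2), I argue by contradiction: assume $M:=\sup_{x>a}\varphi(x)$ is strictly greater than $\sup_{x>b}\varphi(x)$. Since $\varphi\in\mathcal{S}$ is continuous and decays at infinity, $M$ is attained. Because $\varphi\le 0$ on $[b,\infty)$ forces $\sup_{x>b}\varphi\le 0$ while $\varphi(t)\to 0$ at infinity forces $\sup_{x>b}\varphi\ge 0$, we have $\sup_{x>b}\varphi=0$ and consequently $M>0$. Hence the sup must be attained at some $x_0\in[a,b)$. Consider $\psi(x):=M-\varphi(x)$: it satisfies $\psi(x_0)=0$ and $\psi(x)\ge 0$ for all $x\ge x_0$. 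Since $(D_{\rm right})^\alpha$ annihilates constants, $(D_{\rm right})^\alpha\psi(x_0)=-(D_{\rm right})^\alpha\varphi(x_0)$, and the sign argument in Theorem \ref{thm:A} (which only uses absolute convergence of the defining integral and the positivity of $\psi-\psi(x_0)$ on $[x_0,\infty)$) applies to $\psi$ just as to a Schwartz function. Hence $(D_{\rm right})^\alpha\psi(x_0)\le 0$, i.e., $(D_{\rm right})^\alpha\varphi(x_0)\ge 0$. Combined with the hypothesis $(D_{\rm right})^\alpha\varphi(x_0)\le 0$, we get equality, and the rigidity part of Theorem \ref{thm:A} forces $\psi\equiv 0$ on $[x_0,\infty)$, i.e., $\varphi\equiv M$ on $[x_0,\infty)\supset[b,\infty)$. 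This contradicts $M>0$ together with $\varphi\le 0$ on $[b,\infty)$.

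Part (3) is the same argument applied to $-\varphi$ (working with the infimum $m$ and the auxiliary function $\varphi-m$). For (4), set $w:=\varphi-\psi$; by linearity of $(D_{\rm right})^\alpha$ we have $(D_{\rm right})^\alpha w\ge 0$ on $[a,b)$ and $w\ge 0$ on $[b,\infty)$, so (3) applied to $w$ yields $\inf_{x>a}w=\inf_{x>b}w\ge 0$, which is $\varphi\ge\psi$ on $[a,\infty)$. Part (1) is the special case $\psi\equiv 0$ of (4), and uniqueness for \eqref{Dirichlet problem} follows by applying (4) both to $(\varphi_1,\varphi_2)$ and to $(\varphi_2,\varphi_1)$ for any pair of solutions.

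The only subtlety worth flagging is that Theorem \ref{thm:A} is stated for Schwartz functions, whereas $\psi=M-\varphi$ is merely a Schwartz function plus a constant. This is harmless because the integral defining $(D_{\rm right})^\alpha\psi(x_0)$ converges absolutely (the constant cancels inside the numerator $\psi(t)-\psi(x_0)$), and both the inequality and the equality characterization in Theorem \ref{thm:A} are statements about the sign of that integrand, which extend verbatim. The only other point requiring a moment's care is the attainment of the supremum at a point strictly below $b$, and this is immediate from continuity of $\varphi$, decay at infinity, and the strict inequality $M>\sup_{x>b}\varphi$.
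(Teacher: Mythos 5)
Your proof is correct and follows essentially the same route as the paper's: locate an interior global extremum in $[a,b)$, apply the sign and rigidity parts of Theorem \ref{thm:A} there, and deduce the remaining items by passing to $-\varphi$ and $\varphi-\psi$. The only differences are organizational (you derive (1) from (4) instead of proving it directly) together with your explicit justification that Theorem \ref{thm:A} applies to the shifted function $M-\varphi$, a point the paper leaves implicit.
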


Notice that, because of the nonlocal one-sided nature of the operator $(D_{\rm right})^\alpha$,
\eqref{Dirichlet problem} is the correct formulation of the Dirichlet problem, that is,
the \textit{boundary} condition must be in fact a global condition \textit{to the right} of $b$.

The next main statement shows that the fractional derivatives on the line 
are Dirichlet-to-Neumann operators for an extension degenerate PDE problem
in $\mathbb{R}\times(0,\infty)$. We reach the most general result by taking
data $f$  in a weighted $L^p(w)$ space, where $w$ satisfies the one-sided version $A_p^+$ (see \eqref{A_p} and \cite{S}) of the familiar  $A_p$ condition of Muckenhoupt. 
As in the previous paragraph, the appearance of a one-sided condition
on the weight is natural due to the one-sided nature of the operators $D_{\rm right}$
and $(D_{\rm right})^\alpha$.

\begin{thm}[Extension problem]\label{thm:B}
Let  $f\in L^p(w)$, $w \in A_p^+,\, 1<p<\infty$, see \eqref{A_p}. Then the function
$$U(x,t):=\frac{t^{2\a}}{4^\a\Gamma(\a)}
\int_0^\infty e^{-t^2/(4s)}f(x+s)\,\frac{ds}{s^{1+\a}},\quad x\in\R,~t>0,$$
is a classical solution to the extension problem
$$\begin{cases}
-D_{\rm right}U+\frac{1-2\a}{t}\,U_t+U_{tt}=0,& \hbox{in}~\R\times(0,\infty), \\
\lim_{t\to0^+}U(x,t)=f(x), & \hbox{a.e. and in}~L^p(w).
\end{cases}$$
Moreover, for $c_\a :=\frac{4^{\a-1/2}\Gamma(\a)}{\Gamma(1-\a)}>0$,
$$-c_\a\lim_{t\to0^+}t^{1-2\a}U_t(x,t)=(D_{\rm right})^\alpha f(x),\quad\hbox{in the distributional sense}.$$
\end{thm}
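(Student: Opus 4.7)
The strategy is to recognize $U(x,t)$ as the Stinga--Torrea-type extension associated with the generator $L = D_{\rm right}$, whose one-parameter semigroup is the right translation $e^{-sL}f(x)=f(x+s)$. Making the change of variable $u=t^2/(4s)$, one rewrites
$$U(x,t)=\frac{1}{\Gamma(\alpha)}\int_0^\infty e^{-u}u^{\alpha-1}f(x+t^2/(4u))\,du,$$
exposing the integration kernel in $u$ as a probability density, since $\int_0^\infty e^{-u}u^{\alpha-1}\,du=\Gamma(\alpha)$. To see that $U(\cdot,t)\to f$ as $t\to0^+$ both almost everywhere and in $L^p(w)$, I would pointwise dominate $|U(x,t)|$ by the one-sided Hardy--Littlewood maximal function $M^+f(x)$; its boundedness on $L^p(w)$ for $w\in A_p^+$ due to Sawyer \cite{S}, combined with the trivial pointwise limit for Schwartz data, yields both modes of convergence by a standard density argument.

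To verify the extension PDE, differentiate the integral form of $U$ twice in $t$ and apply $D_{\rm right}$ under the integral in $x$, using $D_{\rm right,x}f(x+s)=-\partial_s f(x+s)$. An integration by parts in $s$ (the boundary terms vanish thanks to the exponential factor at $s=0^+$ and the decay at infinity) reduces the PDE to the explicit ODE identity satisfied by the kernel $G(t,s)=t^{2\alpha}(4^\alpha\Gamma(\alpha))^{-1}e^{-t^2/(4s)}s^{-1-\alpha}$, namely $(1-2\alpha)t^{-1}G_t+G_{tt}=-\partial_s G$.

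The Neumann-type identity is the technical core. Starting from the normalization $\Gamma(\alpha)^{-1}4^{-\alpha}t^{2\alpha}\int_0^\infty e^{-t^2/(4s)}s^{-1-\alpha}\,ds=1$, rewrite $U-f$ using the difference $f(x+s)-f(x)$; differentiating in $t$ produces a $(2\alpha/t)[U-f]$ term plus a second term that, after one integration by parts in $s$, cancels the first and yields the clean identity
$$t^{1-2\alpha}U_t(x,t)=\frac{2}{4^\alpha\Gamma(\alpha)}\int_0^\infty e^{-t^2/(4s)}\frac{f'(x+s)}{s^\alpha}\,ds.$$
Passing $t\to0^+$ by dominated convergence and integrating by parts once more to obtain $\int_0^\infty f'(x+s)s^{-\alpha}\,ds=\alpha\int_0^\infty [f(x+s)-f(x)]s^{-\alpha-1}\,ds$, and using $\Gamma(-\alpha)=-\Gamma(1-\alpha)/\alpha$, one arrives at $-c_\alpha\lim_{t\to0^+}t^{1-2\alpha}U_t=(D_{\rm right})^\alpha f$ with exactly the stated constant. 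The main obstacle is to justify all interchanges of limit, derivative and integral at the level $f\in L^p(w)$ rather than for Schwartz data; this is handled by first establishing the identity for Schwartz $f$ and then extending to $L^p(w)$ in the distributional sense by pairing against Schwartz test functions, applying Fubini, and invoking the $A_p^+$-boundedness of the relevant one-sided fractional operators.
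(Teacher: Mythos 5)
Your overall strategy is the same as the paper's: the change of variables $u=t^2/(4s)$ exposing the kernel as a probability density, the domination of $\sup_{t>0}|U(x,t)|$ by $M^+f$ together with Sawyer's $A_p^+$ characterization to get the a.e.\ and $L^p(w)$ boundary convergence, the reduction of the PDE to an ODE identity for the kernel, and the Neumann constant via $\int_0^\infty f'(x+s)s^{-\alpha}\,ds=\alpha\int_0^\infty[f(x+s)-f(x)]s^{-1-\alpha}\,ds$ and $\Gamma(-\alpha)=-\Gamma(1-\alpha)/\alpha$ are precisely the steps of the paper's Theorem 4.5. Two points need repair. The first is minor: with $G(t,s)=t^{2\alpha}(4^{\alpha}\Gamma(\alpha))^{-1}e^{-t^2/(4s)}s^{-1-\alpha}$ a direct computation gives $G_{tt}+\tfrac{1-2\alpha}{t}G_t=+\partial_sG$, not $-\partial_sG$; with your (correct) convention $D_{\mathrm{right},x}f(x+s)=-\partial_sf(x+s)$, it is the $+$ sign that makes $-D_{\mathrm{right}}U+\tfrac{1-2\alpha}{t}U_t+U_{tt}$ vanish, whereas your stated identity would produce the equation with the opposite sign on the $D_{\mathrm{right}}$ term.

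The second point is a genuine gap. Your verification of the PDE differentiates $f$: you put $D_{\mathrm{right},x}$ on $f(x+s)$ and then integrate by parts in $s$. For a general $f\in L^p(w)$ there is no $f'$, and the boundary term $G(t,s)f(x+s)$ at $s=\infty$ need not vanish for a mere $L^p(w)$ function. Since the theorem asserts that $U$ is a \emph{classical} solution for every $f\in L^p(w)$, your fallback of proving the identity for Schwartz $f$ and extending by duality does not cover this part (that device is appropriate only for the Neumann identity, which is stated distributionally). The repair is the one the paper uses: after substituting $r=x+s$ all the $x$-dependence sits in the kernel, $U(x,t)=\int_x^\infty K_t(r-x)f(r)\,dr$, so the difference quotient in $x$ falls on $K_t$ alone; passing the limit inside the integral is justified by a Mean Value Theorem bound of the type $|h^{-1}\Delta_hK_t(r-x)|\le C(t)\,(r-x-1/2)^{-2-\alpha}$ for $r>x+1$, $0<h<1/2$, combined with the local integrability of $w^{1-p'}$ and the tail estimate $\int_{a+N}^\infty w^{1-p'}(r)(r-a)^{-p'}\,dr<\infty$ valid for one-sided weights (Remarks 4.1 and 4.2 of the paper). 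These weighted integrability facts are what make $\int_x^\infty(r-x)^{-2-\alpha}|f(r)|\,dr$ finite, and they are absent from your outline; without them neither the differentiation under the integral sign nor even the absolute convergence of the differentiated integrals is justified.
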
      

This Theorem is new even for functions $f\in L^2(\mathbb{R})$. 
A parallel statement substituting $D_{\rm right}$ by $-\Delta$ and 
$w\in A_p^+$ by $w\in A_p$ can also be proved with the same kind 
of reasoning. This parallel result is well known for the case
$L^2(\mathbb{R})$ (see \cite{Caffarelli-Sil, Stinga-Torrea})
but is new in the weighted case of $L^p(\mathbb{R},w)$.

Next we turn to the ``Fractional Fundamental Theorem of Calculus", 
which can also be interpreted as an inversion result of the corresponding fractional integral.
In this order of ideas, 
let us describe here a small part of the paper by K. F. Andersen \cite{A}.
Consider the operator 
$$I_\alpha f(x) := \tfrac{1}{\pi}\Gamma(1-\alpha) \sin\tfrac{\pi \alpha}{2}
\int_{-\infty}^{\infty} \frac{f(t)}{|t-x|^{1-\alpha}}\,dt,\quad x \in \mathbb{R}.$$
Let $ w(x)$, $x \in \mathbb{R}$, be a nonnegative weight function in the class
$A_{p,q}$, $1<p< 1/\alpha$, $1/q = 1/p -\alpha$,  see \eqref{claseApq}. It is well known
that $I_\alpha$ applies $L^p(w^p)$ onto $L^q(w^q)$ if and only if $w\in A_{p,q}$, see
the paper by Muckenhoupt and Wheeden \cite{MW}.
The following statements are proved in \cite{A}.
\begin{itemize}
\item[(i)] If  $f\in L^p(w^p)$ then
\begin{equation*}\label{int1} f(x) = -\lim_{\delta\rightarrow 0^+}\tfrac{1}{\pi}\Gamma(1+\alpha)
\sin\tfrac{\pi \alpha}{2}\int_{|t-x|\ge \delta} \frac{I_\alpha f(t)-I_\alpha f(x)}{|t-x|^{1+\alpha}}\,dt,
\end{equation*}
pointwise almost everywhere and in the norm of $L^p(w^p)$.
\item[(ii)]Analogously, if $f\in L^p(w^p)$ then
\begin{equation}\label{int3}
f(x) = \lim_{\delta\rightarrow 0^+} \frac1{\Gamma(-\alpha)}
\int_{x+\delta}^\infty \frac{W_\alpha f(t)-W_\alpha f(x)}{(t-x)^{1+\alpha}}\,dt,
\end{equation}
pointwise almost everywhere and in the norm of $L^p(w^p).$
\end{itemize}
These conclusions suggest that the operator in \eqref{kmenosalpha} as well as 
the fractional operator $(D_{\mathrm{right}})^\alpha$, defined above as
\begin{equation}\label{int5}
(D_{\rm right})^\alpha u(x) =\lim_{\delta\rightarrow 0^+} \frac1{\Gamma(-\alpha)} 
\int_{x+\delta}^\infty \frac{u(t)-u(x)}{(t-x)^{1+\alpha}}\, dt,
\end{equation}
are appropriate definitions of fractional derivatives. 
The two results have an obvious parallel structure that should be clarified in a proper way.
Also the role of the different constants appearing in such a similar results should be understood.
Notice that a limit has to be taken in \eqref{int5} to account for functions $u$ that are not smooth.
As a consequence, the result of Andersen stated in \eqref{int3}
can be read as a kind of ``Fractional Fundamental Theorem of Calculus'':
$$ (D_{\rm right})^\alpha W_\alpha f = f,$$
valid almost everywhere and in the weighted
$L^p(w^p)$ norm for functions $f\in L^p(w^p)$.
This identity involves operators that have some  one-sided
behavior (one could say that the operators look only at the \textit{future} values 
of the function after the time point $x$).
However, Andersen's statement considers a class of functions which is ``blind''
for this lateral behavior. We improve Andersen's result by taking the more natural class of
nonnegative  lateral weights $A_{p,q}^+$, see \eqref{lateral} for the definition.
 We remind that $W_\alpha$ maps $L^p(w^p)$ onto $L^q(w^q)$ if and only if $w\in A_{p,q}^+$,
 see \cite{AS,MPT,MT}.
This class strictly contains the class considered by Andersen.
Moreover, these lateral weights will be sensible when considering
either $D_{\rm right } $  or   $D_{\rm left} $.  See Section \ref{Slateral} in which 
we shall prove the following two versions of the ``Fractional Fundamental Theorem of Calculus''.

\begin{thm}\label{thm:C}
Let $0<\alpha<1$,  $1<p< 1/\alpha$ and  $1/q=1/p-\alpha$. If $w\in A_{p,q}^+$ 
then for all $f\in L^p(w^p)$
$$f(x) = (D_{\rm right} )^\alpha (W_\alpha f)(x),$$
in the almost everywhere sense and in the $L^p(w^p)$-norm. 
\end{thm}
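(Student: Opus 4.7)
The guiding heuristic is the semigroup identity $L^{\alpha}L^{-\alpha}=\mathrm{Id}$, where $L=D_{\rm right}$. Since the paper has already developed (see the discussion around Remark \ref{derivada7} and Section \ref{powers}) the fact that $W_\alpha$ is the negative power $L^{-\alpha}$ and $(D_{\rm right})^\alpha$ is the positive power $L^{\alpha}$ via the formulas in \eqref{negacla}, the composition $(D_{\rm right})^\alpha W_\alpha$ should reduce to the identity. The plan is therefore to (i) prove the identity pointwise on a dense subclass of $L^p(w^p)$ for which the semigroup manipulations are rigorously justified, and then (ii) extend the identity to all of $L^p(w^p)$ by a maximal function control plus a density argument. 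The natural dense subclass is the Schwartz class $\mathcal{S}$, whose density in $L^p(w^p)$ follows from $w\in A_{p,q}^+\subset A_p^+$ (so $w^p$ is locally integrable and integrable weights allow standard approximation by smooth compactly supported functions).

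First, I would rewrite the truncated operator as
$$(D_{\rm right})^\alpha_\delta(W_\alpha f)(x) = \frac{1}{\Gamma(-\alpha)}\int_\delta^\infty \frac{W_\alpha f(x+t)-W_\alpha f(x)}{t^{1+\alpha}}\,dt,$$
and apply Fubini using the explicit kernel of $W_\alpha$. A careful exchange of integrals (justifiable for $f\in\mathcal{S}$) collapses the inner $t$-integral against the kernel $(x-\cdot)^{-1+\alpha}$ via the beta function identity
$$\int_\delta^\infty \bigl[(y-x-t)_+^{\alpha-1}-(y-x)_+^{\alpha-1}\bigr]\,\frac{dt}{t^{1+\alpha}}\longrightarrow \Gamma(\alpha)\Gamma(-\alpha)\,\delta_{y=x}\quad\text{(in a distributional sense as $\delta\to 0^+$)},$$
which together with the constant $\Gamma(\alpha)^{-1}$ from $W_\alpha$ and $\Gamma(-\alpha)^{-1}$ in front exactly produces $f(x)$ in the limit. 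For $f\in\mathcal S$ this limit is pointwise; this is the Fractional Fundamental Theorem of Calculus at the smooth level. A quick alternative justification is the equality $\lambda^\alpha\lambda^{-\alpha}=1$ applied through \eqref{negacla} and its complex extension for $\lambda=i\xi$, which reduces the identity to an elementary statement on the Fourier side for Schwartz data.

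Second, to pass from $\mathcal S$ to all of $L^p(w^p)$, I would introduce the maximal operator
$$(D_{\rm right})^{\alpha,*}(W_\alpha f)(x):=\sup_{\delta>0}\bigl|(D_{\rm right})^\alpha_\delta (W_\alpha f)(x)\bigr|,$$
and prove the pointwise domination
$$(D_{\rm right})^{\alpha,*}(W_\alpha f)(x)\le C\,M^+f(x),$$
where $M^+$ is the one-sided Hardy--Littlewood maximal operator. This follows by splitting the integrand inside $W_\alpha f(x+t)-W_\alpha f(x)$ into the ``close'' part (values of $f$ on $[x,x+2t]$, estimated by $M^+f(x)$ times a convergent power of $t$) and the ``far'' part (values of $f$ on $[x+2t,\infty)$, estimated by exploiting the cancellation and a telescoping of the kernels, again against $M^+f(x)$). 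Once this domination is established, the Sawyer theorem on $M^+$ gives $\|(D_{\rm right})^{\alpha,*}(W_\alpha f)\|_{L^p(w^p)}\lesssim \|f\|_{L^p(w^p)}$ because $w\in A_{p,q}^+$ implies $w^p\in A_p^+$. Combined with the pointwise identity on $\mathcal S$ and a standard $\varepsilon/3$ approximation argument, this yields both almost everywhere convergence and $L^p(w^p)$-norm convergence to $f$.

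\textbf{Main obstacle.} The substantive step is the maximal pointwise domination by $M^+f$. Because $W_\alpha$ is a \emph{one-sided} fractional integral, the usual two-sided estimates of Andersen \cite{A} have to be replaced by arguments that only use values of $f$ to the right of $x$; the telescoping that handles the ``far'' part must be performed carefully so that no two-sided maximal operator slips in, since the larger class $A_{p,q}^+$ does not control the symmetric maximal function. Checking this one-sided domination—together with the Fubini/beta-integral identity that produces the correct constant $\Gamma(\alpha)\Gamma(-\alpha)$ at the pointwise level on $\mathcal S$—is where the real work of the proof lies.
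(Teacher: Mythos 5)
Your proposal is correct in its essentials and rests on the same two pillars as the paper's argument: a pointwise domination of the truncated compositions by the one-sided maximal function $M^+f$, and the boundedness of $M^+$ on $L^p(w^p)$ coming from $w\in A_{p,q}^+\Rightarrow w^p\in A_p^+$. The organization, however, is different. You propose to prove the identity first on $\mathcal{S}$ (via Fourier transform or a beta-function computation) and then run an $\varepsilon/3$ density argument supported by a separately proved maximal estimate. The paper instead shows, for \emph{every} nonnegative $f\in L^p(w^p)$ and a.e.\ $x$, the exact identity $(D_{\rm right})_\varepsilon^\alpha(W_\alpha f)(x)=f\ast\tilde{k}_\varepsilon(x)$, where $\tilde{k}$ is an explicit kernel (given in \eqref{eq:k tilde}) that is nonnegative, increasing and supported in $(-\infty,0)$ with $\int\tilde{k}=1$ (Lemma \ref{11}); the general approximate-identity machinery of Subsection \ref{approx} then delivers your maximal domination and the a.e.\ and norm convergence in one stroke, with no separate treatment of Schwartz data. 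This packaging (following \cite{Sa} and \cite{L}) buys two things your outline does not: first, the ``telescoping of the far part'' that you correctly flag as the main obstacle is absorbed into the single computation of $\tilde{k}$, whose monotonicity and integrability are what make the one-sided domination work; second, the paper first proves the stronger Theorem \ref{Inversionlateral3}, where $w$ is only assumed to have $w^p$ and $w^{-p'}$ locally integrable and $W_\alpha f<\infty$ a.e., using the de Rosa--Segovia and Andersen--Sawyer characterizations (Lemmas \ref{segovialiliana} and \ref{andersensawyer}) to justify the Fubini interchange, and then Theorem \ref{thm:C} follows as an easy corollary. Your route would establish Theorem \ref{thm:C} but not Theorem \ref{thm:D}, since the $\varepsilon/3$ argument genuinely needs the $L^p(w^p)$-boundedness of $M^+$. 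Be aware also that the Fubini interchange you invoke must be justified not only for $f\in\mathcal{S}$ but for general $f\in L^p(w^p)$ in order to even write the truncated composition as a single integral against $f$; this is where the local integrability of $w^{-p'}$ and Lemma \ref{segovialiliana} enter in the paper, and it is a step your outline defers rather than resolves.
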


Furthermore, we prove a pointwise
inversion formula with a much weaker hypothesis.

\begin{thm}\label{thm:D}
Let $0<\alpha<1$,  $1<p<1/\a$ and  $1/q=1/p-\a$.
Let  $w$ be a nonnegative measurable function such that $w^p$ and $w^{-p^\prime}$
are locally integrable on $\R$. Assume that $W_\alpha f(x) <+\infty$ for almost every $x$,
for all $f\in L^p(w^p)$. Then
$$f(x)=(D_{\rm right} )^\alpha (W_\alpha f)(x),\quad\hbox{for almost every}~x\in\R.$$
\end{thm}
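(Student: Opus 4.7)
The plan is to fix a Lebesgue point $x_0$ of $f$ at which also $W_\alpha f(x_0) < \infty$ (this set has full measure since H\"older's inequality with $w^{-p'}$ locally integrable gives $f \in L^1_{\operatorname{loc}}(\R)$), and then split $f$ at $x_0$. By translation assume $x_0 = 0$, and since $W_\alpha f(0)$ and $(D_{\rm right})^\alpha W_\alpha f(0)$ depend only on values of $f$ on $[0,\infty)$, further take $f \equiv 0$ on $(-\infty,0)$. Choose any $R > 0$ and write $f = f_1 + f_2$ with $f_1 = f\chi_{[0,R]}$ and $f_2 = f\chi_{[R,\infty)}$. By linearity it suffices to show $(D_{\rm right})^\alpha W_\alpha f_1(0) = f(0)$ and $(D_{\rm right})^\alpha W_\alpha f_2(0) = 0$.

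\emph{Tail.} Fix $\delta \in (0,R)$. Because $W_\alpha f_2(t)$ involves only $f(s)$ with $s \geq R$ and the kernel $(s-t)^{\alpha-1}$ is bounded for $s \geq R$ and $t \in [\delta, s]$, Fubini's theorem produces
\[
\int_\delta^\infty \frac{W_\alpha f_2(t) - W_\alpha f_2(0)}{t^{1+\alpha}}\,dt = \frac{1}{\Gamma(\alpha)} \int_R^\infty \frac{f(s)}{s}\, G(\delta/s)\,ds,
\]
where, for $v \in (0,1]$,
\[
G(v) := \int_v^1 \frac{(1-u)^{\alpha-1} - 1}{u^{1+\alpha}}\,du - \frac{1}{\alpha}.
\]
The key computation is $G(0^+) = 0$, i.e.\ $\int_0^1 u^{-1-\alpha}((1-u)^{\alpha-1}-1)\,du = 1/\alpha$. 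This follows by analytically continuing the identity $\int_0^1 u^{a-1}((1-u)^{\alpha-1}-1)\,du = B(a,\alpha) - 1/a$, initially valid for $\operatorname{Re} a > 0$, to $a = -\alpha$, at which point $B(-\alpha,\alpha) = 0$ since $\Gamma(a+\alpha)$ has a simple pole. A Taylor expansion at $u = 0$ refines the conclusion to $|G(v)| \leq C v^{1-\alpha}$ on $(0,1]$, so $|G(\delta/s)|/s \leq C \delta^{1-\alpha}/s^{2-\alpha}$ on $s \geq R$. Since $W_\alpha f(0) < \infty$ forces $\int_R^\infty |f(s)|/s^{1-\alpha}\,ds < \infty$, dominated convergence yields $(D_{\rm right})^\alpha W_\alpha f_2(0) = 0$.

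\emph{Local.} The function $f_1$ is compactly supported and, by a second H\"older inequality on its support, belongs to $L^1(\R)$. The identity $(D_{\rm right})^\alpha W_\alpha f_1(0) = f_1(0) = f(0)$ at the Lebesgue point $0$ is then the classical Marchaud-type inversion of $W_\alpha$ on compactly supported integrable data; see, e.g., \cite{Sa}. Alternatively, one may carry out the Fubini rescaling parallel to the tail: after the substitution $s = \delta v$, the truncated integral becomes the convolution of $f$ against an explicit $L^1$ kernel $\Psi$ whose total mass equals $\Gamma(\alpha)\Gamma(-\alpha)$ (the value being forced by testing on a single Schwartz function, for which the identity holds by the semigroup construction of Section~\ref{powers}), and standard a.e.\ convergence at Lebesgue points completes the argument. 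Combined with the tail estimate, this gives $(D_{\rm right})^\alpha W_\alpha f(0) = f(0)$.

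The main difficulty is to legitimize the Fubini interchange and the passage to the limit $\delta \to 0^+$ under the weak weight hypothesis, which offers no global $L^q$-bound on $W_\alpha f$ and no global integrability of $f$. The splitting sidesteps this: on the tail, the kernel is uniformly bounded away from the singularity and the asymptotic vanishing $G(0^+) = 0$, a hidden cancellation equivalent to $\operatorname{FP} B(-\alpha,\alpha) = 0$, forces the limit to be $0$; on the local piece, compact support places the inversion in the classical integrable setting.
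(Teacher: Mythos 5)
Your argument is correct in its essentials, but it takes a genuinely different route from the paper's. The paper argues globally: it first uses the de Rosa--Segovia characterization of the hypothesis (Lemma \ref{segovialiliana}) together with the Andersen--Sawyer lemma (Lemma \ref{andersensawyer}) to secure $M^+f(x)<\infty$ a.e., then proves the single exact identity $(D_{\rm right})_\varepsilon^\alpha(W_\alpha f)(x)=f\ast\tilde{k}_\varepsilon(x)$ with $\tilde{k}$ an explicit monotone one-sided kernel of total mass one (Lemma \ref{11}), and concludes by the one-sided approximate-identity machinery of Subsection \ref{approx}. You instead localize at a fixed right Lebesgue point where $W_\alpha|f|(x_0)<\infty$, split $f$ into a compactly supported $L^1$ piece --- handled by the classical unweighted Marchaud inversion of \cite{Sa}, which is in fact the very lemma the paper's Claim is modelled on --- and a tail whose contribution is annihilated by the cancellation $G(0^+)=0$, i.e.\ the finite-part identity $\int_0^1 u^{-1-\alpha}\big((1-u)^{\alpha-1}-1\big)\,du=1/\alpha$. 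This is the same cancellation that makes the paper's $\tilde{k}$ integrable of mass $1$, consumed pointwise rather than packaged into a kernel. Your route is more elementary --- it bypasses Lemmas \ref{segovialiliana} and \ref{andersensawyer} entirely and uses the hypothesis only through $W_\alpha|f|(x_0)<\infty$ and $f\in L^1_{\mathrm{loc}}$ --- but it buys only the pointwise statement: the paper's convolution identity also yields the uniform domination $|(D_{\rm right})_\varepsilon^\alpha(W_\alpha f)|\le M^+f$, which is what drives the norm convergence in Theorem \ref{Inversionlateral2} and the range characterization in Theorem \ref{Rango}. Two small repairs are needed. First, the justification ``the kernel $(s-t)^{\alpha-1}$ is bounded for $t\in[\delta,s]$'' is false near $t=s$; the interchange in the tail should be run as Tonelli for $|f|$, where the inner integral $\int_\delta^s t^{-1-\alpha}(s-t)^{\alpha-1}\,dt=s^{-1}\int_{\delta/s}^1u^{-1-\alpha}(1-u)^{\alpha-1}\,du$ is finite and of order $\delta^{-\alpha}s^{\alpha-1}$, so absolute convergence follows from $\int_R^\infty|f(s)|s^{\alpha-1}\,ds<\infty$. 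Second, for signed $f$ the Lebesgue-point and finiteness conditions should be imposed on $|f|$, which the hypothesis permits since $|f|\in L^p(w^p)$.
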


Finally, we also present a characterization of the range in Theorem \ref{Inversionlateral3}.

 As we said in some lines above, we also pursue to unify the (apparently) different operators
found in the literature. In Section \ref{unified}  
we shall deal with the operators considered in the series of papers 
\cite{Heywood,Heywood2,kober,Okikiolu}.

After the account we just did on the historical aspects of fractional derivatives,
we need to mention here that these kinds of derivatives are being intensively studied due to their multiple applications
in real world phenomena. Indeed, models involving several types of fractional derivatives
arise in Physics, Biology, Financial Mathematics and Elasticity, among many other topics. 
A list of references would be long and it is out of the scope of this paper. We just mention
here the work \cite{Allen-Caffarelli-Vasseur} and the special issue \cite{JCP}, see also the references therein.

The reader will notice that we work mainly with $D_{\mathrm{right}}$, which gives a fractional
derivative that sees the function to the right of the point or \textit{into the future}. By considering $D_{\mathrm{left}}$ we obtain
a fractional derivative that considers the values of the function to the left of the point
or \textit{from the past}, see Section \ref{powers}. The latter is 
sometimes called the Caputo or Marchaud fractional derivative. All our results are equally valid for those operators.

The paper is organized as follows.
In Section \ref{powers} we establish the numerical formulas that allow to define powers of operators.
In the same section we use them to define fractional (one-sided and two-sided) derivatives and integrals.
The proofs of the maximum principles are contained in Section \ref{MAXIMUM}.
In Section \ref{EXTENSION}, by using some subordination formulas of Poisson type, 
we prove the extension result announced in Theorem
\ref{thm:B}. In Section \ref{unified}, we apply the ideas developed in Section \ref{powers}
to make a tour among the different kind of fractional operators that we found
in the literature and that we can reinterpret with our language.
The Fractional Fundamental Theorem of Calculus is considered
in Section \ref{Slateral}, in which we prove Theorems \ref{thm:C} and \ref{thm:D},
together with some inversion and range results for the fractional integral.

\section{Powers of operators}\label{powers}

We begin this section by recalling the following two formulas related with the Gamma function:
\begin{equation}\label{defi1}
\Gamma(\alpha) = \int_0^\infty e^{-t} t^\alpha \,\frac{dt}{t}, \quad  
\Gamma(-\alpha) = \int_0^\infty \big(e^{-t}-1\big)\,\frac{dt}{ t^{1+\alpha}}, \quad  0 < \alpha <1.
\end{equation}
These absolutely convergent integrals can be interpreted also as integrals
along the complex path $\{z=t:  0<t< \infty\}$.
By using the  Cauchy Integral Theorem we are able to prove the following.

\begin{lem}\label{lem:gamma function}
Let $0<\a<1$ and $-\pi/2\leq \varphi_0\leq\pi/2$. Consider the ray
in the complex plane $\mathrm{ray}_{\varphi_0}:=\{z=re^{i\varphi_0}:0<r<\infty\}$. Then
\begin{equation}\label{negatcomplex}
\Gamma (\alpha) = \int_{ {\rm ray}_{\varphi_0} } e^{-z} z^\alpha \,\frac{dz}{z},\quad  \hbox{and} \quad 
\Gamma(-\alpha) = \int_{{\rm ray}_{\varphi_0}} ( e^{-z}-1)\,\frac{dz}{z^{1+\alpha}}.
\end{equation}
\end{lem}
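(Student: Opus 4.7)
The plan is to prove both identities simultaneously by Cauchy's Theorem, using a pie-slice (keyhole) contour between the positive real axis and the ray $\mathrm{ray}_{\varphi_0}$. Fix $\varphi_0\in[-\pi/2,\pi/2]$; without loss of generality assume $\varphi_0\geq 0$ (the case $\varphi_0<0$ is identical). For $0<\epsilon<R$, let $\Gamma_{\epsilon,R}$ be the positively oriented closed contour consisting of the real segment from $\epsilon$ to $R$, the outer arc $\{Re^{i\varphi}:0\leq\varphi\leq\varphi_0\}$, the reversed ray from $Re^{i\varphi_0}$ down to $\epsilon e^{i\varphi_0}$, and the inner arc $\{\epsilon e^{i\varphi}:0\leq\varphi\leq\varphi_0\}$ traversed backwards. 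Using the principal branch of $z^\alpha$ on $\C\setminus(-\infty,0]$, both integrands $e^{-z}z^{\alpha-1}$ and $(e^{-z}-1)z^{-\alpha-1}$ are holomorphic in the closed sector minus the origin, so Cauchy gives $\oint_{\Gamma_{\epsilon,R}}=0$ for each. It then suffices to show that the inner and outer arcs contribute nothing in the limit $\epsilon\to 0^+$, $R\to\infty$, which by \eqref{defi1} will yield \eqref{negatcomplex}.

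For the inner arc, on $|z|=\epsilon$ I will use $|e^{-z}|\leq e^{\epsilon}$ and $|e^{-z}-1|\leq \epsilon e^{\epsilon}$. The first integrand is then bounded by $e^{\epsilon}\epsilon^{\alpha-1}$ and, after multiplying by the arc length $\leq \pi\epsilon/2$, gives $O(\epsilon^\alpha)\to 0$ since $\alpha>0$. The second integrand is bounded by $e^{\epsilon}\epsilon\cdot\epsilon^{-\alpha-1}$, and the arc contribution is $O(\epsilon^{1-\alpha})\to 0$ since $\alpha<1$. So both inner arcs vanish.

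The outer arc is the main obstacle, and is delicate precisely at the boundary $\varphi_0=\pm\pi/2$, where $\cos\varphi_0=0$ and $e^{-z}$ does not decay on the ray itself. For $|\varphi_0|<\pi/2$ one has $\cos\varphi\geq\cos\varphi_0>0$ on the arc, and the integrals are bounded by $C\,R^\alpha e^{-R\cos\varphi_0}$ and $C\,R^{-\alpha}e^{-R\cos\varphi_0}$ respectively, which tend to zero. For $\varphi_0=\pi/2$ I will use the Jordan-type bound $\cos\varphi\geq 1-2\varphi/\pi$ on $[0,\pi/2]$. For the first integrand this gives
\[
\Big|\int_0^{\pi/2}e^{-Re^{i\varphi}}(Re^{i\varphi})^{\alpha-1}iRe^{i\varphi}\,d\varphi\Big|
\leq R^{\alpha}\int_0^{\pi/2}e^{-R(1-2\varphi/\pi)}\,d\varphi=\tfrac{\pi}{2}R^{\alpha-1}(1-e^{-R}),
\]
which tends to zero as $R\to\infty$ since $\alpha<1$. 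For the second integrand I split $e^{-z}-1$ and estimate $|e^{-z}-1|\leq e^{-R\cos\varphi}+1$, so the arc contribution is bounded by $R^{-\alpha}\int_0^{\pi/2}(e^{-R\cos\varphi}+1)\,d\varphi\leq \pi R^{-\alpha}\to 0$.

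Putting the four estimates together, Cauchy's Theorem forces the integrals over $[\epsilon,R]$ and over the ray segment $\{re^{i\varphi_0}:\epsilon\leq r\leq R\}$ (oriented outward) to coincide in the limit, yielding \eqref{negatcomplex} in view of \eqref{defi1}. The only genuinely nontrivial step is the Jordan-type estimate that handles the endpoint case $\varphi_0=\pm\pi/2$, where the integrand along the ray is only conditionally integrable; once that is in place the proof is a standard contour deformation.
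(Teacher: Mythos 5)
Your proof is correct and follows essentially the same route as the paper: a pie-slice contour between the positive real axis and $\mathrm{ray}_{\varphi_0}$, Cauchy's theorem, the estimates $O(\varepsilon^{\alpha})$ and $O(\varepsilon^{1-\alpha})$ on the inner arc, the Jordan-type bound $\cos\varphi\geq 1-2\varphi/\pi$ for the outer arc of $e^{-z}z^{\alpha-1}$, and the trivial bound $|e^{-z}-1|\leq 2$ for the outer arc of the second integrand. The only cosmetic difference is that you treat $|\varphi_0|<\pi/2$ and $\varphi_0=\pm\pi/2$ separately, whereas the paper's single Jordan estimate covers both cases at once.
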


\begin{proof}
We do the computation for the case $0<\varphi_0\leq\pi/2$. The other case is completely analogous.
Notice that $F(z)=e^{-z}z^{\a-1}$ is holomorphic for $z\neq0$. Let $0<\varepsilon<R$.
Consider an angular sector $\mathcal{C}$ in the first quadrant of the complex plane $\Re(z),\Im(z)>0$ 
of aperture $\varphi_0$, but truncated at $|z|=\varepsilon$
and $|z|=R$. The boundary of $\mathcal{C}$ is oriented counterclockwise
and is given by the union of the following paths:
$\gamma_1=\{z=t:\varepsilon\leq t\leq R\}$,
$\gamma_2=\{z=Re^{i\varphi}:0\leq\varphi\leq\varphi_0\}$, $\gamma_3=\{z=te^{i\varphi_0}:\varepsilon\leq t\leq R\}$,
and $\gamma_4=\{z=\varepsilon e^{i\varphi}:0\leq\varphi\leq\varphi_0\}$. By the Cauchy Theorem,
$\int_{\mathcal{C}}F(z)\,dz=0$. We first notice that
\begin{align*}
	\bigg|\int_{\gamma_4}F(z)\,dz\bigg| &= \bigg|-\int_0^{\varphi_0}e^{-\varepsilon e^{i\theta}}\varepsilon^\alpha
	e^{i\a\theta}\,\frac{d(e^{i\theta})}{e^{i\theta}}\bigg| \\
	&\leq \varepsilon^\alpha\int_0^{\pi/2}e^{-\varepsilon\cos\theta}\,d\theta\leq \varepsilon^\a\pi/2\to0,
\end{align*}
as $\varepsilon\to0$. Similarly, but using now that $\cos\theta\geq1-(2\theta)/\pi$
whenever $0\leq\theta\leq\pi/2$,
\begin{align*}
	\bigg|\int_{\gamma_2}F(z)\,dz\bigg| 
	&\leq R^\alpha\int_0^{\pi/2}e^{-R\cos\theta}\,d\theta 
	\leq R^\alpha e^{-R}\int_0^{\pi/2}e^{(2R/\pi)\theta}\,d\theta\leq CR^{\alpha-1}\to0,
\end{align*}
as $R\to\infty$. Thus, after taking limits, we get that the first two integrals in \eqref{defi1}
and \eqref{negatcomplex} coincide. 

Let us consider the function $G(z)=(e^{-z}-1)/z^{1+\a}$, which is holomorphic in $\mathcal{C}$.
By the Mean Value Theorem,
\begin{align*}
	\bigg|\int_{\gamma_4}G(z)\,dz\bigg| &\leq \varepsilon^{-\a}\int_0^{\varphi_0}|e^{-\varepsilon e^{i\theta}}-1|
	\,d\theta\leq \varphi_0\varepsilon^{1-\a}\to0,
\end{align*}
as $\varepsilon\to0$. Also,
\begin{align*}
	\bigg|\int_{\gamma_2}G(z)\,dz\bigg| &\leq R^{-\a}\int_0^{\varphi_0}|e^{-\varepsilon e^{i\theta}}-1|
	\,d\theta\leq 2\varphi_0R^{-\a}\to0,
\end{align*}
as $R\to\infty$. By using the Cauchy Integral Theorem and taking the limits as
$\varepsilon\to0$ and $R\to\infty$ we get the equality
between the second integrals in \eqref{defi1} and \eqref{negatcomplex}.
\end{proof}

\begin{cor}\label{formulas} Let $0<\alpha <1$ and $\lambda \neq 0$.  Then
\begin{equation}\label{nega}
(i\lambda)^{-\alpha} = \frac1{\Gamma(\alpha)} \int_0^\infty e^{-i\lambda t }t^{\alpha}\,\frac{dt}{t},\quad
\hbox{and}\quad(i\lambda)^{\alpha} = \frac1{\Gamma(-\alpha)} \int_0^\infty
(e^{-i\lambda t}-1)\,\frac{dt}{t^{1+\alpha}}.\end{equation}
\end{cor}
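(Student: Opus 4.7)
The plan is to obtain Corollary \ref{formulas} directly from Lemma \ref{lem:gamma function} by a linear change of variables that rotates the contour onto the ray $\{i\lambda t:t>0\}$. Concretely, I would treat the two cases $\lambda>0$ and $\lambda<0$ separately, choosing $\varphi_0=\pi/2$ or $\varphi_0=-\pi/2$, respectively, so that the ray $\mathrm{ray}_{\varphi_0}$ is parametrized by $z=i\lambda t$, $t\in(0,\infty)$. Here $(i\lambda)^{\pm\alpha}$ is understood via the principal branch of the logarithm on $\mathbb{C}\setminus(-\infty,0]$, so that $(i\lambda t)^{\pm\alpha}=(i\lambda)^{\pm\alpha}t^{\pm\alpha}$ for $t>0$.

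For the first identity, starting from
$$\Gamma(\alpha)=\int_{\mathrm{ray}_{\varphi_0}}e^{-z}z^{\alpha}\,\frac{dz}{z},$$
I would substitute $z=i\lambda t$, $dz/z=dt/t$, $z^{\alpha}=(i\lambda)^{\alpha}t^{\alpha}$, to obtain
$$\Gamma(\alpha)=(i\lambda)^{\alpha}\int_{0}^{\infty}e^{-i\lambda t}t^{\alpha}\,\frac{dt}{t},$$
and then divide by $(i\lambda)^{\alpha}\Gamma(\alpha)$. For the second identity, starting from
$$\Gamma(-\alpha)=\int_{\mathrm{ray}_{\varphi_0}}(e^{-z}-1)\,\frac{dz}{z^{1+\alpha}},$$
the same substitution gives $dz/z^{1+\alpha}=(i\lambda)^{-\alpha}\,dt/t^{1+\alpha}$, hence
$$\Gamma(-\alpha)=(i\lambda)^{-\alpha}\int_{0}^{\infty}(e^{-i\lambda t}-1)\,\frac{dt}{t^{1+\alpha}},$$
which rearranges to the claimed formula.

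The only delicate point is justifying that the substitution produces the correct branch; this reduces to checking that with $\varphi_{0}=\operatorname{sgn}(\lambda)\pi/2$ one has $i\lambda=|\lambda|e^{i\varphi_{0}}$, so that $(i\lambda t)^{\alpha}=|\lambda|^{\alpha}t^{\alpha}e^{i\alpha\varphi_{0}}=(i\lambda)^{\alpha}t^{\alpha}$ for $t>0$ under the principal branch. Absolute convergence of the resulting real-variable integrals is immediate from that of the contour integrals in Lemma \ref{lem:gamma function}, so no additional limiting argument is needed. In short, Corollary \ref{formulas} is a one-line consequence of the lemma once the change of variables and the branch choice are set up carefully.
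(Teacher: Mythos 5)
Your derivation is exactly the intended one: the paper states the corollary immediately after Lemma \ref{lem:gamma function} precisely because the substitution $z=i\lambda t$ with $\varphi_0=\operatorname{sgn}(\lambda)\pi/2$ turns \eqref{negatcomplex} into \eqref{nega}, and your branch bookkeeping $(i\lambda t)^{\pm\alpha}=(i\lambda)^{\pm\alpha}t^{\pm\alpha}$ is correct. One small correction: the first integral is \emph{not} absolutely convergent, since $|e^{-i\lambda t}t^{\alpha-1}|=t^{\alpha-1}$ is not integrable at infinity for $0<\alpha<1$ (the same is true of the contour integral along $\mathrm{ray}_{\pm\pi/2}$, where $|e^{-z}|=1$); it must be read as the improper limit $\lim_{R\to\infty}\int_0^R$, which is exactly what the proof of the lemma furnishes at $\varphi_0=\pm\pi/2$ via the vanishing of the arc term $\int_{\gamma_2}$. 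The second integral is genuinely absolutely convergent, so your remark is accurate there.
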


Needless to say, the identities in \eqref{negacla} follow from \eqref{defi1}.
We will use \eqref{negacla} and \eqref{nega} to
define the negative and positive fractional powers of the operators we are going to work with.

For further reference we list here the following well known identities for the Gamma function:
\begin{equation} \label{gamma1}
\Gamma(z) = \frac{\Gamma(\frac{z}{2}) \Gamma( \frac{z}{2} +\frac1{2}) } {2^{1- z} \sqrt{\pi}}, \quad 
\Gamma(1-z) \Gamma(z) = \frac{\pi} {\sin(\pi z)}.
\end{equation}

In the rest of the section we shall present the definitions of positive and negative powers
of the lateral derivatives on the line.

\subsection{Negative powers of first order derivatives on $\R$}

Let us begin with the following

\begin{defn} 
We call  {\bf derivative from the right} of the function $u$ at the point $x$ to the limit:
\begin{equation*}
D_{{\rm right}}u(x) = \lim_{t\rightarrow 0^+} \frac{u(x)-u(x+t)}{t},  
\end{equation*}
whenever it exists.
Analogously, the {\bf derivative from the left} of the function $u$ in the point $x$ is
\begin{equation*} D_{{\rm left}}u(x) = \lim_{t\rightarrow 0^+}
 \frac{u(x)- u(x-t)}{t}\, .  
\end{equation*}
\end{defn}

\begin{rem} 
{\it  For good enough functions we have the expected identities
\begin{equation*}\label{Fderivada}
\widehat{D_{{\rm right}} u} (\xi) = - i \xi \widehat{u}(\xi) \quad
\hbox{and}\quad\widehat{D_{{\rm left}} u} (\xi) =  i \xi \widehat{u}(\xi),
\end{equation*}
where by $\widehat{u}(\xi)$ we denote the Fourier transform of the function $u$, that is,
$$ \widehat{u}(\xi) = \frac{1}{(2\pi)^{1/2}}\int_{\R}u(x)e^{-ix\xi} \,dx,\quad \xi\in\R.$$}
\end{rem}

Hence, from \eqref{Fderivada} and Corollary \ref{formulas}, for good enough functions $f$ we can write
\begin{equation*}\label{derivada2}
\begin{aligned}
(D_{{\rm right}})^{-\alpha}f(x)&= \frac{1}{\Gamma(\alpha)} \int_0^\infty   f(x+t)t^\alpha\, \frac{dt}{t}=W_\alpha f(x);\\
(D_{{\rm left}})^{-\alpha}f (x) &= \frac{1}{\Gamma(\alpha)} \int_0^\infty   f (x-t) t^\alpha \,\frac{dt}{t}= R_\alpha f(x) .
\end{aligned}
\end{equation*}
It is clear from the Fourier transform definition that these operators are not bounded in $L^2(\R)$.

\subsection{Positive powers of first order derivatives on $\R$}

Parallel to the case of negative powers, we can 
use the Fourier transform identities \eqref{Fderivada} together
with Corollary \ref{formulas} to get the identities
\begin{equation*}
\begin{aligned}\label{derivada1}
(D_{{\rm right}})^{\alpha}u(x)&=\frac{1}{\Gamma(-\alpha)} \int_0^\infty\frac{ u(x+t)-u(x)}{t^{\alpha+1}}\, dt = 
 \frac{1}{\Gamma(-\alpha)} \int_x^\infty  \frac{u(t)-u(x)}{(t-x)^{\alpha +1}} \,dt; \\
 (D_{{\rm left}})^{\alpha}u(x) &= \frac{1}{\Gamma(-\alpha)} \int_0^\infty  \frac{u(x-t)-u(x)}{t^{\alpha +1}}\,dt
=\frac{1}{\Gamma(-\alpha)} \int_{-\infty}^x\frac{ u(t)-u(x)}{(x-t)^{\alpha+1}}\,dt,
\end{aligned}
\end{equation*}
valid for sufficiently smooth functions. Notice from the Fourier transform identities
that these operators do not preserve the Schwartz class $\mathcal{S}$, see Remark \ref{rem:Silvestre}.

\begin{rem}\label{rem:lateral}
{\it Observe that the local operator $D_{{\rm right}}$, when acting on a function
$u$ at a point $x$, depends only on the values of $u$ in an
arbitrarily small neighborhood to the \textit{right} of the point $x$. However the 
nonlocal operators $(D_{{\rm right}})^{\alpha}$ and $(D_{{\rm right}})^{-\alpha}$
depend on the values of $u$ on the whole half line $(x,\infty)$.
As we mentioned in the Introduction,
a parallel comment can be made about the powers of the operator $D_{{\rm left}}.$}
\end{rem}

\begin{rem}\label{rem:Silvestre}
{\it Since $0<\alpha<1$, it is obvious that the integrals in 
\eqref{derivada1} are absolutely convergent for functions
in the Schwartz class $\mathcal{S}$. On the other hand, it is clear that for $\varphi,\psi \in \mathcal{S}$,
\begin{align*}
\Gamma(-\a) \int_{\mathbb{R}} (D_{{\rm right}})^{\alpha}\varphi(x) \psi(x)\,dx
&=\lim_{\varepsilon\to0^+}\int_\varepsilon^\infty\int_{\mathbb{R}}  
\frac{\varphi(x+t)-\varphi(x)}{t^{1+\alpha}}\,\psi(x)\,dx\,dt \\
&= \lim_{\varepsilon \rightarrow 0^+} \Bigg(\int_\varepsilon^\infty \int_{\mathbb{R}}
\frac{\varphi(x+t)}{t^{1+\alpha}}\,\psi(x) \,dx\, dt -
\int_\varepsilon^\infty \int_{\mathbb{R}}\frac{\varphi(x)}{t^{1+\alpha}}\, \psi(x)\,dx\,dt \Bigg) \\
&=\lim_{\varepsilon \rightarrow 0} \Bigg( \int_\varepsilon^\infty \int_{\mathbb{R}}
\frac{\varphi(y)}{t^{1+\alpha}}\,\psi(y-t)\, dy\, dt - \int_\varepsilon^\infty
\int_{\mathbb{R}}\frac{\varphi(x)}{t^{1+\alpha}}\, \psi(x)\,dx\,dt \Bigg) \\
&=\lim_{\varepsilon \rightarrow 0} \int_\varepsilon^\infty \int_{\mathbb{R}}
\frac{\varphi(y)}{t^{1+\alpha}}\,\big(\psi(y-t)-\psi(y)\big)\,dy\,dt \\
&={\Gamma(-\a)}\int_{\mathbb{R}}\varphi(x)  (D_{{\rm left}})^{\alpha}\psi(x)\,dx.
\end{align*}
It is easy to check that for $\varphi\in \mathcal{S} $, $(D_{{\rm right }})^{\alpha} \varphi \in\mathcal{S}_\alpha $, where
$$\mathcal{S}_\alpha:=\bigg\{f\in C^{\infty}(\R):(1+|x|^{1+\a})f^k(x)\in L^\infty(\R),~
\hbox{for each}~k\geq0\bigg\}.$$
The topology in $\mathcal{S}_\a$ is given by the family of seminorms
$[f]_k:=\sup_{x\in\R}\big|(1+|x|^{1+\alpha})f^{(k)}(x)\big|$, for $k\geq0$.
Let $\mathcal{S}_\alpha'$ be the dual space of $\mathcal{S}_\alpha$.
Then the symmetry showed above allows us to extend the definitions of $(D_{{\rm right }})^{\alpha}$
and $(D_{{\rm left }})^{\alpha}$ to the space $\mathcal{S}_\alpha'$ by duality. Namely,
if $h\in\mathcal{S}_\a'$ then $(D_{{\rm right}})^\alpha h$ is the tempered distribution given by
$$\langle (D_{{\rm right}})^\alpha h,\varphi\rangle=\langle
h,(D_{{\rm left}})^\alpha\varphi\rangle,\quad\hbox{for every}~\varphi\in\mathcal{S}.$$
Moreover, $(D_{{\rm right}})^\alpha$ is a continuous operator from $\mathcal{S}_\a'$
into $\mathcal{S}'$.
In particular, the operators will be defined in the space of functions
$$L_\alpha:=\bigg\{h:\R\to\R:\|h\|_{L_\alpha}:=\int_{\R}\frac{|h(x)|}{1+|x|^{1+\alpha}}\,dx< \infty\bigg\}.$$
See \cite{silvestre} for a parallel reasoning in the case of
the fractional Laplacian $(-\Delta)^\alpha$ on $\R^n$.}
\end{rem}

\section{Maximum principles for fractional one-sided derivatives}\label{MAXIMUM}

In this section we prove Theorem \ref{thm:A} and Corollary \ref{cor:1}.

\begin{proof}[Proof of Theorem \ref{thm:A}]
We observe that, under the hypotheses listed in the statement,
$$(D_{\rm right})^\alpha \varphi(x_0)=
\frac1{\Gamma(-\alpha)}\int_0^\infty \frac{\varphi(x_0+t)}{t^{1+\alpha}} \,dt,$$
which is obviously nonpositive.
Moreover, if $(D_{\rm right})^\alpha \varphi(x_0) =0$
then $\varphi(x_0+t)$ must be zero for every $t>0$. The comparison principle follows
by considering $\varphi-\psi$.
\end{proof}

\begin{proof}[Proof of Corollary \ref{cor:1}]
For (1), by contradiction, suppose that there
is a point $x_0\in[a,b)$ where $\varphi$ attaines a global negative minimum.
Then by Theorem \ref{thm:A}
we have $(D_{\rm right})^\alpha\varphi(x_0)\leq0$. Now we have two cases.
If $(D_{\rm right})^\alpha\varphi(x_0)=0$
then necessarily $\varphi(x)=\varphi(x_0)<0$ for all $x\geq x_0$, contradicting the hypothesis that
$\varphi(x)=0$ for $x\geq b$. The case $(D_{\rm right})^\alpha\varphi(x_0)<0$
also leads to a contradiction because, by assumption, $f(x_0)\geq0$.
Therefore $\varphi\geq0$ in $[a,\infty)$.

Notice that (3) follows from (2) by considering $-\varphi$.
For (2), again by contradiction, suppose that the $\sup_{x>a}\varphi(x)$
is not attained in $[b,\infty)$. Then there exists $x_0\in[a,b)$ such that $\varphi(x_0)$
is a global maximum of $\varphi$ on $[a,\infty)$. Hence, by
Theorem \ref{thm:A}, $(D_{\rm right})^\alpha\varphi(x_0)\geq0$.
The case $(D_{\rm right})^\alpha\varphi(x_0)>0$ contradicts the fact that $(D_{\rm right})^\alpha\varphi\leq0$
in $[a,b)$. Now if $(D_{\rm right})^\alpha\varphi(x_0)=0$ then,
by Theorem \ref{thm:A}, $\varphi(x)=\varphi(x_0)$ for all $x\geq x_0$,
so that the $\sup_{x>a}\varphi(x)$ is attained in $[b,\infty)$, again a contradiction with our
initial assumption. Hence the $\sup_{x>a}\varphi(x)$ is attained in $[b,\infty)$.

Finally (4) is a consequence of (3) applied to $\varphi-\psi$. The uniqueness is immediate.
\end{proof}

\section{Extension problem for fractional one-sided derivatives}\label{EXTENSION}

In order to prove Theorem \ref{thm:B}, we need some preparation and notation.
Fix $0<\a<1$.  Given a semigroup $\{T_t\}_{t\geq0}$ acting on real functions,
the \textit{generalized Poisson integral} of $f$ is given by
\begin{equation}\label{gen Poisson}
P_t^\a f(x)=\frac{t^{2\alpha}}{4^\alpha\Gamma(\alpha)}
\int_0^\infty e^{-t^2/(4s)}T_sf(x)\,\frac{ds}{s^{1+\alpha}},\quad x\in\R,
\end{equation}
see \cite[(1.9)]{Stinga-Torrea}. In the case $\a=1/2$, $P_t^{1/2}f$
is the Bochner subordinated Poisson semigroup of $T_t$, see \cite[Chapter~II,~Section~2]{Stein}.
For this special case we write $P_tf\equiv P_t^{1/2}f$.

If we consider the semigroup of translations $T_sf(x) = f(x+s)$, $s\geq0$, then
$P_t^\a f(x)=f\ast k_\varepsilon(x):=\int_\R
f(s)k_\varepsilon(x-s)\,ds$, with 
$$k(x):=\frac{e^{-1/(4(-x))}}{4^\alpha\Gamma(\alpha)(-x)^{1+\alpha}}
\chi_{(-\infty,0)}(x),$$
 $k_\varepsilon(x)=\frac1\varepsilon k(x/\varepsilon)$ and  $\varepsilon=t^2$.
 Since $k$ is increasing and integrable in $(-\infty, 0)$, it is well known that 
 $$k^*f(x)=\sup_{\varepsilon>0}|f|\ast k_\varepsilon(x)=\int_\R
|f(t)|k_\varepsilon(x-t)\,dt,$$
is pointwise controlled by the usual Hardy Littlewood maximal operator. However, since the support of $k$ is 
$(-\infty, 0)$, a sharper control can be obtained by using the  one-sided Hardy-Littlewood maximal operator. This control and the behavior of $k^*$ in weighted $L^p$-spaces will be used in the results of this paper. In the next subsection, we revise briefly some of the results we shall use.

\subsection{Approximations of the identity and lateral weights}\label{approx}
Let $k$ be any nonnegative integrable function with support in
$(-\infty, 0)$, increasing in $(-\infty, 0)$. Define
$k_\varepsilon$, $\varepsilon>0$, and 
$k^*f$ as before
when $f$ is locally integrable. Then (see \cite{L})
\begin{equation}\label{identity2}
k^*f(x)\leq\left( \int_\R k\right)M^+f(x),\quad\hbox{for a.e.}~x\in\R,
\end{equation}
where the one-sided Hardy-Littlewood maximal function $M^+$ is defined as
$$M^+f(x)=\sup_{h>0}\frac{1}{h}\int_x^{x+h}|f(t)|\,dt.$$
If $\omega$ is a nonnegative measurable function on $\R$ and $1<s<\infty$ then
$M^+:L^s(\omega)\to L^s(\omega)$ is bounded if and only if $\omega$ satisfies the one-sided
Muckenhoupt $A_{s}^+$ condition (see \cite{S}); that is, there exists  $C>0$ such that
\begin{equation}\label{A_p}
\left(\frac{1}{h} \int_{a-h}^a \omega  \right)^{1/s}\left(\frac{1}{h} \int^{a+h}_a \omega^{1-s'}  \right)^{1/s'} \leq C,
\end{equation}
for all real numbers $a$ and all $h>0$, where $s+s^\prime=ss^\prime$. It is clear that for every $a\in\R$,   $\omega(x)\in A_{s}^+\Leftrightarrow \omega(x+a)\in A_{s}^+$. Changing the orientation of the real line, we can define $M^-$ and the corresponding $A_{s}^-$ condition. Notice that $\omega(x)\in A_{s}^+\Leftrightarrow \omega(-x)\in A_{s}^-\Leftrightarrow \omega^{1-s'}\in A_{s^\prime}^- $.
In the limit case $p=1$, we have that $M^+$ is bounded from  $L^1(\R,w)$ into weak-$L^1(\R,w)$ if and only if  $w \in A_1^+$, that is, there exists $C>0$ such that $M^-w\leq Cw$ a.e.

It follows from \eqref{identity2} that if $\omega$ and $v$ are positive measurable  weights,
$v$ is locally integrable and $M^+:L^s(v)\to L^s(\omega)$ is bounded, for $1<s<+\infty$, and $k$ is as before,
then $k^*:L^s(v)\to L^s(\omega)$ is bounded and 
\begin{equation}\label{aprox}
\lim_{\varepsilon\to 0^+}f\ast k_\varepsilon(x)=\left( \int_\R k\right)f(x).
\end{equation}
almost everywhere and in the norm of $L^s(\omega)$ for all $f\in L^s(v)$. The same results hold if the   kernel is a real valued function such that
$|k|$ is dominated by an integrable function $\tilde k$ with support in
$(-\infty, 0)$ and increasing in $(-\infty, 0)$; the only difference is that we have 
$ \int_\R \tilde k$ instead of $ \int_\R k $.

\begin{rem}\label{positivity}
{\it From the definition we have the following:
if $\omega\in A_s^+$ then there exist $a$ and $b$, $-\infty\leq a\leq b\leq \infty$ such that
$\omega= 0$ in $(-\infty , a)$, $\omega=\infty$ in $(b,\infty )$, $0<\omega<\infty$ in $(a,b)$, $\omega\in L_{loc}^1(a,b)$ and,
if $1<s<\infty$,  $\omega^{1-s'}\in L_{loc}^1(a,b)$ (see \cite{Lai}).
Then, when working with one-sided weights, we can assume without loss of generality that $(a,b)=\R$. Throughout the paper, we assume that $0<\omega<\infty$ in $\R$.}
\end{rem}

\begin{rem}\label{property1}
{\it It follows from  \cite[Lemma 4,p. 540]{MT} 
that if $\omega\in A_s^+$ then for all $N>0$,
$$\int_{x>a+N} \frac{\omega^{1-s^\prime}(x)}{(x-a)^{p^\prime}}\,dx<\infty.$$}
\end{rem}

\subsection{The extension problem}
We are ready to state and prove a Theorem which contains
Theorem \ref{thm:A} of the Introduction.

\begin{thm}\label{STo}
Consider the semigroup of translations $T_tf(x) = f(x+t)$, $t\geq0$,
initially acting on functions $f\in\mathcal{S}$.
Let $P_t^\alpha f$, $0<\alpha<1$, be as in \eqref{gen Poisson}. Then:
 \begin{enumerate}[$(1)$]
 \item $P_t^\a$ is a bounded linear operator from $L^p(\R)$ into itself for $1\leq p\leq\infty$,
and $\|P_t^\a f\|_{L^p(\R)}\leq\|f\|_{L^p(\R)}$.
  \item When $f\in\mathcal{S}$, the Fourier transform of $P_t^\alpha f$ is given by 
  $$\widehat{P_t^\a f}(\xi)=\frac{2^{1-\a}}{\Gamma(\a)}\,(-it\xi^{1/2})^{\a}
\mathcal{k}_{\a}(-it\xi^{1/2})\widehat{f}(\xi),\quad\xi\in\R,$$
  where $\mathcal{k}_\nu(z)$ is the modified Bessel function of the third kind or Macdonald's function,
  which is defined for arbitrary $\nu$ and $z\in\C$, see \cite[Chapter~5]{Lebedev} and \eqref{identity}.
  In particular,
  $$\widehat{P_tf}(\xi)=e^{-t(-i\xi)^{1/2}}\widehat{f}(\xi).$$
  \item The maximal operator $P^{\alpha}_\ast f(x) = \sup_{t>0}|P_t^\alpha f(x)|$
  is bounded from $L^p(\R,w)$ into itself, for $w \in A_p^+, 1<p<\infty$,
  and from $L^1(\R,w)$ into weak-$L^1(\R,w)$, for $w \in A_1^+.$
  \item Let  $f \in L^p(w)$, for $w\in A_p^+$, $1\le p<\infty$.
  The function $U(x,t)\equiv P_t^\alpha f(x)$ is a classical solution to the extension problem
  $$\begin{cases}
     -D_{\rm right}U+\frac{1-2\a}{t}\,U_t+U_{tt}=0, & \hbox{in}~\R\times(0,\infty), \\
      \lim_{t\to0^+}U(x,t)=f(x), & \hbox{a.e and in}~L^p(w).
   \end{cases}$$
   Moreover, if $c_\a=\frac{4^{\a-1/2}\Gamma(\a)}{\Gamma(1-\a)}>0$ then
   \begin{equation}\label{limit}
   -c_\alpha\lim_{t\to0}\big(t^{1-2\alpha}U_t\big)=(D_{\rm right})^\alpha f,\quad\hbox{in the sense of distributions}.
   \end{equation}
 \end{enumerate}
\end{thm}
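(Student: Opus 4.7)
The plan is to exploit the identification $P_t^\alpha f = f \ast k_{t^2}$, where $k$ is the kernel defined at the start of this section, and to derive each of the four claims from this convolution representation together with the semigroup-theoretic machinery of Section \ref{powers}. For (1), the change of variables $s = 1/(4u)$ reduces $\int_{\R} k$ to $\Gamma(\alpha)^{-1} \int_0^\infty e^{-u} u^{\alpha-1}\,du = 1$, so $k_{t^2}$ is a probability density and Young's inequality gives contractivity on $L^p(\R)$. For (2), Fourier-transforming in $x$, using $\widehat{T_s f}(\xi) = e^{is\xi} \widehat{f}(\xi)$, and invoking the contour-shifting of Lemma \ref{lem:gamma function} to handle the purely imaginary parameter, reduces the claim to the standard integral representation $\mathcal{k}_\nu(z) = \tfrac12 (z/2)^\nu \int_0^\infty e^{-u - z^2/(4u)} u^{-\nu-1}\,du$ with $\nu = \alpha$ and $z = -i t \xi^{1/2}$. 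For (3), since $k$ is nonnegative, supported in $(-\infty, 0)$, and increasing on its support, \eqref{identity2} yields $P^\alpha_\ast f \leq M^+ f$ pointwise, and the weighted bounds follow from the $A_p^+$ theory recalled in Subsection \ref{approx}. The a.e.\ and $L^p(w)$-norm convergence $U(\cdot,t) \to f$ announced in (4) then comes from \eqref{aprox} applied to the same kernel, since $\int_\R k = 1$.

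For the pointwise validity of the extension PDE, I will differentiate $U$ under the integral sign and combine the identity
$$\frac{d}{ds}\!\left( \frac{e^{-t^2/(4s)}}{s^\alpha} \right) = \frac{e^{-t^2/(4s)}}{s^{1+\alpha}} \!\left( \frac{t^{2}}{4s} - \alpha \right)$$
with the translation-semigroup relation $D_{\rm right} T_s f = -\frac{d}{ds} T_s f$, transferring the $D_{\rm right}$ onto the kernel via an integration by parts in $s$ whose boundary term vanishes by the exponential decay of $e^{-t^2/(4s)}$ at $0$ and by the decay of $s^{-\alpha}$ at $\infty$. This is the Stinga--Torrea scheme specialized to $T_s$, and it produces the announced equation $-D_{\rm right} U + \tfrac{1-2\alpha}{t} U_t + U_{tt} = 0$ pointwise for $t > 0$.

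The main obstacle is the distributional identity $-c_\alpha \lim_{t\to 0^+} t^{1-2\alpha} U_t = (D_{\rm right})^\alpha f$. Differentiating $U$ in $t$, applying the identity above, and using the fact that $\int_0^\infty \frac{d}{ds}(e^{-t^2/(4s)} s^{-\alpha})\,ds = 0$ (so the constant-in-$s$ piece drops out) gives
$$t^{1-2\alpha} U_t(x,t) = -\frac{2}{4^\alpha \Gamma(\alpha)} \int_0^\infty \frac{d}{ds}\!\left( \frac{e^{-t^2/(4s)}}{s^\alpha} \right) \bigl( T_s f(x) - f(x) \bigr)\, ds.$$
One integration by parts in $s$, followed by the relation $(T_s f)'_s = f'(\cdot + s)$ and a second integration by parts (all boundary terms handled by pairing with a test function and by $T_s f(x) - f(x) = O(s)$ near zero), transforms the integrand into a multiple of $\int_0^\infty \bigl( f(x+s) - f(x) \bigr) s^{-1-\alpha}\,ds$. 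The prefactor then collapses under $\Gamma(1-\alpha) = -\alpha\,\Gamma(-\alpha)$ and the definition of $c_\alpha$ to exactly $\Gamma(-\alpha)^{-1}$, giving $(D_{\rm right})^\alpha f(x)$. Distributional convergence will be secured by pairing with $\varphi \in \mathcal{S}$, swapping the integrations by Fubini, and invoking dominated convergence with an $M^+ f$ majorant that is integrable against $|\varphi|$ by part (3).
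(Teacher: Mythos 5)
Your overall strategy coincides with the paper's: the convolution representation $P_t^\alpha f=f\ast k_{t^2}$ with a kernel of total mass one supported in $(-\infty,0)$, Minkowski/Young for (1), the Macdonald-function representation for (2), the one-sided approximate-identity machinery of Subsection \ref{approx} for (3) and for the boundary convergence in (4), and the insertion of $T_sf-f$ via the zero-mean identity for the Neumann limit. Two remarks on (2) and the PDE: taking $\nu=\alpha$ directly (rather than $\nu=-\alpha$ plus $\mathcal{K}_{-\alpha}=\mathcal{K}_\alpha$ as in the paper) is fine, but the contour rotation you need is for the integral $\int_0^\infty e^{-u-z^2/(4u)}u^{-\nu-1}\,du$ with $\arg z=\pm\pi/4$ exactly on the boundary of validity of \eqref{identity}, which Lemma \ref{lem:gamma function} does not cover; the paper handles this by an $\varepsilon$-regularization. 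Likewise, for $f$ merely in $L^p(w)$ the relations $D_{\rm right}T_sf=-\frac{d}{ds}T_sf$ and $(T_sf)'_s=f'(\cdot+s)$ are not available pointwise; the derivative must land on the kernel from the start (difference quotients in $x$, as in the paper), and passing the limit inside the integral genuinely uses the local integrability of $w^{-p'}$ and the tail estimate of Remark \ref{property1} for one-sided weights --- a step you do not address.

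The real gap is in your justification of the distributional limit \eqref{limit}. Your formula for $t^{1-2\alpha}U_t$ is correct, and the constant bookkeeping collapses as you say; but the two integrations by parts you describe are circular (the second undoes the first), and, more seriously, the proposed dominated-convergence majorant is on the wrong side. A bound by $M^+f(x)$, uniform in $t$, of $\int_0^\infty \frac{e^{-t^2/(4s)}}{s^{1+\alpha}}\bigl|\frac{t^2}{4s}-\alpha\bigr|\,|f(x+s)-f(x)|\,ds$ is false: as $t\to0$ the kernel tends to $\alpha s^{-1-\alpha}$, and $\int_0^1 |f(x+s)-f(x)|\,s^{-1-\alpha}\,ds$ is generically infinite for $f\in L^p(w)$, so no $t$-uniform majorant in terms of $f$ alone exists. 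The convergence is distributional precisely because the difference must be transferred to the test function: after Fubini one dominates $\int_0^\infty \frac{e^{-t^2/(4s)}}{s^{1+\alpha}}\bigl|\frac{t^2}{4s}-\alpha\bigr|\,|\varphi(y-s)-\varphi(y)|\,ds$ uniformly in $t$, using the Mean Value Theorem and the decay of $\varphi\in\mathcal{S}$ on the three regions $s<|y|/2$, $|y|/2<s<2|y|$, $s>2|y|$, by a function that is integrable against $|f|$ (this is where $f\in L_\alpha$, via Remark \ref{rem:Silvestre} and the weight hypotheses, enters). As written, your final step does not close; repaired along these lines it reproduces the paper's argument.
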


\begin{proof} 
To begin with,
observe that, by using the change of variables $t^2/(4s)=r$ in \eqref{gen Poisson},
\begin{equation}\label{Poisson cambio}
P_t^\a f(x)=\frac{1}{\Gamma(\a)}\int_0^\infty e^{-r}T_{t^2/(4r)}f(x)\,\frac{dr}{r^{1-\alpha}}.
\end{equation}
By applying Minkowski's integral inequality we have 
\begin{align*}
	\|P_t^\a f\|_{L^p(\R)} &\leq \frac{1}{\Gamma(\a)}\int_0^\infty e^{-r}
	\|T_{t^2/(4r)}f\|_{L^p(\R)} \,\frac{dr}{r^{1-\alpha}}\leq \|f\|_{L^p(\R)}.
\end{align*}

 Let us continue with (2). From \eqref{Poisson cambio} we can readily see that
\begin{equation}\label{Fourier Pt}
\widehat{P_t^\a f}(-\xi)= \Bigg(\frac{1}{\Gamma(\a)}\int_0^\infty e^{-r}
e^{-i\xi t^2/(4r)}\,\frac{dr}{r^{1-\a}}\Bigg)\widehat{f}(-\xi)=:H(t,\xi)\widehat{f}(-\xi).
\end{equation}
To relate the Fourier multiplier $H(t,\xi)$ with the Bessel function
$k_\a$ let us define, for $\varepsilon>0$,
$$H_\varepsilon(t,\xi)=\frac{1}{\Gamma(\a)}
\int_0^\infty e^{-r}e^{-(i\xi+\varepsilon)t^2/(4r)}\,\frac{dr}{r^{1-\a}}\quad t>0,~\xi\in\R.$$
It is clear that, by dominated convergence, $H_\varepsilon(t,\xi)\to H(t,\xi)$ as $\varepsilon\to0$,
for each $t,\xi$. Recall the following identity, see \cite[p.~119]{Lebedev},
\begin{equation}\label{identity}
\mathcal{K}_\nu(z)=\frac{1}{2}\Big(\frac{z}{2}\Big)^\nu\int_0^\infty e^{-r-z^2/(4r)}\,\frac{dr}{r^{1+\nu}},\quad|\arg z|<\pi/4,
\end{equation}
valid for arbitrary $\nu$. In \eqref{identity} we choose $\nu=-\a$ and
$z^2=z_\varepsilon^2=(i\xi+\varepsilon)t^2$. We have
$$z_\varepsilon=|(\xi^2+\varepsilon^2)t^4|^{1/4}e^{i\frac{\arg(i\xi+\varepsilon)}{2}}.$$
When $\xi>0$ we choose the argument above to be $0<\arg(i\xi+\varepsilon)<\pi/2$
and when $\xi<0$ we take $-\pi/2<\arg(i\xi+\varepsilon)<0$; thus $|\arg z_\varepsilon|<\pi/4$.
Therefore identity \eqref{identity} can be applied to this choice of $z=z_\varepsilon$.
The definition of $H_\varepsilon$ and the fact that $\mathcal{K}_{-\a}=\mathcal{K}_{\a}$ 
(see \cite[p.~110]{Lebedev}) then give
\begin{align*}
H_\varepsilon(t,\xi) &= \frac{2^{1-\a}}{\Gamma(\a)}\,z_\varepsilon^\a \mathcal{K}_{\a}(z_\varepsilon)
= \frac{2^{1-\a}}{\Gamma(\a)}\,|(\xi^2+\varepsilon^2)t^4|^{\a/4}e^{i\frac{\a\arg(i\xi+\varepsilon)}{2}}
\mathcal{K}_{\a}(z_\varepsilon) \\
&= \frac{2^{1-\a}}{\Gamma(\a)}\,|(\xi^2+\varepsilon^2)t^4|^{\a/4}
\big(\cos\tfrac{\a\arg(i\xi+\varepsilon)}{2}+i\sin\tfrac{\a\arg(i\xi+\varepsilon)}{2}\big)
\mathcal{K}_{\a}(z_\varepsilon).
\end{align*}
By taking the limit as $\varepsilon\to0$ in the last identity we get
\begin{align*}
H(t,\xi) &= \frac{2^{1-\a}}{\Gamma(\a)}\,z_0^\a \mathcal{K}_{\a}(z_0)
= \frac{2^{1-\a}}{\Gamma(\a)}\,|\xi|^{\a/2}t^\a 
e^{i\frac{\a\arg(i\xi)}{2}}\mathcal{K}_{\a}\Big(|\xi|^{1/2}te^{i\frac{\arg(i\xi)}{2}}\Big) \\
&= \frac{2^{1-\a}}{\Gamma(\a)}\,(t|\xi|^{1/2}e^{i\sgn\xi\frac{\pi}{4}})^\a
\mathcal{K}_{\a}\big(t|\xi|^{1/2}e^{i\sgn\xi\frac{\pi}{4}}\big) \\
&= \frac{2^{1-\a}}{\Gamma(\a)}\,(it|\xi|^{1/2}\sgn\xi)^{\a}\mathcal{K}_\a\big(it|\xi|^{1/2}\sgn\xi\big).
\end{align*}
The conclusion follows by replacing $\xi$ by $-\xi$ above and using \eqref{Fourier Pt}.
In particular, when $\a=1/2$, $\mathcal{K}_{1/2}(z_0)=(\frac{\pi}{2z_0})^{1/2}e^{-z_0}$,
see \cite[p.~112]{Lebedev}, so that
$H_{1/2}(t,\xi)=e^{-z_0}=e^{-t(i|\xi|\sgn\xi)^{1/2}}.$

For (3), by using (\ref{Poisson cambio}) we have
$$P^{\a}_\ast f(x)\le \frac{1}{\Gamma(\a)}\int_0^\infty e^{-r}\sup_{t>0}|T_{t^2/(4r)}f(x)|\,\frac{dr}{r^{1-\alpha}}
\le \|f\|_{L^\infty(\mathbb{R})}.$$
That is  $P^{\a}_\ast$ maps $L^\infty(\mathbb{R})$ into itself.  Note that, by calling $r=x+s$ in \eqref{gen Poisson},
$$P_t^\a f(x)=\frac{t^{2\alpha}}{4^\alpha\Gamma(\alpha)}
\int_x^\infty\frac{e^{-t^2/(4(r-x))}}{(r-x)^{1+\alpha}}f(r)\,dr=P_t^\a\ast f(x),$$
where the kernel is given by
$$P_t^\alpha(x):=\frac{t^{2\alpha}e^{-t^2/(4(-x))}}{4^\alpha\Gamma(\alpha)(-x)^{1+\alpha}}
\chi_{(-\infty,0)}(x).$$
A direct application of the results about approximations of the identity
and the characterization of $A_p^+$ presented in Subsection \ref{approx}
to the kernel $k_\alpha= P_1^\alpha(x)$ gives the complete statement (3).

Now we deal with (4). We compute:
\begin{align*}
	-D_{\rm right}ÊU(x,t) &= \lim_{h\to 0^+}\frac{U(x+h,t)-U(x,t)}{h} \\
	&= \frac{t^{2\alpha}}{4^\alpha\Gamma(\alpha)}\lim_{h\to 0^+}\frac{1}{h}
	\Bigg[\int_{x+h}^\infty\frac{e^{-t^2/(4(r-x-h))}}{(r-x-h)^{1+\alpha}}f(r)\,dr
	-\int_x^\infty\frac{e^{-t^2/(4(r-x))}}{(r-x)^{1+\alpha}}f(r)\,dr\Bigg] \\
	&= \frac{t^{2\alpha}}{4^\alpha\Gamma(\alpha)}\lim_{h\to 0^+}\frac{1}{h}
	\Bigg[\int_{x+h}^\infty\Bigg(\frac{e^{-t^2/(4(r-x-h))}}{(r-x-h)^{1+\alpha}}-
	\frac{e^{-t^2/(4(r-x))}}{(r-x)^{1+\alpha}}\Bigg)f(r)\,dr \\
	&\qquad-\int_x^{x+h}\frac{e^{-t^2/(4(r-x))}}{(r-x)^{1+\alpha}}f(r)\,dr\Bigg] .
\end{align*}
By the Mean Value Theorem,
$$\Bigg|\frac{e^{-t^2/(4(r-x-h))}}{(r-x-h)^{1+\alpha}}-
\frac{e^{-t^2/(4(r-x))}}{(r-x)^{1+\alpha}}\Bigg| \le 
Ch \sup_{0<\theta <1} \frac{e^{-t^2/(4(r-x-\theta h))}}{(r-x-\theta h)^{2+\alpha}}.$$
We can assume that $h<1/2$. Then,
\begin{align*}
\int_{x+h}^\infty  \Bigg| \frac1{h} &\Bigg(\frac{e^{-t^2/(4(r-x-h))}}{(r-x-h)^{1+\alpha}}-
	\frac{e^{-t^2/(4(r-x))}}{(r-x)^{1+\alpha}}\Bigg)f(r) \Bigg| \,dr \\ &\le 
	C(t)\int_{x}^{x+1} | f(r)| \,dr + 
	C(t) \int_{x+1}^\infty  \frac{| f(r)|}{(r-(x+ 1/2))^{2+\alpha}} \, dr.
\end{align*}
The absolutely convergence of both integrals follows from
Remarks \ref{positivity} and \ref{property1}. 
This allows us to pass the limit in $h$ inside the first integral. As for the second term, we observe 
$$\int_x^{x+h}\frac{e^{-t^2/(4(r-x))}}{(r-x)^{1+\alpha}}|f(r)|\,dr \le 
e^{-t^2/(8h)}\int_x^{x+h}\frac{e^{-t^2/(8(r-x))}}{(r-x)^{1+\alpha}}|f(r)|\,dr
\le C(t)e^{-t^2/(ch)}\int_x^{x+1}|f(r)|\,dr.$$
Aplying H\"older inequality and the local integrability of the weight (Remark \ref{positivity})  we get that this term multiplied by $1/h$ tends to $0$ as $h\to0$.
Pasting up  the last two thoughts we get 
\begin{align*}
	-D_{\rm right}ÊU(x,t) &= 
	 \frac{t^{2\s}}{4^\alpha\Gamma(\alpha)}
	\int_x^\infty \frac{d}{dx}\Bigg(\frac{e^{-t^2/(4(r-x))}}{(r-x)^{1+\alpha}}\Bigg)f(r)\,dr \\
	&= \frac{1}{4^\alpha\Gamma(\alpha)}
	\int_x^\infty\Bigg(\frac{d^2}{dt^2}+\frac{1-2\alpha}{t}\frac{d}{dt}\Bigg)
	\Bigg(\frac{t^{2\s}e^{-t^2/(4(r-x))}}{(r-x)^{1+\alpha}}\Bigg)f(r)\,dr \\
	&= U_{tt}(x,t)+\frac{1-2\alpha}{t}U_{t}(x,t).
\end{align*}
The last equality can again be justified by the absolutely convergence of the corresponding integrals.
Also, from \eqref{Poisson cambio} it is clear that $u(x,0)=f(x)$. To see \eqref{limit},
observe that
\begin{equation*}\label{zero int}
\int_0^\infty e^{-t^2/(4s)}\Bigg(2\a-\frac{t^2}{2s}\Bigg)\,\frac{ds}{s^{1+\a}}=0.
\end{equation*}
Given a smooth function $\varphi$, by Fubini's Theorem and  \eqref{gen Poisson} we get
\begin{align*}
 -c_\a\int_{\mathbb{R}}&t^{1-2\a}U_t(x,t)\varphi(x)\,dx  \\
 &= -\frac{4^{\a-1/2}\Gamma(\a)}{\Gamma(1-\a)}\frac{1}{4^\a\Gamma(\a)}
\int_{\mathbb{R}} \int_0^\infty e^{-t^2/(4s)}\Bigg(2\a-\frac{t^2}{4s}\Bigg)f(x+s)\,
\frac{ds}{s^{1+\a}}\,\varphi(x)\,dx  \\
&=  -\frac{1}{2\Gamma(1-\a)}
\int_{\mathbb{R}} \int_0^\infty e^{-t^2/(4s)}\Bigg(2\a-\frac{t^2}{4s}\Bigg)
\varphi(y-s)\,\frac{ds}{s^{1+\a}}\,f(y)\,dy \\
&= -\frac{1}{2\Gamma(1-\a)}
\int_{\mathbb{R}}\int_0^\infty e^{-t^2/(4s)}\Bigg(2\a-\frac{t^2}{4s}\Bigg)
\Big(\varphi(y-s)-\varphi(y)\Big) \,\frac{ds}{s^{1+\a}} \, f(y)\,dy.
\end{align*}
Therefore, 
$$-c_\a\lim_{t\to0^+}\int_{\mathbb{R}} t^{1-2\a}U_t(x,t)\varphi(x) \,dx
=\langle (D_{\rm left})^\a \varphi, \, f \rangle =\langle (D_{\rm right})^\a f,\varphi\rangle,$$
as we wanted to prove. 
To justify the interchange of the limit with the integral
we shall distinguish three cases. First, by using the Mean Value Theorem and the fact that $\varphi$
is in the Schwartz class, we can see that
$$\int_{s< |y|/2}  \Big|\varphi(y-s)-\varphi(y)\Big| \,\frac{ds}{s^{1+\a}}\le\frac{C_N}{(1+|y|)^N}.$$
for any $N$ large. On the other hand,
\begin{align*}
\int_{|y|/2 < s < 2|y|} \Big|\varphi(y-s)-\varphi(y)\Big| \,\frac{ds}{s^{1+\a}} 
& \le \int_{|y|/2 < s < 2|y|} |\varphi(y-s)|\,\frac{ds}{s^{1+\a}}
+ \int_{|y|/2 < s < 2|y|}|\varphi(y)| \,\frac{ds}{s^{1+\a}} \\
 &\le  \frac C{(1+|y|)^{1+\a}}  +C|\varphi(y)|.
 \end{align*}
Finally, 
$$\int_{s>2|y|} \Big|\varphi(y-s)-\varphi(y)\Big| \,\frac{ds}{s^{1+\a}} 
\le C_N \int_{s> |y|/2}\bigg|\frac1{1+|y-s|^N} -\frac1{1+|y|^N}\bigg| \,\frac{ds}{s^{1+\a}}
\le \frac{C_N}{1+|y|^N}.$$
An application of Remark \ref{rem:Silvestre} gives the conclusion.
\end{proof}

\section{Unified approach to several fractional operators defined by different authors}\label{unified}

Recall the definitions of the fractional integrals given in \eqref{kalpha}, \eqref{kalphaH} and \eqref{Halpha}.
Here is one of our unification results.

\begin{prop}\label{prop1}
Let $0< \alpha <1$. Then, for $f\in\mathcal{S}$, 
 \begin{itemize}
 \item[(i)] $\displaystyle k_\alpha^Hf = (-\Delta)^{-\alpha/2}f$;
 \item[(ii)] $\displaystyle k_\alpha f = \cot \tfrac{\pi \alpha}{2} (-\Delta)^{-\alpha/2}f$;
 \item[(iii)] $\displaystyle  H_\alpha f(x)=   \frac1{2\sin\frac{\pi \alpha}{2}} \Big( (D_{{\rm right}})^{-\alpha}f  -  (D_{\rm left})^{-\alpha}f \Big)$;
 \item[(iv)]   $H_\alpha f = (-\Delta)^{-\alpha/2} H f$,
 where $H$ denotes the classical Hilbert transform on $\R$.
\end{itemize}
\end{prop}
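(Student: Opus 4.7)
The plan is to reduce (i) to a composition of one-sided fractional integrals that can be computed directly, to deduce (ii) from (i) by a Gamma/trig identity, to get (iii) by splitting an integrand by sign, and to verify (iv) on the Fourier side.

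For (i), the starting point is the factorization $-\Delta=D_{\mathrm{right}}D_{\mathrm{left}}=D_{\mathrm{left}}D_{\mathrm{right}}$ on $\mathcal{S}$, which follows from $D_{\mathrm{right}}u=-u'$ and $D_{\mathrm{left}}u=u'$. On Fourier symbols this reads $|\xi|^{2}=(-i\xi)(i\xi)$, and raising to the $-\alpha/2$-th power gives
$$(-\Delta)^{-\alpha/2}=(D_{\mathrm{right}})^{-\alpha/2}(D_{\mathrm{left}})^{-\alpha/2}=W_{\alpha/2}\circ R_{\alpha/2}.$$
Next I would insert the explicit integral formulas for $W_{\alpha/2}$ and $R_{\alpha/2}$ from Section~\ref{powers}, apply Fubini, and reduce the resulting double integral to a convolution against $|x-s|^{\alpha-1}$. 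The inner integral, after a rescaling by $r=|x-s|$, is a Beta integral
$$\int_{0}^{\infty}u^{\alpha/2-1}(u+r)^{\alpha/2-1}\,du=r^{\alpha-1}B(\alpha/2,1-\alpha)=r^{\alpha-1}\frac{\Gamma(\alpha/2)\Gamma(1-\alpha)}{\Gamma(1-\alpha/2)}.$$
The reflection identity from \eqref{gamma1} then collapses the prefactor to $\Gamma(1-\alpha)\sin(\pi\alpha/2)/\pi$, which is exactly the constant in $k_{\alpha}^{H}$, proving (i).

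Part (ii) is now a matter of comparing the prefactors in $k_{\alpha}$ and $k_{\alpha}^{H}$: their ratio is $\pi/[2\Gamma(\alpha)\sin(\pi\alpha/2)\cdot\Gamma(1-\alpha)\sin(\pi\alpha/2)]$, which by $\Gamma(\alpha)\Gamma(1-\alpha)=\pi/\sin(\pi\alpha)$ and $\sin(\pi\alpha)=2\sin(\pi\alpha/2)\cos(\pi\alpha/2)$ simplifies to $\cot(\pi\alpha/2)$. For (iii), I would split the integrand in $H_{\alpha}f$ according to the sign of $t-x$; the two resulting pieces are $\Gamma(\alpha)W_{\alpha}f(x)$ and $\Gamma(\alpha)R_{\alpha}f(x)$, and recalling $W_{\alpha}=(D_{\mathrm{right}})^{-\alpha}$ and $R_{\alpha}=(D_{\mathrm{left}})^{-\alpha}$ from Section~\ref{powers} gives the claimed formula.

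Finally, (iv) is a Fourier multiplier check. From (iii) together with $\widehat{W_{\alpha}f}(\xi)=(-i\xi)^{-\alpha}\widehat{f}(\xi)$ and $\widehat{R_{\alpha}f}(\xi)=(i\xi)^{-\alpha}\widehat{f}(\xi)$ (which follow from Corollary~\ref{formulas} and the formulas in Subsection \ref{powers}), a short branch computation gives
$$(-i\xi)^{-\alpha}-(i\xi)^{-\alpha}=2i\,\mathrm{sgn}(\xi)\sin(\pi\alpha/2)\,|\xi|^{-\alpha},$$
so that $\widehat{H_{\alpha}f}(\xi)=i\,\mathrm{sgn}(\xi)\,|\xi|^{-\alpha}\widehat{f}(\xi)$. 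On the other hand $(-\Delta)^{-\alpha/2}H$ has multiplier $|\xi|^{-\alpha}$ times the Fourier multiplier of the Hilbert transform, $\pm i\,\mathrm{sgn}(\xi)$ depending on the sign convention for $H$, and matching the two proves (iv). The main obstacle is the Beta/Gamma bookkeeping in (i); once that constant is pinned down, the rest reduces to short applications of the identities in \eqref{gamma1} and Plancherel.
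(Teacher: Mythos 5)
Your proposal is correct, and parts (ii)--(iv) follow essentially the same route as the paper: (ii) is the same prefactor comparison via $\Gamma(\alpha)\Gamma(1-\alpha)=\pi/\sin(\pi\alpha)$ and the double-angle formula; (iii) is the same splitting of the integral according to the sign of $t-x$ (your sign, $(D_{\rm right})^{-\alpha}f-(D_{\rm left})^{-\alpha}f$, agrees with the statement and with the multiplier computation in (iv) --- the last line of the paper's own display for (iii) has the two terms transposed, evidently a typo); and (iv) is the same Fourier multiplier verification ending in $i\,\sgn(\xi)|\xi|^{-\alpha}\widehat f(\xi)$, with the same unstated reliance on the sign convention for $H$ that you at least flag. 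Where you genuinely diverge is (i). The paper quotes the one-dimensional Riesz potential representation $(-\Delta)^{-\alpha/2}f=\frac{\Gamma(\frac{1-\alpha}{2})}{\pi^{1/2}4^{\alpha/2}\Gamma(\alpha/2)}\int_\R|x-y|^{\alpha-1}f(y)\,dy$ and matches constants using the duplication and reflection formulas in \eqref{gamma1}; this is short but takes the Riesz kernel constant as known (it is never derived in the paper from the semigroup definition in Remark \ref{derivada7}). You instead factor $|\xi|^{-\alpha}=(-i\xi)^{-\alpha/2}(i\xi)^{-\alpha/2}$, hence $(-\Delta)^{-\alpha/2}=W_{\alpha/2}\circ R_{\alpha/2}$, and compute the composition by Fubini and the Beta integral $\int_0^\infty u^{\alpha/2-1}(u+r)^{\alpha/2-1}\,du=r^{\alpha-1}\,\Gamma(\alpha/2)\Gamma(1-\alpha)/\Gamma(1-\alpha/2)$; together with $1/\big(\Gamma(\alpha/2)\Gamma(1-\alpha/2)\big)=\sin(\pi\alpha/2)/\pi$ this produces exactly the constant of $k_\alpha^H$. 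Your route is longer but self-contained: it derives the Riesz kernel and its normalization rather than assuming them, at the modest price of a Fubini justification (harmless, since the iterated integral against $|f|$ is finite for $f\in\mathcal S$ and $0<\alpha<1$) and a one-line check that the principal branches satisfy $(-i\xi)^{-\alpha/2}(i\xi)^{-\alpha/2}=|\xi|^{-\alpha}$.
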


\begin{rem}\label{derivada7}
As we explained in the introduction we define $\displaystyle (-\Delta)^\beta =  \frac1{\Gamma(-\beta)}\int_0^\infty (e^{t\Delta}- Id) \frac{dt}{t^{1+\beta}} $ and $\displaystyle (-\Delta)^{-\beta} = \frac1{\Gamma(\beta)}\int_0^\infty e^{t\Delta} \frac{dt}{t^{1-\beta} }$, $0< \beta <1.$  
\end{rem}

\begin{proof}
By using (\ref{gamma1}) we have 
$$k_\alpha^H f= \frac{\Gamma(1-\alpha) \sin\frac12 \pi \alpha}{ \pi}
\frac{\pi^{1/2} 4^{\alpha/2} \Gamma(\alpha/2)}{\Gamma\Big(\frac{1-\alpha}{2}\Big)}
(-\Delta)^{-\alpha/2}f =  (-\Delta)^{-\alpha/2} f.$$
Also, 
\begin{align*}
 k_\alpha f &=  \frac1{2 \Gamma(\alpha) \sin \frac{\pi \alpha}{2} }
 \frac{\pi^{1/2} 4^{\alpha/2} \Gamma(\alpha/2)}{\Gamma\Big(\frac{1-\alpha}{2}\Big)}
 (-\Delta)^{-\alpha/2} f \\
&=  \frac {\pi }{ \sin\frac{\pi \alpha}{2} \Gamma( \frac{\alpha}{2} +\frac12)
\Gamma\Big(\frac{1-\alpha}{2}\Big)}  (-\Delta)^{-\alpha/2} f= \cot \tfrac{\pi \alpha}{2} (-\Delta)^{-\alpha/2}f .
\end{align*}
On the other hand,
\begin{align*}
  H_\alpha f(x) &=   \frac1{2\Gamma(\alpha) \sin \frac{\pi \alpha}{2} }
  \,  \bigg[\int_0^\infty \frac{f(x+u)}{u^{1-\alpha}}\, du -\int_0^\infty \frac{f(x-u)}{u^{1-\alpha}}\, du\bigg] \\
   &= \frac1{2\sin\frac{\pi \alpha}{2}} \Big( (D_{{\rm left}})^{-\alpha}f  -  (D_{\rm right})^{-\alpha}f \Big).
 \end{align*}
Finally, by using formulas (\ref{Fderivada}),
$$\widehat{H_\alpha f}(\xi) = \frac1{2\sin\frac{\pi \alpha}{2}}
\Big(  (-i\xi)^{-\alpha} - (i\xi)^{-\alpha} \Big) \widehat{f}(\xi)
=i\, {\rm sign}\,(\xi) |\xi|^{-\alpha} \widehat{f}(\xi).$$
\end{proof}

Next we will consider the fractional derivatives defined in \eqref{kmenosalpha}
and \eqref{Hmenosalpha}.
The following Theorem translates these operators to our language.

\begin{prop}\label{prop2}
Let $0<\alpha <1$. Then, for $u\in\mathcal{S}$,
\begin{itemize}
\item[(i)] $k_{-\alpha}u = (-\Delta)^{\alpha/2} u$;
\item[(ii)] $\displaystyle H_{-\alpha}u = -\frac1{2\sin\frac{\pi \alpha}{2}}
\big[(D_{\rm right})^\alpha u - (D_{\rm left})^\alpha u \big]$;
\item[(iii)] $H_{-\alpha}u =  H (-\Delta)^{\alpha/2} u,$ where $H$ is the Hilbert transform on $\R$.
\end{itemize}
\end{prop}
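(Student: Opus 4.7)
The three identities share the same structure as Proposition \ref{prop1}: each asserts that a two-sided fractional operator decomposes as a linear combination of the one-sided operators $(D_{\mathrm{right}})^\alpha$ and $(D_{\mathrm{left}})^\alpha$, which have explicit Fourier multipliers $(-i\xi)^\alpha$ and $(i\xi)^\alpha$ (from Section \ref{powers}). The plan is therefore to split each integral at $x$ (or via a simple algebraic identity), recognize the one-sided pieces, and then combine constants using the reflection formula $\Gamma(1+\alpha)\Gamma(-\alpha) = -\pi/\sin(\pi\alpha)$ together with $\sin(\pi\alpha) = 2\sin(\pi\alpha/2)\cos(\pi\alpha/2)$. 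To reach $(-\Delta)^{\alpha/2}$ and $H$, pass to the Fourier side and match symbols.

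I would do (ii) first, as it is purely algebraic. Writing $u(x+t)-u(x-t) = [u(x+t)-u(x)] - [u(x-t)-u(x)]$, the defining integral of $H_{-\alpha}u$ splits into two pieces equal, respectively, to $\Gamma(-\alpha)(D_{\mathrm{right}})^\alpha u(x)$ and $\Gamma(-\alpha)(D_{\mathrm{left}})^\alpha u(x)$. The overall constant becomes $\frac{\Gamma(1+\alpha)\cos(\pi\alpha/2)\Gamma(-\alpha)}{\pi}$, which by the two identities above collapses to $-1/[2\sin(\pi\alpha/2)]$. For (i), the same strategy (now splitting the principal value integral at $t=x$) yields
$$k_{-\alpha}u(x) = \frac{\Gamma(1+\alpha)\sin(\pi\alpha/2)\Gamma(-\alpha)}{\pi}\bigl[(D_{\mathrm{right}})^\alpha u(x) + (D_{\mathrm{left}})^\alpha u(x)\bigr].$$
Taking the Fourier transform and using $(-i\xi)^\alpha + (i\xi)^\alpha = 2|\xi|^\alpha\cos(\pi\alpha/2)$, the prefactor collapses (again by reflection and the double-angle formula) to match the symbol $|\xi|^\alpha$ of $(-\Delta)^{\alpha/2}$ from Remark \ref{derivada7}, delivering (i) (up to the sign convention fixed by the semigroup definition of $(-\Delta)^{\alpha/2}$).

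For (iii) I would apply the Fourier transform directly to the formula from (ii). The antisymmetric difference computes as $(-i\xi)^\alpha - (i\xi)^\alpha = -2i\,\mathrm{sgn}(\xi)|\xi|^\alpha\sin(\pi\alpha/2)$, so $\widehat{H_{-\alpha}u}(\xi) = i\,\mathrm{sgn}(\xi)|\xi|^\alpha\widehat u(\xi)$. Since $(-\Delta)^{\alpha/2}$ has multiplier $|\xi|^\alpha$ and the Hilbert transform contributes $i\,\mathrm{sgn}(\xi)$ (with the Fourier normalization used here), this coincides with the multiplier of $H(-\Delta)^{\alpha/2}$, proving (iii). Throughout, the interchanges of sums, integrals, and Fourier transforms are justified by the absolute convergence of all the integrals for $u\in\mathcal{S}$ (as in Remark \ref{rem:Silvestre}).

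The only real obstacle is bookkeeping: tracking the Gamma constants, the half/double-angle simplifications, and the branch choices in $(\pm i\xi)^\alpha$. No new machinery beyond what was already used in the proof of Proposition \ref{prop1} is needed, and the computations run in complete parallel.
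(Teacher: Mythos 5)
Your proposal is correct and runs along essentially the same lines as the paper's proof: split into the one-sided pieces, identify them with $\Gamma(-\alpha)(D_{\rm right})^\alpha u$ and $\Gamma(-\alpha)(D_{\rm left})^\alpha u$, and collapse the constants via the reflection and duplication formulas for the Gamma function (for (i) the paper matches kernel constants on the physical side rather than Fourier symbols, a purely cosmetic difference). Your parenthetical hedge in (i) is in fact pointing at a genuine subtlety: with the semigroup definition of Remark \ref{derivada7}, whose symbol is $+|\xi|^\alpha$, your computation (and indeed the paper's own displayed Gamma identity, which equates the \emph{negative} of the constant in $k_{-\alpha}$ with the kernel constant of $(-\Delta)^{\alpha/2}$) actually yields $k_{-\alpha}u=-(-\Delta)^{\alpha/2}u$, so the sign discrepancy lies in the statement rather than in your argument.
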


\begin{proof} By using (\ref{gamma1}) we have 
\begin{align*}
-\frac{\Gamma(1+\alpha) \sin\frac{\pi \a}{2}}{\pi} &=   -\frac{\Gamma(1/2+\alpha/2) \Gamma(1+\a/2)}
{2^{-\a} \sqrt{\pi}} \frac{\sin\frac{\pi \a}{2}}{\pi} \\
&=  \frac{\Gamma(1/2+\alpha/2) \Gamma(1+\a/2)}
{2^{-\a} \sqrt{\pi}} \frac{\sin\frac{\pi(- \a)}{2}}{\pi}  \\
&= \frac{\Gamma(1/2+\alpha/2) \Gamma(1+\a/2)}
{2^{-\a} \sqrt{\pi}\Gamma(-\a/2) \Gamma(1+\a/2)} 
= \frac{\Gamma(1/2+\a/2)}{2^{-\a} \sqrt{\pi} \Gamma(-\a/2)}.
\end{align*}
This gives (i). On the other hand, for (ii),
\begin{align*}
H_{-\a} u(x) &= \tfrac{1}{\pi}\Gamma(1+\a) \cos\tfrac{\pi \a}{2}
\bigg[\int_0^\infty \frac{u(x+t) -u(x)}{t^{1+\a}}\,dt
+\int_0^\infty \frac{u(x)-u(x-t)}{t^{1+\a}}\,dt\bigg] \\
&=\tfrac{1}{\pi}\Gamma(1+\a) \cos\tfrac{\pi \a}{2} \Gamma(-\a) 
\big[(D_{{\rm right}})^\alpha u(x)-  (D_{{\rm left}})^\a u(x)\big] \\
&=-\frac1{2\sin \frac{\pi \a}{2}}\big[(D_{{\rm right}})^\alpha u(x)-  (D_{{\rm left}})^\alpha u(x) \big].  
\end{align*}
Finally, the Fourier transform and \eqref{gamma1} produce
\begin{align*}
\widehat{H_{-\a} (u)}(\xi) &=
\tfrac{1}{\pi}\Gamma(1+\a) \cos\tfrac{\pi \a}{2} 
\Gamma(-\a)|\xi|^\a \bigg[\chi_{\xi>0} ((-i)^\a-(i)^\a)  
+\chi_{\xi<0} (i^\a-(-i)^\a)\bigg]\widehat{u}(\xi) \\ &=
-i\operatorname{sign}(\xi)\tfrac{1}{\pi}\Gamma(1+\a) \cos\tfrac{\pi \a}{2} 
\Gamma(-\a)|\xi|^\a 2\sin\tfrac{\pi \a}{2} \widehat{u}(\xi)\\
&= -i\operatorname{sign}(\xi)\tfrac{1}{\pi} \sin(\pi \a)  \Gamma(1+\a)\Gamma(-\a)  |\xi|^\a \widehat{u}(\xi)
 =  i\operatorname{sign}(\xi)|\xi|^\a\widehat{u}(\xi).
\end{align*}
\end{proof}

\section{Lateral Fractional Fundamental Theorem of Calculus}\label{Slateral}

We  begin this section by making some naive remarks about the composition of the operators
considered in Section \ref{powers}.
By combining Propositions \ref{prop1}  and \ref{prop2}, we see that 
the following compositions hold in $\mathcal{S}$:
\begin{itemize}
\item $k_{-\alpha}\circ k_\alpha^H f = (-\Delta)^{\alpha/2} \circ  (-\Delta)^{-\alpha/2} f = f$
\item $H_{\alpha} \circ H_{-\alpha} f = (-\Delta)^{-\alpha/2} H \circ H  (-\Delta)^{\alpha/2} f = -f$
\end{itemize}
On the other hand,  identities (\ref{derivada1}) and (\ref{derivada2}), together with their
Fourier transforms versions, imply the following Lemma.

\begin{lem}
For $f\in\mathcal{S}$, 
\begin{itemize}
\item[(i)] $ (D_{\rm right})^\alpha \circ (D_{\rm right})^{-\alpha} f
=  (D_{\rm right})^\alpha \circ W_\alpha f=  f$;
\item[(ii)] $ (D_{\rm left})^\alpha \circ (D_{\rm left})^{-\alpha} f
=  (D_{\rm left})^\alpha \circ R_\alpha f= f$.
\end{itemize}
\end{lem}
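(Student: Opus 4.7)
My plan is to argue entirely on the Fourier side, using the identities in \eqref{Fderivada} together with Corollary \ref{formulas} to turn each lateral fractional operator into a simple Fourier multiplier, and then observe that the two multipliers are reciprocals.

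First I would record the two multiplier formulas. For $f\in\mathcal{S}$, the integral
\[
W_\alpha f(x)=\frac{1}{\Gamma(\alpha)}\int_0^\infty f(x+s)s^\alpha\,\frac{ds}{s}
\]
converges absolutely and defines a smooth function of at most polynomial growth (its slowest decay is $|x|^{\alpha-1}$ as $x\to-\infty$ when $\int f\neq 0$), so $W_\alpha f$ is a tempered distribution and in fact lies in the space $L_\alpha$ introduced in Remark \ref{rem:Silvestre}. Pairing $W_\alpha f$ with a Schwartz test function, switching the order of integration, and applying the first identity of \eqref{nega} with $\lambda=\xi$ gives
\[
\widehat{W_\alpha f}(\xi)=(-i\xi)^{-\alpha}\widehat{f}(\xi).
\]
On the other hand, the expression for $(D_{\rm right})^\alpha$ in Subsection 2.2 together with the second identity of \eqref{nega} yields, in the distributional sense of Remark \ref{rem:Silvestre},
\[
\widehat{(D_{\rm right})^\alpha u}(\xi)=(-i\xi)^\alpha\,\widehat{u}(\xi),
\]
valid for any $u\in\mathcal{S}_\alpha'$.

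Second, I would compose the two identities. Since $W_\alpha f\in L_\alpha\subset\mathcal{S}_\alpha'$, the object $(D_{\rm right})^\alpha W_\alpha f$ is well defined by duality, and
\[
\widehat{(D_{\rm right})^\alpha W_\alpha f}(\xi)=(-i\xi)^\alpha(-i\xi)^{-\alpha}\widehat{f}(\xi)=\widehat{f}(\xi),
\]
because with the single branch used throughout (coming from Lemma \ref{lem:gamma function}), $(-i\xi)^{\alpha}\cdot(-i\xi)^{-\alpha}=1$ for every $\xi\neq 0$. Fourier inversion then gives (i). Statement (ii) is identical after replacing the kernel $(-i\xi)^{\pm\alpha}$ by $(i\xi)^{\pm\alpha}$, since $R_\alpha=(D_{\rm left})^{-\alpha}$ and $(D_{\rm left})^\alpha$ are the symmetric counterparts of $W_\alpha$ and $(D_{\rm right})^\alpha$.

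The step I expect to be the only real obstacle is the justification of the Fourier identities for $W_\alpha f$ and $(D_{\rm right})^\alpha W_\alpha f$ at the distributional level, because $W_\alpha f$ is \emph{not} in $\mathcal{S}$: its slow decay at $-\infty$ forces us to interpret $(D_{\rm right})^\alpha W_\alpha f$ through the duality $\langle(D_{\rm right})^\alpha W_\alpha f,\varphi\rangle=\langle W_\alpha f,(D_{\rm left})^\alpha\varphi\rangle$ of Remark \ref{rem:Silvestre}. Once Fubini is applied on the right-hand side and the integrals in $s$ and $t$ are recognized through \eqref{nega}, the whole argument reduces to the pointwise numerical identity $(-i\xi)^\alpha(-i\xi)^{-\alpha}=1$.
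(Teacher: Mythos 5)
Your proof is correct and takes essentially the same route as the paper, which disposes of this lemma in one line by invoking the integral formulas for $(D_{\rm right})^{\pm\alpha}$ together with their Fourier-multiplier versions $(-i\xi)^{\pm\alpha}$, whose product is identically $1$ (and $(i\xi)^{\pm\alpha}$ for the left-sided case). Your extra care in noting that $W_\alpha f\notin\mathcal{S}$ but lies in $L_\alpha\subset\mathcal{S}_\alpha'$, so that $(D_{\rm right})^\alpha W_\alpha f$ must be read through the duality of Remark \ref{rem:Silvestre}, only makes explicit what the paper leaves implicit.
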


The definitions of $ (D_{\rm left})^\alpha$ and $ (D_{\rm right})^\alpha$
contain a singularity at the origin. Therefore the definition for general
functions has to contain a limit argument. In fact, we define 
$$(D_{\rm right} )_\varepsilon^\alpha u(x)
=\frac1{\Gamma(\alpha)} \int_\varepsilon^\infty \frac{u(x+t)-u(x)}{t^{\alpha+1}}\,dt
= \frac1{\Gamma(\alpha)} \int_{x+\varepsilon}^\infty \frac{u(t)-u(x)}{(t-x)^{\alpha+1}}\,dt.$$
Hence $(D_{\rm right} )^\alpha u(x) $ will be the limit
$\lim_{\varepsilon \rightarrow 0} (D_{\rm right} )_\varepsilon^\alpha u(x)$, whenever it exists. 

As we said in the Introduction,
a natural question is to know what is the best space for which the compositions above hold.
Heywood proved that the compositions are true for functions
$f\in L^p$,  $1<p< \frac1{\alpha}$.
In \cite{A} the author proved that the compositions 
are also satisfied for functions $f \in L^p(w^p)$,
$1<p < \frac1{\alpha}$, and $w$ in $A_{p,q}$, $0<\alpha<1$, and  $\frac{1}{q}=\frac{1}{p}-\alpha$.
We recall that a nonnegative weight function $w(x)$
defined on $\mathbb{R}$ is said to satisfy the $A_{p,q}$ condition 
if there exists a constant $C$ such that
\begin{equation}\label{claseApq}
\bigg( |I|^{-1} \int_I w^q\bigg)^{1/q}\bigg( |I|^{-1} \int_I w^{-p'}\bigg)^{1/{p'}}  \le C,
\end{equation}
for all intervals $I\subset \mathbb{R},$ where $|I|$ denotes the length of $I$.
As usual for $1<p<\infty$, $p'=p/(p-1)$. It is worth pointing out that if $1<p < \frac1{\alpha}$, $0<\alpha<1$, and  $\frac{1}{q}=\frac{1}{p}-\alpha$ then  the usual two-sided fractional integral operator $k_\a:L^p(w^p)\to L^q(w^q)$ is bounded if and only if $w(x)$
satisfies the $A_{p,q}$ condition.

As we observe in Remark \ref{rem:lateral}, the operators appearing
in the last lemma are lateral operators. Thus the classes of functions
for which the identities hold should contain an essential lateral argument.
We recall that a weight $w$ is in $A_{p,q}^+$, for $1<p,q<\infty$, if there exists a constant $C>0$ such that
\begin{equation}\label{lateral}
\bigg(\frac{1}{h} \int_{a-h}^a w^q \bigg)^{1/q}\bigg(\frac{1}{h} \int^{a+h}_a w^{-p'}\bigg)^{1/p'} \leq C,
\end{equation}
for all $a\in\R$ and all $h>0$,
see \cite{AS}, \cite{MPT} and \cite{MT}. Notice that $A_{p,q}\subset A_{p,q}^+$
and let us point out that the following characterization was obtained in \cite{AS}.

\begin{thm}[\cite{AS}, see also \cite{MPT} and \cite{MT}]
\label{acotfrac}
Let $0<\alpha<1$,  $1<p< \frac{1}{\alpha}$ and  $\frac{1}{q}=\frac{1}{p}-\alpha$. $W_\alpha:L^p(w^p)\to L^q(w^q)$ is bounded if and only if $w$ satisfies the one-sided
Muckenhoupt $A_{p,q}^+$ condition.
 The same characterization holds  for 
the one-sided fractional maximal operator $M_\alpha^+$ defined as 
$$M^+_\alpha f(x)=\sup_{h>0}\frac{1}{h^{1-\alpha}}\int_x^{x+h}|f(t)|\,dt.$$
(Notice that
$w$ is in $A_{p,q}^+$ if and only if $w^q$ satisfies the
$A_r^+$ condition, where $r=1+(q/p^\prime)$.)
\end{thm}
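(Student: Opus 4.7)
The plan is to split the theorem into necessity (by direct testing) and sufficiency (via one-sided covering), handling $M_\alpha^+$ and $W_\alpha$ in tandem, but with $M_\alpha^+$ as the easier model that will control $W_\alpha$. For necessity, fix $a\in\R$ and $h>0$, and test on $f_h := w^{-p'}\chi_{(a-h,a)}$. Since $\mathrm{supp}\,f_h$ lies entirely to the right of every $x\in(a-2h,a-h)$ and within distance at most $2h$, the one-sided kernels give
$$W_\alpha f_h(x)\ \geq\ \frac{C}{h^{1-\alpha}}\int_{a-h}^{a} w^{-p'},\qquad M_\alpha^+ f_h(x)\ \geq\ \frac{1}{(2h)^{1-\alpha}}\int_{a-h}^{a} w^{-p'}.$$
Raising to the $q$-th power, integrating against $w^q$ over $(a-2h,a-h)$, combining with the identity $\|f_h\|_{L^p(w^p)}^p=\int_{a-h}^{a} w^{-p'}$ (which follows from $p(1-p')=-p'$), and using $1/p'+1/q=1-\alpha$, one arrives after rearrangement at
$$\Bigl(\tfrac{1}{h}\textstyle\int_{a-2h}^{a-h}w^q\Bigr)^{1/q}\Bigl(\tfrac{1}{h}\int_{a-h}^{a}w^{-p'}\Bigr)^{1/p'}\leq C,$$
which is precisely the $A_{p,q}^+$ condition \eqref{lateral} at the point $a-h$.

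For sufficiency, I first treat $M_\alpha^+$. The parenthetical equivalence $w\in A_{p,q}^+\Leftrightarrow w^q\in A_r^+$ with $r=1+q/p'$ is immediate from the definitions once one verifies $r'=1+p'/q$ and computes $(w^q)^{1-r'}=w^{-p'}$. Given this, the weak-type inequality $w^q(\{M_\alpha^+f>\lambda\})\leq C\lambda^{-q}\|f\|_{L^p(w^p)}^q$ follows from a one-sided Calder\'on--Zygmund/Vitali decomposition in the spirit of Sawyer \cite{S}: I select a disjoint family of intervals $(a_i,b_i)$ realizing the level set with $(b_i-a_i)^{\alpha-1}\int_{a_i}^{b_i}|f|\sim\lambda$, and I transfer the $w^q$-mass of $(a_i,b_i)$ to an $L^p(w^p)$ integral of $f$ over the \textit{right-adjacent} interval $(b_i,2b_i-a_i)$ by invoking $A_{p,q}^+$. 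Strong-type boundedness is then recovered by Marcinkiewicz interpolation, exploiting the openness of $A_{p,q}^+$ in $p$.

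To upgrade from $M_\alpha^+$ to $W_\alpha$, I will establish a one-sided good-$\lambda$ inequality of Coifman--Fefferman type: for some $\gamma>0$ depending only on $\alpha$, and any small $\eta>0$,
$$w^q\bigl(\{W_\alpha f>2\lambda,\,M_\alpha^+f\leq\gamma\lambda\}\bigr)\leq C\eta\,w^q\bigl(\{W_\alpha f>\lambda\}\bigr),$$
which upon integration in $\lambda$ yields $\|W_\alpha f\|_{L^q(w^q)}\leq C\|M_\alpha^+f\|_{L^q(w^q)}$. Establishing this good-$\lambda$ bound is the main technical obstacle, because the classical two-sided argument relies on symmetry of the kernel $|t-x|^{\alpha-1}$ under reflection, which is unavailable here. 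The one-sided substitute, carried out in \cite{MPT,MT}, proceeds intervalwise: inside each Sawyer interval $(a_i,b_i)$ one compares $W_\alpha f(x)$ with $M_\alpha^+f(b_i)$ by noting that the tail contribution $\int_{b_i}^{\infty}(t-x)^{\alpha-1}f(t)\,dt$ is essentially constant in $x\in(a_i,b_i)$, while the local contribution $\int_x^{b_i}(t-x)^{\alpha-1}f(t)\,dt$ is directly dominated by $(b_i-a_i)^{\alpha}M^+ f(x)\lesssim M_\alpha^+ f(x)$. Combining this transfer with the boundedness of $M_\alpha^+$ closes the sufficiency and completes the proof.
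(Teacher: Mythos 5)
The paper does not prove this theorem: it is stated as a quotation from \cite{AS} (see also \cite{MPT}, \cite{MT}), so there is no in-paper argument to compare yours against. Taken on its own terms, your necessity argument is essentially complete and correct: testing on $f_h=w^{-p'}\chi_{(a-h,a)}$ (truncated as $\min(w^{-p'},n)\chi_{(a-h,a)}$ if local integrability of $w^{-p'}$ is not known a priori, using $g^pw^p\le g$ and monotone convergence) and integrating over $(a-2h,a-h)$ does produce \eqref{lateral} at the point $a-h$, and the exponent bookkeeping $1/p'+1/q=1-\alpha$ is right; the same test function works for $M_\alpha^+$. The parenthetical equivalence $w\in A_{p,q}^+\Leftrightarrow w^q\in A_r^+$ with $r=1+q/p'$ is also verified correctly via $(w^q)^{1-r'}=w^{-p'}$ and $r/r'=q/p'$. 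The sufficiency half, however, is an outline rather than a proof: the two load-bearing steps --- the weak-type $(p,q)$ estimate for $M_\alpha^+$ under $A_{p,q}^+$ together with the openness of the one-sided classes needed for the Marcinkiewicz interpolation, and the one-sided good-$\lambda$ inequality giving $\|W_\alpha f\|_{L^q(w^q)}\le C\|M_\alpha^+f\|_{L^q(w^q)}$ --- are precisely the content of the cited references and are asserted rather than carried out. One local imprecision: in your intervalwise comparison the tail $\int_{b_i}^\infty(t-x)^{\alpha-1}f(t)\,dt$ is \emph{not} essentially constant for $x\in(a_i,b_i)$ when $t$ is near $b_i$; the near-tail over $(b_i,2b_i-a_i)$ must be split off and absorbed into the local/maximal term. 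Since the theorem is itself a citation in the paper, deferring these steps to \cite{AS}, \cite{MPT}, \cite{MT} is defensible, but as written the proposal is a correct strategy with the hard analysis outsourced, not a self-contained proof.
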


The lateral issue mentioned above is clarified in the following Theorems.
The first one is the precise version of Theorem \ref{thm:C}, while the second
one corresponds to Theorem \ref{thm:D}.

\begin{thm} \label{Inversionlateral2}
Let $0<\alpha<1$,  $1<p< \frac{1}{\alpha}$ and  $\frac{1}{q}=\frac{1}{p}-\alpha$.
If $w\in A_{p,q}^+$ then for all $f\in L^p(w^p)$,
$$\lim_{\varepsilon\to0^+}(D_{\rm right} )_\varepsilon^\alpha ((D_{\rm right} )^{-\alpha}f)(x)
=\lim_{\varepsilon\to0^+}(D_{\rm right} )_\varepsilon^\alpha (W_\alpha f)(x)= f(x),$$
in the almost everywhere sense and in the $L^p(w^p)$-norm.  
 \end{thm}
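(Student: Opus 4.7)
\emph{Proof strategy.} My plan is a standard two-step density argument: verify the identity on the dense subspace $\mathcal{S}\subset L^p(w^p)$ using the composition lemma $(D_{\rm right})^\alpha\circ W_\alpha = \mathrm{Id}$, and then transfer it to $L^p(w^p)$ via a uniform maximal function bound. The key structural observation is that $(D_{\rm right})_\varepsilon^\alpha\circ W_\alpha$ is convolution with a one-sided approximation of the identity of the type studied in Subsection \ref{approx}.

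\emph{Step 1 (Convolution structure).} Since $W_\alpha$ is convolution with $\Gamma(\alpha)^{-1}(-y)^{\alpha-1}\chi_{(-\infty,0)}(y)$ and $(D_{\rm right})_\varepsilon^\alpha$ is convolution (in the distributional sense) with $\Gamma(-\alpha)^{-1}\bigl[(-y)^{-1-\alpha}\chi_{(-\infty,-\varepsilon)}(y)-(\alpha\varepsilon^\alpha)^{-1}\delta_0\bigr]$, Fubini (initially for $f\in\mathcal{S}$ and then extended by the bounds of Step 3) gives
$$(D_{\rm right})_\varepsilon^\alpha W_\alpha f(x) = (f*\Phi_\varepsilon)(x),$$
where $\Phi_\varepsilon$ is supported in $(-\infty,0)$. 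A direct change of variables in the integrand exhibits the dilation structure $\Phi_\varepsilon(y)=\varepsilon^{-1}\Phi_1(y/\varepsilon)$.

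\emph{Step 2 (Analysis of $\Phi_1$).} Explicit computation yields the piecewise closed form
$$\Phi_1(y)\;\propto\;\begin{cases}|y|^{\alpha-1}, & -1<y<0,\\ \bigl(|y|^\alpha-(|y|-1)^\alpha\bigr)/|y|, & y<-1,\end{cases}$$
with a positive proportionality constant once one absorbs the negative factor $[\Gamma(-\alpha)\Gamma(\alpha)]^{-1}$. I would then check the three hypotheses needed to invoke the approximation of identity machinery of Subsection \ref{approx}: (i) continuity at $y=-1$, where both branches equal the same value; (ii) $\Phi_1$ is nonnegative and increasing on $(-\infty,0)$ — immediate on $(-1,0)$ because $\alpha<1$, and on $(-\infty,-1)$ by differentiating the second branch and using the concavity of $t\mapsto t^\alpha$; and (iii) $\Phi_1\in L^1(\R)$ with $\int\Phi_1=1$. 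Integrability follows from the asymptotics $\Phi_1(y)\sim|y|^{\alpha-1}$ as $y\to 0^-$ and $\Phi_1(y)\sim|y|^{\alpha-2}$ as $y\to-\infty$, while the normalization is pinned down by the composition lemma: for $f\in\mathcal{S}$, dominated convergence in the defining integrals gives $f*\Phi_\varepsilon(x)\to(D_{\rm right})^\alpha W_\alpha f(x)=f(x)$ pointwise, forcing $\int\Phi_1=1$.

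\emph{Step 3 (Maximal inequality and conclusion).} Because $1/q=1/p-\alpha$ and $\alpha>0$ we have $q>p$, so by Jensen's inequality
$$\Bigl(\tfrac{1}{h}\int_{a-h}^{a}w^p\Bigr)^{1/p}\leq\Bigl(\tfrac{1}{h}\int_{a-h}^{a}w^q\Bigr)^{1/q};$$
combined with the $A_{p,q}^+$ condition on $w$ this gives $w^p\in A_p^+$, and hence $M^+$ is bounded on $L^p(w^p)$. By Step 2 and the results of Subsection \ref{approx} (inequality \eqref{identity2} and the convergence result \eqref{aprox}), the maximal operator $\sup_{\varepsilon>0}|f*\Phi_\varepsilon|$ is pointwise dominated by $C\,M^+f$ and bounded on $L^p(w^p)$, and $f*\Phi_\varepsilon\to f$ both pointwise a.e.~and in the $L^p(w^p)$-norm for every $f\in L^p(w^p)$. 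Together with Step 1 this is precisely the statement of the theorem.

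\emph{Main obstacle.} The technical heart of the argument lies in Step 2: extracting the closed form of $\Phi_1$ (a somewhat delicate Fubini computation near the singularities at $y=0$ and $y=-\varepsilon$), verifying the monotonicity on $(-\infty,-1)$ by calculus, and tracking the negative overall prefactor $[\Gamma(-\alpha)\Gamma(\alpha)]^{-1}$ to conclude positivity. Once $\Phi_1$ is identified as a positive, increasing function on $(-\infty,0)$ with unit integral, the rest reduces to the approximation-of-identity machinery already developed in Subsection \ref{approx}.
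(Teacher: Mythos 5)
Your overall strategy is the one the paper uses: identify $(D_{\rm right})_\varepsilon^\alpha\circ W_\alpha$ as convolution with a dilated kernel supported in $(-\infty,0)$, check that this kernel is nonnegative, increasing and of unit mass, and then invoke the one-sided approximate-identity machinery of Subsection \ref{approx} together with the implication $w\in A_{p,q}^+\Rightarrow w^p\in A_p^+$ (your Jensen argument for this is correct, and your closed form for $\Phi_1$ agrees with the paper's kernel $\tilde k$ in \eqref{eq:k tilde}). Your soft normalization argument, deducing $\int\Phi_1=1$ from the composition lemma on $\mathcal{S}$ instead of computing the integral explicitly as in Lemma \ref{11}, is a legitimate and arguably cleaner variant.

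The one step that does not work as written is the extension of the convolution identity from $\mathcal{S}$ to general $f\in L^p(w^p)$. A density argument transfers the \emph{convergence} $f\ast\Phi_\varepsilon\to f$, but it cannot transfer the \emph{identity} $(D_{\rm right})_\varepsilon^\alpha(W_\alpha f)=f\ast\Phi_\varepsilon$: the left-hand side is not continuous in $f$ in any obvious sense, and the maximal bound of your Step 3 controls $f\ast\Phi_\varepsilon$, not $(D_{\rm right})_\varepsilon^\alpha(W_\alpha f)$, so invoking "the bounds of Step 3" here is circular. For each fixed $\varepsilon$ and a.e.\ $x$ the identity must be verified directly by an absolute-convergence argument justifying Fubini, and the integrand involves the signed kernel $k$ of \eqref{eq:k}, so Tonelli does not apply; one has to bound $\int_{-\infty}^0 f(x-y)\,\varepsilon^{-1}\overline{k}(y/\varepsilon)\,dy$ with $\overline{k}(y)=|y|^{-1}\int_y^0|k|$, splitting into a tail piece handled by H\"older together with the finiteness of $\int_b^\infty w^{-p'}(y)(y-a)^{-(1-\alpha)p'}\,dy$ (Lemma \ref{segovialiliana}, available here because $W_\alpha$ is bounded on $L^p(w^p)$ by Theorem \ref{acotfrac}) and a local piece controlled by $M^+f(x)<\infty$. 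This is precisely the content of the Claim in the proof of Theorem \ref{Inversionlateral3}, from which the paper then deduces Theorem \ref{Inversionlateral2}; without it your argument establishes convergence of $f\ast\Phi_\varepsilon$ but not that this object is the truncated fractional derivative of $W_\alpha f$.
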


\begin{thm}\label{Inversionlateral3}
Let $0<\alpha<1$,  $1<p< \frac{1}{\alpha}$ and  $\frac{1}{q}=\frac{1}{p}-\alpha$.
Let  $w$ be a nonnegative measurable function on $\R$ such that $w^p$ and $w^{-p^\prime}$ are
locally integrable. Assume that $W_\alpha f(x) = (D_{\rm right} )^{-\alpha} f(x)<+\infty$ a.e. for all
 $f\in L^p(w^p)$.  Then
$$f(x)=\lim_{\varepsilon\to 0^+}(D_{\rm right} )_\varepsilon^\alpha (W_\alpha f)(x)\quad
\hbox{for a.e.}~x\in\R.$$
\end{thm}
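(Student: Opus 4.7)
The strategy I would follow is to recast $(D_{\rm right})_\varepsilon^\alpha W_\alpha f(x)$ as a one-sided convolution against an explicit approximation of the identity, and then to invoke the pointwise a.e.\ convergence for such kernels summarised in Subsection \ref{approx}.

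To begin, I would fix $x$ in the full-measure set where $W_\alpha|f|(x)<\infty$ (which exists by applying the hypothesis to $|f|\in L^p(w^p)$). Expanding both operators produces the double integral
\begin{equation*}
\Gamma(-\alpha)\Gamma(\alpha)(D_{\rm right})_\varepsilon^\alpha W_\alpha f(x) = \int_\varepsilon^\infty\int_0^\infty \frac{f(x+r+s)-f(x+r)}{s^{1+\alpha}}\,r^{\alpha-1}\,dr\,ds,
\end{equation*}
and the closed-form evaluation
\begin{equation*}
\int_\varepsilon^\rho \frac{(\rho-s)^{\alpha-1}}{s^{1+\alpha}}\,ds = \frac{(\rho-\varepsilon)^\alpha}{\alpha\rho\,\varepsilon^\alpha}\quad (\rho>\varepsilon),
\end{equation*}
obtained via $s=\rho u$, shows that the absolute value of the double integral is dominated by a finite multiple of $W_\alpha|f|(x)$. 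Fubini is therefore legal. Substituting $\tau=r+s$ in the first half, swapping the order of integration, and collecting constants via $\Gamma(\alpha)\Gamma(1-\alpha)=\pi/\sin(\pi\alpha)$ then reduces the whole expression to
\begin{equation*}
(D_{\rm right})_\varepsilon^\alpha W_\alpha f(x) = \frac{\sin(\pi\alpha)}{\pi}\int_0^\infty f(x+r)\,\phi_\varepsilon(r)\,dr,\qquad \phi_\varepsilon(r)=\tfrac{1}{\varepsilon}\phi(r/\varepsilon),\ \phi(s)=\frac{s^\alpha-(s-1)_+^\alpha}{s}.
\end{equation*}

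Next I would verify that $\phi$ fits exactly the setting of Subsection \ref{approx}: it is nonnegative, integrable on $(0,\infty)$ (equal to $s^{\alpha-1}$ on $(0,1)$ and of order $s^{\alpha-2}$ for large $s$), and decreasing on $(0,\infty)$ (on $(1,\infty)$ both $s^\alpha-(s-1)^\alpha$ and $1/s$ are decreasing). Hence $\widetilde{\phi}(z):=\phi(-z)\chi_{(-\infty,0)}(z)$ is nonnegative, supported in $(-\infty,0)$, and increasing there, so the machinery of Subsection \ref{approx} applies. The total mass can be obtained directly by splitting at $s=1$ and using $\int_0^\infty t^{\alpha-1}/(t+1)\,dt=\pi/\sin(\pi\alpha)$, or most cleanly by testing the intended identity on $f(x)=e^{-x}$, for which $W_\alpha f=f$ and $(D_{\rm right})^\alpha f=f$; either route gives $\int_0^\infty\phi(s)\,ds=\pi/\sin(\pi\alpha)$.

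Since $f\in L^p(w^p)$ and $w^{-p'}$ is locally integrable, H\"older's inequality gives $f\in L^1_{\rm loc}(\R)$, so the pointwise convergence statement \eqref{aprox} produces
\begin{equation*}
\lim_{\varepsilon\to 0^+}\int_0^\infty f(x+r)\phi_\varepsilon(r)\,dr = \frac{\pi}{\sin(\pi\alpha)}f(x)\qquad\text{for a.e. }x\in\R,
\end{equation*}
which, multiplied by $\sin(\pi\alpha)/\pi$, is the conclusion of the theorem. I expect the main obstacle to be the Fubini step: under the minimal hypothesis that $W_\alpha$ is merely a.e.\ finite on $L^p(w^p)$, no operator bounds are available, and every swap of the order of integration has to be justified by the explicit pointwise bound coming from the evaluation of $\int_\varepsilon^\rho (\rho-s)^{\alpha-1}s^{-1-\alpha}\,ds$ together with the a.e.\ finiteness of $W_\alpha|f|$.
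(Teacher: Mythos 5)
Your identification of the kernel is correct and, up to that point, your argument is actually cleaner than the paper's: the closed-form evaluation $\int_\varepsilon^\rho(\rho-s)^{\alpha-1}s^{-1-\alpha}\,ds=(\rho-\varepsilon)^\alpha/(\alpha\rho\varepsilon^\alpha)$ dominates the double integral directly by a constant (depending on $\varepsilon$) times $W_\alpha|f|(x)$, which legitimises Fubini at every point of the full-measure set where $W_\alpha|f|$ is finite. The paper's Claim reaches the same kernel $\tilde k_\varepsilon$ but justifies the interchange by splitting the majorant kernel and invoking H\"older against $w^{-p'}$ together with Lemma \ref{segovialiliana}; your route avoids that.

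The gap is in the last step. The kernel $\phi(s)=\bigl(s^\alpha-(s-1)_+^\alpha\bigr)/s$ is supported on all of $(0,\infty)$ and decays only like $s^{\alpha-2}$, so $f\ast\phi_\varepsilon(x)$ samples $f$ arbitrarily far to the right of $x$; for a general $f\in L^1_{\rm loc}$ the convolution need not even be finite, and the a.e.\ convergence $f\ast\phi_\varepsilon\to(\int\phi)f$ is simply false at that level of generality. The statement \eqref{aprox} you cite is not a theorem about locally integrable functions: it requires $f\in L^s(v)$ for weights such that $M^+:L^s(v)\to L^s(\omega)$ is bounded, because the a.e.\ limit is obtained by the usual density-plus-maximal-function mechanism. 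Establishing that maximal control under the theorem's weak hypothesis is precisely the content of the paper's proof: the assumption that $W_\alpha$ is a.e.\ finite on all of $L^p(w^p)$ is converted, via Lemma \ref{segovialiliana}, into the integrability condition $\int_b^\infty w^{-p'}(y)(y-a)^{-(1-\alpha)p'}\,dy<\infty$, which by Remark \ref{remark} is condition (b) of the Andersen--Sawyer Lemma \ref{andersensawyer}; that lemma then supplies a weight $\omega$ with $M^+:L^p(w^p)\to L^p(\omega^p)$ bounded, whence $M^+f<\infty$ a.e.\ and \eqref{aprox} applies. You never invoke either lemma and never show $M^+f(x)<\infty$ a.e., so your appeal to \eqref{aprox} is unsupported. (The step could alternatively be repaired without Andersen--Sawyer by exploiting the sharper tail bound $\phi_\varepsilon(r)\leq C\varepsilon^{1-\alpha}r^{\alpha-2}$ for $r\geq 2\varepsilon$, which together with $W_\alpha|f|(x)<\infty$ kills the contribution of $r\geq\delta$ as $\varepsilon\to0$, leaving only the local part to be handled at right Lebesgue points of $f$; but that argument has to be made, and it is not the one you wrote.)
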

 
We shall prove first Theorem \ref{Inversionlateral3}. Then Theorem \ref{Inversionlateral2}
follows easily.

Finally, as anticipated in the Introduction, we also characterize the range  of $W_\alpha$.

 \begin{thm} \label{Rango}
Let $0<\alpha<1$,  $1<p< \frac{1}{\alpha}$ and  $\frac{1}{q}=\frac{1}{p}-\alpha$.
If $w\in A_{p,q}^+$ and  $u$ is a measurable function on $\R$
then the following statements are equivalent.
\begin{itemize}
\item[(a)] There exists $f\in L^p(w^p)$ such that $u=W_\alpha f$.
\item[(b)] $u\in L^q(w^q)$ and there exists the limit
$\lim_{\varepsilon\to0^+}(D_{\rm right} )_\varepsilon^\alpha u(x)$ in the norm of $L^p(w^p)$.
\item[(c)] $u\in L^q(w^q)$ and
$\sup_{\varepsilon>0}||(D_{\rm right} )_\varepsilon^\alpha u||_{L^p(w^p)}<+\infty$.
\end{itemize}  
\end{thm}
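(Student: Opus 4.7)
The plan is to prove the cycle (a)$\Rightarrow$(b)$\Rightarrow$(c)$\Rightarrow$(a). The first two implications are essentially formal. If $u=W_\alpha f$ for some $f\in L^p(w^p)$, then Theorem \ref{acotfrac} yields $u\in L^q(w^q)$, and Theorem \ref{Inversionlateral2} applied to this $f$ supplies $(D_{\rm right})_\varepsilon^\alpha u\to f$ in $L^p(w^p)$, proving (b). The implication (b)$\Rightarrow$(c) is immediate since any norm-convergent family is norm-bounded.

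The substantive step is (c)$\Rightarrow$(a). I would exploit the reflexivity of $L^p(w^p)$ ($1<p<\infty$): the hypothesis in (c) gives a sequence $\varepsilon_n\downarrow 0$ and a function $f\in L^p(w^p)$ such that
$$(D_{\rm right})_{\varepsilon_n}^\alpha u \rightharpoonup f\quad\text{weakly in } L^p(w^p).$$
The goal is to identify $u=W_\alpha f$ almost everywhere; once this is in hand, the already-proven (a)$\Rightarrow$(b) upgrades the weak subsequential convergence to norm convergence of the whole family $\{(D_{\rm right})_\varepsilon^\alpha u\}_{\varepsilon>0}$ to $f$.

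To identify $u$ with $W_\alpha f$, I test against $\varphi\in C_c^\infty(\R)$. Repeating the Fubini computation of Remark \ref{rem:Silvestre} for the truncated operators yields
$$\int_\R (D_{\rm right})_\varepsilon^\alpha u(x)\,\varphi(x)\,dx = \int_\R u(x)\,(D_{\rm left})_\varepsilon^\alpha\varphi(x)\,dx,$$
and the interchange is legitimate because of the truncation at $\varepsilon>0$ and the compact support of $\varphi$. Since Remark \ref{positivity} gives $w^{-p'}\in L^1_{\mathrm{loc}}$, we have $\varphi\in L^{p'}(w^{-p'})$ and the left-hand side tends along $\varepsilon_n$ to $\int f\varphi\,dx$. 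For the right-hand side, the uniform pointwise bound $|(D_{\rm left})_\varepsilon^\alpha\varphi(x)|\le C_\varphi(1+|x|)^{-1-\alpha}$ (elementary for $\varphi\in C_c^\infty$), combined with the $A_{p,q}^+$ tail integrability of Remark \ref{property1}, allows dominated convergence and produces
$$\int_\R f\varphi\,dx = \int_\R u(x)\,(D_{\rm left})^\alpha\varphi(x)\,dx\quad\text{for all }\varphi\in C_c^\infty(\R).$$
Inserting $\varphi=R_\alpha\psi$ for $\psi\in C_c^\infty(\R)$, the left-sided inversion identity $(D_{\rm left})^\alpha R_\alpha\psi=\psi$ (the mirror image of Theorem \ref{Inversionlateral2}, valid by reflecting $x\mapsto -x$) together with the adjoint relation $\int W_\alpha f\cdot\psi\,dx=\int f\cdot R_\alpha\psi\,dx$ (Fubini) converts the identity above into $\int_\R W_\alpha f\cdot\psi\,dx = \int_\R u\cdot\psi\,dx$ for all $\psi\in C_c^\infty(\R)$, whence $u=W_\alpha f$ a.e.

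The main obstacle is that $R_\alpha\psi$ is \emph{not} compactly supported: for $\psi\in C_c^\infty$ it decays only like $x^{\alpha-1}$ as $x\to+\infty$. Consequently the displayed identity has to be extended from $\varphi\in C_c^\infty$ to the admissible class $\varphi=R_\alpha\psi$ by a truncation-and-limit argument, checking at each step that $u\cdot(D_{\rm left})^\alpha R_\alpha\psi$ and $f\cdot R_\alpha\psi$ are integrable against their respective weights. This is exactly where the sharp $A_{p,q}^+$ tail estimate of Remark \ref{property1} is used, and it is the reason the full $A_{p,q}^+$ hypothesis is needed here (as opposed to merely the local integrability of $w^p$ and $w^{-p'}$ used in Theorem \ref{Inversionlateral3}).
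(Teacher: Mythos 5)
Your cycle (a)$\Rightarrow$(b)$\Rightarrow$(c)$\Rightarrow$(a) is a legitimate reorganization, and the first two implications are handled exactly as in the paper ((a)$\Rightarrow$(b) via Theorems \ref{acotfrac} and \ref{Inversionlateral2}; (b)$\Rightarrow$(c) is trivial). For (c)$\Rightarrow$(a) your starting move — extract a weakly convergent subsequence $(D_{\rm right})_{\varepsilon_n}^\alpha u\rightharpoonup f$ by reflexivity — is the same as the paper's, and the duality identity $\langle (D_{\rm right})_\varepsilon^\alpha u,\varphi\rangle=\langle u,(D_{\rm left})_\varepsilon^\alpha\varphi\rangle$ with passage to the limit is also in the spirit of the paper's argument (note that even here the Fubini step needs more than compact support of $\varphi$: one must control $\int_{x+\varepsilon}^\infty|u(t)|(t-x)^{-1-\alpha}\,dt$, which the paper does via $M^+u\in L^q(w^q)$ and which you would get from $u\in L^q(w^q)$ together with Remark \ref{property1} for $w^q\in A_q^+$).

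The genuine gap is the step you yourself flag at the end: upgrading the identity $\int f\varphi=\int u\,(D_{\rm left})^\alpha\varphi$ from $\varphi\in C_c^\infty$ to $\varphi=R_\alpha\psi$. This is not a routine truncation. Writing $\varphi_N=R_\alpha\psi\cdot\eta_N$ with $\eta_N$ a cutoff equal to $1$ on $[-N,N]$, the error $g_N=R_\alpha\psi(1-\eta_N)$ is supported in $[N,\infty)$, so $(D_{\rm left})^\alpha g_N$ vanishes for $x<N$, but on the transition region $N\le x\le CN$ one only gets $|(D_{\rm left})^\alpha g_N(x)|\lesssim N^{\alpha-1}$, and the resulting bound $N^{\alpha-1}\|u\|_{L^q(w^q)}\big(\int_N^{CN}w^{-q'}\big)^{1/q'}$ is only $o(N^{\alpha})$ under the available tail estimate $\int_{x>c}w^{-q'}(x)x^{-q'}\,dx<\infty$; it does not obviously tend to zero. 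So the convergence $\int u\,(D_{\rm left})^\alpha\varphi_N\to\int u\,\psi$ is exactly the delicate point, and your proposal leaves it unproved. The paper avoids this entirely with Lemma \ref{Densidad}: it tests against the Lizorkin class $\Phi$, which is \emph{invariant} under $(D_{\rm left})^{-\alpha}$ and \emph{dense} in $L^{q'}(w^{-q'})$, so that $h=(D_{\rm left})^{-\alpha}\varphi$ is again an admissible (rapidly decaying) test function and the left-sided version of Theorem \ref{Inversionlateral3} gives $(D_{\rm left})_\varepsilon^\alpha h\to\varphi$ in $L^{q'}(w^{-q'})$, which pairs directly with $u\in L^q(w^q)$. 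To close your argument you would either need to carry out the truncation estimates with sharper control on $(D_{\rm left})^\alpha g_N$, or simply replace $C_c^\infty$ by $\Phi$ from the outset — at which point you have reproduced the paper's proof.
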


\subsection{Some preliminaries}\label{prelims}
 
In order to prove the last statement we need a lemma (interesting on its own right)
which provides a convenient dense class in the one-sided weighted spaces
that is invariant for the fractional integrals
$I_\alpha$, $W_\alpha =(D_{\rm right} )^{-\alpha}$ and $R_\alpha = (D_{\rm left})^{-\alpha}$.
It is known that the Lizorkin class:
 $$\Phi:=\bigg\{\varphi\in \mathcal{S}: \frac{d^k\widehat\varphi}{dx^k}(0)=0, k\in\N\bigg\}=
 \bigg\{\varphi\in \mathcal{S}: \int{x^k}\varphi(x)\,dx=0, k\in\N\bigg\},$$
 is invariant for $I_\alpha$, $W_\alpha =(D_{\rm right} )^{-\alpha}$ and $R_\alpha = (D_{\rm left})^{-\alpha}$.
Furthermore, if $w$ is a weight in the Muckenhoupt $A_p$ class, $1<p<\infty$,
then $\Phi$ is dense in $L^p(w^p)$, see  \cite{NS}. That is also true for one-sided weighted spaces.

 \begin{lem} \label{Densidad}
 Let $0<\alpha<1$ and $1<s< \infty$. Assume that $w\in A_{s}^-$.
 Then $\Phi$ is dense in $L^s(w)$ and $(D_{\rm left})^{-\alpha}(\Phi)\subset \Phi$.
 \end{lem}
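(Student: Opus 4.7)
The lemma has two parts, which I will prove separately.

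\textbf{Invariance $(D_{\rm left})^{-\alpha}(\Phi)\subset\Phi$.} This is a direct Fourier-analytic check. From Section \ref{powers} we have $\widehat{R_\alpha\varphi}(\xi)=(i\xi)^{-\alpha}\widehat\varphi(\xi)$. The multiplier $(i\xi)^{-\alpha}$ is smooth off $\xi=0$, with $k$-th derivative of size $O(|\xi|^{-\alpha-k})$; meanwhile $\widehat\varphi\in\mathcal{S}$ vanishes to infinite order at $0$ for $\varphi\in\Phi$. Leibniz's rule then yields $\big|\partial^m_\xi\big[(i\xi)^{-\alpha}\widehat\varphi(\xi)\big]\big|\le C_{m,N}|\xi|^{N-\alpha-m}$ near the origin for any $N$, so every derivative of the product extends continuously to $0$ with value zero. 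Combined with the Schwartz decay at infinity, this shows $(i\xi)^{-\alpha}\widehat\varphi\in\mathcal{S}$ and that it still vanishes to infinite order at $0$. Inverting the Fourier transform gives $R_\alpha\varphi\in\Phi$.

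\textbf{Density of $\Phi$ in $L^s(w)$.} Adapting the two-sided argument of \cite{NS} to the one-sided setting, the plan is a three-step reduction. First, $C_c^\infty(\R)$ is dense in $L^s(w)$: by Remark \ref{positivity}, both $w$ and $w^{1-s'}$ are locally integrable, so each $f\in L^s(w)$ is approximated by $f\chi_{[-N,N]}$ (dominated convergence) and the latter are then mollified with a standard bump.

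Second, for $g\in C_c^\infty$ I construct $\varphi_\lambda\in\Phi$ with $\varphi_\lambda\to g$ in $L^s(w)$. Pick $h\in\mathcal{S}$ with $\widehat h\in C_c^\infty(\R)$ identically $1$ on a neighborhood of the origin (such $h$ is the inverse Fourier transform of a smooth bump). For $\lambda>0$ set $h_\lambda(x):=\lambda^{-1}h(x/\lambda)$, so $\widehat{h_\lambda}(\xi)=\widehat h(\lambda\xi)$ equals $1$ on a neighborhood of $0$ of radius $\sim 1/\lambda$. Put $\varphi_\lambda:=g-g*h_\lambda\in\mathcal{S}$. Its Fourier transform $\widehat g(\xi)(1-\widehat h(\lambda\xi))$ vanishes on this neighborhood, hence to infinite order at $0$, so $\varphi_\lambda\in\Phi$ for every $\lambda>0$. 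Third, I must show $\|g*h_\lambda\|_{L^s(w)}\to 0$ as $\lambda\to\infty$. Writing $\mathrm{supp}\,g\subset[-R,R]$ and using the Schwartz decay $|h(z)|\le C_N(1+|z|)^{-N}$, one obtains the pointwise bound
\[
|g*h_\lambda(x)|\le \frac{C_N\,\|g\|_1\,\lambda^{N-1}}{\big(\lambda+\mathrm{dist}(x,[-R,R])\big)^{N}}.
\]
The weighted integral is then split into the regions $|x|\le 2R$, $2R<|x|\le\lambda$, and $|x|>\lambda$: on the first, local integrability of $w$ together with the bound $O(1/\lambda)$ suffices, and on the latter two one takes $N$ large and invokes the one-sided tail integrability of $w$ at infinity provided by the $A_s^-$ analog of Remark \ref{property1}.

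\textbf{Main obstacle.} The delicate point is the last step. Since $\|h_\lambda\|_1=\|h\|_1$ is preserved, convolution with $h_\lambda$ is not by itself small in $L^s(w)$-operator norm. Two standard simplifications are unavailable: the two-sided Hardy--Littlewood maximal function is not bounded on $L^s(w)$ for general $w\in A_s^-$, and the compactness of $\mathrm{supp}\,\widehat h$ forbids taking $h$ with one-sided support (which would enable a clean $M^-$ domination). The estimate therefore must be built directly from the explicit Schwartz decay of $h$ together with the asymmetric tail behavior characteristic of $A_s^-$ weights.
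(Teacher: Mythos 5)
Your proof of the invariance $(D_{\rm left})^{-\alpha}(\Phi)\subset\Phi$ via the multiplier $(i\xi)^{-\alpha}$ is correct and self-contained; the paper simply cites \cite[Lemma 8.1]{Sa} for this. The reduction to approximating $g\in C_c^\infty$ is also fine. The problem is the second half of your density argument, and it is not a technical gap that more careful estimation can close: the construction $\varphi_\lambda=g-g*h_\lambda$ with $\widehat h\in C_c^\infty$ cannot work for general $w\in A_s^-$. Take $w(x)=e^{-x}$, which satisfies the $A_s^-$ condition for every $s$ (the exponential factors in the two averages cancel exactly). This weight grows exponentially as $x\to-\infty$, and there is no analogue of Remark \ref{property1} controlling the \emph{left} tail of $w$ itself when $w\in A_s^-$: that remark (and its reflections) give polynomial tail bounds only for $w$ to the right and for $w^{1-s'}$ to the left. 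Since $\widehat h$ has compact support, $h$ is entire of exponential type and cannot decay exponentially on $\R$ (else $\widehat h$ would be analytic and compactly supported, hence zero), so $g*h_\lambda$ has a left tail decaying slower than any $e^{-\varepsilon|x|}$. Consequently $\int_{-\infty}^{0}|g*h_\lambda|^s e^{-x}\,dx=+\infty$ for every $\lambda$: your approximants $\varphi_\lambda$ do not even lie in $L^s(w)$, and your estimate on the region $2R<|x|\le\lambda$, $x<0$, which is of size $\lambda^{-s}\int_{-\lambda}^{-2R}w\sim\lambda^{-s}e^{\lambda}$, already diverges. You correctly identified this as the main obstacle, but the proposed resolution (Schwartz decay plus one-sided tail integrability) invokes an integrability of $w$ at $-\infty$ that $A_s^-$ weights simply do not possess.

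The paper's proof avoids this by dropping the requirement that $\widehat{g}$ equal $1$ \emph{identically} near the origin — membership in $\Phi$ only needs all derivatives of the Fourier transform to vanish \emph{at} $0$. It uses Rychkov's function \cite{Ry}: a real $g\in\mathcal{S}$ supported in $[1,\infty)$ with $\int g=1$ and $\int x^k g\,dx=0$ for all $k\in\N$ (equivalently $\widehat g(0)=1$, $\widehat g^{(k)}(0)=0$). Setting $g_N(x)=N^{-1}g(x/N)$ and $f_N=f-g_N*f$, one still gets $\widehat{f_N}(\xi)=(1-\widehat g(N\xi))\widehat f(\xi)$ vanishing to infinite order at $0$, so $f_N\in\Phi$; but now $g_N$ is supported in $[N,\infty)$, so $g_N*f$ only samples $f$ to the left of $x$, is dominated pointwise by a constant times $M^-f$, and is supported in a right half-line (hence lies in $L^s(w)$). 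Boundedness of $M^-$ on $L^s(w)$ for $w\in A_s^-$, pointwise convergence $g_N*f\to0$, and dominated convergence finish the proof. The one-sided support of the mollifier is exactly the ingredient your band-limited $h$ is structurally forbidden from having.
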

 
By reversing the orientation of the real line  we have the corresponding result for weights
in $A_s^+$ and $(D_{\rm right} )^{-\alpha}$.

\begin{proof}[Proof of Lemma \ref{Densidad}]
The proof follows the ideas in \cite{NS}.
It follows from \cite[Lemma 8.1,p. 148]{Sa} that $(D_{\rm left})^{-\alpha}(\Phi)\subset \Phi$ .
Let $C_c^\infty$ be the set  of smooth functions on $\R$ with compact support.
In order to prove the density of $\Phi$ it suffices to prove that $\Phi$ is dense in 
$C_c^\infty$ in the norm of $L^s(w)$.

Let $f \in C_c^\infty$. Rychkov \cite[\S4]{Ry} has proved that there exists a real function
$g\in \mathcal{S}$ supported in $[1,\infty)$ with the following properties:
$\int g=1$ and $\int x^kg(x)\,dx=0$, for all $k\in\N$,
or, equivalently,
$\widehat{g}(0)=1$ and $\frac{d^k\widehat{g}}{dx^k}(0)=0$, for all $k\in\N$.
For each $N\in\mathbb{N}$, let $g_N(x):=\frac1{N}g(\frac{x}{N})$ and 
$f_N:=f-g_N\ast f$. It is clear that $f_N\in \mathcal{S}$. Furthermore, 
$\widehat{f_N}(\xi)=(1-\widehat{g}(N\xi))\widehat{f}(\xi)$.
It follows that $f_N\in \Phi$. It is obvious that $\lim_{N\to\infty}g_N(x)=0$ for all $x$.
Since $g\in \mathcal{S}$ and is supported in $(0,\infty)$,
there exists an integrable function $F\ge 0$ with support in $(0,\infty)$ and decreasing in $(0,\infty)$
such that $g\leq F$. Then,  $|g_N\ast f|\leq CM^-f$, with $C=\int F$.
We know that $M^-f\in L^s(w)$ because $w\in A_s^-$. Therefore, by the dominated convergence theorem,
$\lim_{N\to\infty}\int|g_N\ast f|^sw=0.$
In other words, $\lim_{N\to\infty}f_N=f$ in the norm of $L^s(w)$, which proves the Lemma.
\end{proof}

The following lemmas will be crucial in the proofs. 
The first one follows from Proposition 2 and Lemma 6 in \cite[pp~953-- 954]{RS}.

\begin{lem}[\cite{RS}] \label{segovialiliana} 
Let $0<\alpha<1$,  $1<p< \frac{1}{\alpha}$ and  $p'=p/(p-1)$.
Let  $w$ be a nonnegative measurable function such that $w^{-p^\prime}$
is locally integrable.
The following statements are equivalent.
\begin{itemize}
\item[(a)] $(D_{\rm right})^{-\alpha}f(x)< +\infty$ a.e. for all nonnegative measurable functions $f\in L^p(w^p)$.
\item[(b)] There exists a pair of  real numbers $a<b$ such that
$$\int_b^\infty\frac{{w^{-p^\prime}}(y)}{(y-a)^{(1-\alpha)p^\prime}}\,dy<+\infty.$$
\item[(c)] For all pair of  real numbers $a<b$
$$\int_b^\infty\frac{{w^{-p^\prime}}(y)}{(y-a)^{(1-\alpha)p^\prime}}\,dy<+\infty.$$
\end{itemize}
\end{lem}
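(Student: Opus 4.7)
The plan is to establish the cycle (c) $\Rightarrow$ (b), (b) $\Rightarrow$ (c), (c) $\Rightarrow$ (a), and (a) $\Rightarrow$ (b). The implication (c) $\Rightarrow$ (b) is immediate, and (b) $\Rightarrow$ (c) I would handle by direct comparison: given finiteness at some pair $a_0<b_0$ and an arbitrary target pair $a<b$, split the integral at a large threshold $b''$. On $[b'',\infty)$ the ratio $(y-a_0)/(y-a)$ is uniformly bounded so this tail is controlled by the hypothesis in (b); on the bounded piece $[b,b'']$ one uses $(y-a)\geq b-a>0$ together with local integrability of $w^{-p'}$.

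For (c) $\Rightarrow$ (a), fix a nonnegative $f\in L^p(w^p)$ and split
$$\Gamma(\alpha)W_\alpha f(x_0)=\int_{x_0}^{x_0+1}\frac{f(t)}{(t-x_0)^{1-\alpha}}\,dt+\int_{x_0+1}^\infty\frac{f(t)}{(t-x_0)^{1-\alpha}}\,dt.$$
H\"older's inequality on the tail combined with (c) applied at $a=x_0$, $b=x_0+1$ shows the tail is finite for \emph{every} $x_0\in\R$. For the local part, I would integrate over $x_0\in(-N,N)$ and apply Fubini: the inner integral $\int (t-x_0)^{\alpha-1}\,dx_0$ is bounded by $1/\alpha$, so the double integral is controlled by $(1/\alpha)\int_{-N}^{N+1}f(t)\,dt$, which is finite by H\"older and local integrability of $w^{-p'}$. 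Hence the local part is almost everywhere finite on each $(-N,N)$, and therefore almost everywhere finite on $\R$.

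The hard direction is (a) $\Rightarrow$ (b), which I would prove by contrapositive, invoking the standard converse to H\"older's inequality: every nonnegative measurable $G$ with $\int G^{p'}\,dy=+\infty$ admits some $h\in L^p_+$ such that $\int Gh\,dy=+\infty$. Assuming (b) fails, fix any $a_0<b_0$ and take
$$G(y):=w^{-1}(y)\,(y-a_0)^{\alpha-1}\,\chi_{[b_0,\infty)}(y),$$
so that $\int G^{p'}=+\infty$ by hypothesis. Pick a corresponding $h\in L^p_+$ and set $f:=h/w$; since $fw=h\in L^p$, one has $f\in L^p(w^p)$. The key is the monotonicity provided by $\alpha-1<0$: for $x_0\in(2a_0-b_0,\,a_0)$ and $y\geq b_0$ one has $a_0-x_0<b_0-a_0\leq y-a_0$, whence $y-x_0\leq 2(y-a_0)$ and consequently $(y-x_0)^{\alpha-1}\geq 2^{\alpha-1}(y-a_0)^{\alpha-1}$. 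Therefore
$$W_\alpha f(x_0)\geq \frac{2^{\alpha-1}}{\Gamma(\alpha)}\int_{b_0}^\infty \frac{h(y)}{w(y)}\,(y-a_0)^{\alpha-1}\,dy=\frac{2^{\alpha-1}}{\Gamma(\alpha)}\int Gh\,dy=+\infty$$
on an entire interval of positive length, contradicting (a). The principal obstacle is precisely this last step: building, from the pointwise divergence of a single weight integral tied to $(a_0,b_0)$, a function $f\in L^p(w^p)$ whose Weyl transform simultaneously diverges on a whole left-neighborhood of $a_0$; it is the negativity of $\alpha-1$ that lets a single $G$ keyed to $a_0$ drive the blowup at all nearby $x_0<a_0$.
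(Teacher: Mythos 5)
Your argument is correct, and it is worth noting that the paper itself does not prove Lemma \ref{segovialiliana}: the statement is imported wholesale from de Rosa--Segovia \cite{RS} (Proposition 2 and Lemma 6 there, as the paper indicates just before the lemma), so there is no internal proof to compare against. Your self-contained route is sound: (c)$\Leftrightarrow$(b) by comparing tails and using the local integrability of $w^{-p'}$; (c)$\Rightarrow$(a) by splitting $W_\alpha f$ at distance $1$, with H\"older on the tail (taking $a=x_0$, $b=x_0+1$ in (c)) and Tonelli on the local piece; and (a)$\Rightarrow$(b) by the converse of H\"older's inequality applied to $G=w^{-1}(\cdot-a_0)^{\alpha-1}\chi_{[b_0,\infty)}$, where the decisive observation is that the monotonicity of $u\mapsto u^{\alpha-1}$ makes the single function $f=h/w$ force $W_\alpha f=+\infty$ on the entire interval $(2a_0-b_0,a_0)$, a set of positive measure. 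The only step deserving slightly more care is the converse-H\"older invocation: you should record that $w>0$ a.e.\ (so that $G<+\infty$ a.e.\ and $f=h/w$ is well defined), which follows from the local integrability of $w^{-p'}$; with that in place the standard construction of $h\in L^p_+$ with $\int Gh=+\infty$ goes through. What your proof buys over the paper's citation is a short, elementary and self-contained verification accessible without consulting \cite{RS}; what the citation buys is brevity and the placement of the statement in the one-sided Littlewood--Paley framework where it originally arises.
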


The following lemma follows from Theorem 3 in \cite{AS} by a very simple translation argument.

 \begin{lem}[\cite{AS}] \label{andersensawyer} Let   $1<p<+\infty$ and  $p'=p/(p-1)$.
 Let $a\in\R$ and  let  $v$ be a finite nonnegative measurable function on $(a,\infty)$.
 The following statements are equivalent.
\begin{itemize}
\item[(a)] There exist a positive measurable function $\omega$ on $(a,\infty)$ and a positive constant $C$  such that 
$$\int_a^\infty |M^+f|^p\omega^p\leq C\int_a^\infty |f|^pv^p,$$
for all measurable functions $f$.
\item[(b)]For all  $b>a$
$$\sup_{S>b-a}\frac1{S^{p^\prime}}\int_{b}^{a+S}{v^{-p^\prime}(y)}\,dy<+\infty.$$
\end{itemize}
 \end{lem}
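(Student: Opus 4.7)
I would reduce the lemma to the special case $a=0$, in which it coincides with Theorem~3 of Andersen--Sawyer~\cite{AS}. The reduction is a straightforward translation argument exploiting the translation invariance of both $M^+$ and Lebesgue measure.

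\textbf{Translation step.} Given $f$ on $(a,\infty)$, I would set $\tilde f(y):=f(y+a)$, $\tilde v(y):=v(y+a)$, and $\tilde\omega(y):=\omega(y+a)$ for $y>0$. Since translations commute with $M^+$, one has $M^+f(x+a)=M^+\tilde f(x)$ for $x>0$, and the change of variables $y=x-a$ yields
$$\int_a^\infty|M^+f|^p\omega^p\,dx=\int_0^\infty|M^+\tilde f|^p\tilde\omega^p\,dy,\qquad\int_a^\infty|f|^pv^p\,dx=\int_0^\infty|\tilde f|^p\tilde v^p\,dy.$$
Therefore (a) on $(a,\infty)$ for the triple $(f,v,\omega)$ is equivalent to (a) on $(0,\infty)$ for $(\tilde f,\tilde v,\tilde\omega)$. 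The very same substitution turns the integral in (b) into $\int_{b-a}^{S}\tilde v^{-p'}(y)\,dy$, and the range $S>b-a$ is exactly the constraint for the parameter $\tilde b:=b-a>0$ in (b) when $a=0$. This identifies our statement with the $a=0$ statement of Andersen--Sawyer.

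\textbf{The quoted ingredient.} Once the problem is reduced to $a=0$ the equivalence is precisely \cite[Theorem~3]{AS}. The easy direction (a)$\Rightarrow$(b) there is obtained by testing the boundedness inequality with the family $f_S=v^{-p'}\chi_{[b,S]}$, for which $|f_S|^p v^p=v^{-p'}$ and, for $x\in(0,b)$, the choice $h=S-x$ gives $M^+f_S(x)\ge S^{-1}\int_b^S v^{-p'}$; inserting these into (a) and rearranging produces the finite supremum in (b). The nontrivial direction (b)$\Rightarrow$(a) is where one actually constructs an admissible $\omega$ from the hypothesis (b), using the Sawyer-type testing condition for the one-sided maximal operator together with a truncation/decay recipe that manufactures $\omega$ explicitly on $(0,\infty)$.

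\textbf{Main obstacle.} The translation step is completely routine; the technical heart of the argument lives inside \cite[Theorem~3]{AS}. The hard part is precisely the construction of a weight $\omega$ out of the one-sided tail hypothesis (b): a crude Hölder estimate $M^+f(x)\le\|f\|_{L^p(v^p)}\sup_{h>0}h^{-1}(\int_x^{x+h}v^{-p'})^{1/p'}$ is useless because the right-hand side blows up as $h\to0^+$, so any workable production of $\omega$ must instead exploit the global tail structure of $v^{-p'}$ recorded in (b). Since the paper itself appeals to \cite{AS} for exactly this point, I would also quote it as a black box rather than reproving it.
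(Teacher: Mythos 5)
Your proposal matches the paper exactly: the paper offers no proof beyond the remark that the lemma ``follows from Theorem 3 in \cite{AS} by a very simple translation argument,'' and you carry out precisely that translation reduction and then quote \cite[Theorem~3]{AS} as a black box. Your explicit verification that the substitution $y\mapsto y+a$ maps condition (b) for $b>a$ onto the $a=0$ condition with parameter $\tilde b=b-a$ is correct and in fact more detailed than what the paper records.
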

 
 \begin{rem}\label{remark}
 {\it Note that (c) in Lemma \ref{segovialiliana} implies (b) in Lemma \ref{andersensawyer}. In fact,  
 if $S>b-a$ then
$$
 \aligned
 \frac1{S^{p^\prime}}\int_{b}^{a+S}{w^{-p^\prime}(y)}\,dy
 &\leq\frac{1}{(b-a)^{\alpha p^\prime}}
 \frac{1}{S^{(1-\alpha)p^\prime}} \int_{b}^{a+S}{w^{-p^\prime}(y)}\,dy\\
&\leq
\frac{1}{(b-a)^{\alpha p^\prime}}
 \int_{b}^{a+S}\frac{w^{-p^\prime}(y)}{(y-a)^{(1-\alpha)p^\prime}}\,dy.
 \endaligned$$}
  \end{rem}
  
\subsection{Proofs of Theorems \ref{Inversionlateral2} and \ref{Inversionlateral3}}

\begin{proof}[Proof of Theorem  \ref{Inversionlateral3}] 
We follow the ideas   in \cite[\S6]{Sa}.

It suffices to prove that for fixed $a\in\R$, $f\in L^p(w^p)$, $f\ge 0$, we have that  
\begin{equation}
\label{formula}
\lim_{\varepsilon\to0^+}(D_{\rm right} )_\varepsilon^\alpha ((D_{\rm right} )^{-\alpha} f)(x)=f(x)
\end{equation}
for a.e. $x>a$.
Let $A_f=\{x>a:W_\alpha f(x)<+\infty\}$ and $B_f=\{x>a: M^+f(x)<+\infty\}$.
By the assumption  and by Remark \ref{remark} we have
that statement (a) in Lemma  \ref{andersensawyer} holds and, consequently,
$|(a,\infty)\setminus A_f\cap B_f|=0$.

We are going to prove that \eqref{formula} holds for $x\in A_f\cap B_f$
(consequently, for almost every  $x>a$).

\smallskip

\noindent
\textbf{Claim.} For all $x\in A_f\cap B_f$,
 $$(D_{\rm right} )_\varepsilon^\alpha (W_\alpha f)(x)=\frac{1}{\Gamma(-\alpha)}
\int_{-\infty}^0 f(x-y) \tfrac{1}{\varepsilon} \tilde{k}(\tfrac{y}{\varepsilon})\, dy =
 f\ast\tilde{k}_\varepsilon(x),$$
 where, for $x<0$,  $\displaystyle\tilde{k}(x)= \frac{1}{\Gamma(-\alpha)|x|} \int_{x}^{0} k(r)\, dr$,
 with
 \begin{equation}\label{eq:k}
 k(r)= \frac{1}{\Gamma(\alpha)} \left[|r+1|^{\alpha-1}\chi_{(-\infty,-1)}(r)-|r|^{\alpha-1}
\chi_{(-\infty,0)}(r)\right],
\end{equation}
 and $\tilde{k}(x)=0$ for $x>0$.

\smallskip

Using this claim, the proof of \eqref{formula} is straightforward. We observe that
\begin{equation}\label{eq:k tilde}
\tilde{k}(x)= \frac{-1}{\alpha\Gamma(-\alpha) \Gamma(\alpha)}
\left[ \frac{|x+1|^{\alpha}-|x|^{\alpha}}{|x|} \chi_{(-\infty,-1)}(x) -|x|^{\alpha-1}  \chi_{(-1,0)}(x)\right].
\end{equation}
Since $\tilde{k}$ is integrable with $\int \tilde{k}=1$ (see Lemma \ref{11}), has support in
$(-\infty,0)$,  is increasing and nonnegative in that interval (see \cite{Sa}), we obtain
by the results regarding approximations of the identity
(Subsection \ref{approx})
that for a.e.  $x\in A_f\cap B_f$
$$(D_{\rm right} )_\varepsilon^\alpha (W_\alpha f)(x)=f\ast\tilde{k}_\varepsilon(x)\leq M^+f(x),$$
and
$$\lim_{\varepsilon\to 0^+}
(D_{\rm right} )_\varepsilon^\alpha (W_\alpha f)(x)=\lim_{\varepsilon\to 0^+}
 f\ast\tilde{k}_\varepsilon(x)=f(x),$$
almost everywhere because statement (a) in  Lemma \ref{andersensawyer} holds (see Remark \ref{remark}).
\end{proof}

 \begin{proof}[Proof of the Claim]
We follow the proof in \cite[Chapter~2,~Lemma~6.1,~p.~184]{Sa}.
The difficulties and differences appear because we are working in weighted spaces
and the computations must be properly justified.

Let us fix $x\in A_f\cap B_f$.  The function
 $h_x(t):= W_\alpha f(x+t)- W_\alpha f(x)$
 is defined for a.e. $t$. We have
\begin{equation*}\label{cuenta1}
\begin{aligned}
W_\alpha f(x+t)- W_\alpha f(x)
&= \frac{1}{\Gamma(\alpha)} \int_0^\infty \frac{f(x+t+s)}{s^{1-\alpha}}\, ds - \frac{1}{\Gamma(\alpha)} \int_0^\infty \frac{f(x+s)}{s^{1-\alpha}}\, ds\\
&= \frac{t^\a}{\Gamma(\alpha)}\int_{-\infty}^{-1} \frac{f(x-tr)}{|r+1|^{1-\alpha}}\, dr
- \frac{t^\a}{\Gamma(\alpha)} \int_{-\infty}^0 \frac{f(x-tr)}{|r|^{1-\alpha}}\, dr\\
&= t^{\alpha}\int_{-\infty}^0 f(x-tr) k(r)\, dr,
\end{aligned}
\end{equation*}
where $k(r)$ is as in \eqref{eq:k}. Let us see that for all $\varepsilon>0$, 
\begin{equation*}\label{Fubini}
(D_{\rm right} )_\varepsilon^\alpha (W_\alpha f)(x)=
\frac{1}{\Gamma(-\alpha)}\int_{\varepsilon}^\infty
\frac1t\int_{-\infty}^0f(x-tr)k(r)\,dr\,dt
\end{equation*} 
is well defined. In other words, we are going to show that the following quantity
\begin{equation*}\label{tonelli}
\begin{aligned}
 \int_{\varepsilon}^\infty
\frac1t\int_{-\infty}^0f(x-tr)|k(r)|\,dr\,dt
&= \int_{\varepsilon}^\infty
\frac1{t^2}\int_{-\infty}^0f(x-y)\left|k\left(\frac{y}{t}\right)\right|\,dy\,dt \\
&=\int_{-\infty}^0
f(x-y)\int_{\varepsilon}^\infty
\frac1{t^2}\left|k\left(\frac{y}{t}\right)\right|\,dt\,dy \\
&= \int_{-\infty}^0
f(x-y)\frac{1}{|y|}\int_{y/\varepsilon}^0
|k(r)|\,dr\,dy,
\end{aligned}
\end{equation*}
 is finite. Let us  define $\displaystyle\overline k(x)=\frac{1}{|x|}\int_x^0|k(r)|\,dr$, for $x<0$, and
 $\overline k(x)=0$, for $x>0$. Then we have to show that
$$I:=\int_{-\infty}^0f(x-y)\tfrac{1}{\varepsilon}\overline k(\tfrac{y}{\varepsilon})\,dy<+\infty.$$
Note that for $x<0$,
$$\aligned
\bar{k}(x)&= C(\alpha) \frac{2+|x+1|^{\alpha}-|x|^{\alpha}}{|x|}\chi_{(-\infty,-1)}(x)+C(\alpha) |x|^{\alpha-1} \chi_{(-1,0)}(x)\\
&= \bar{k}_1(x)+\bar{k}_2(x).
\endaligned$$
Then $I=I_1+I_2$, where $I_i$ is the integral against the kernel $\bar{k}_i$, $i=1,2$.
Using that $\bar{k}_1 (x) \leq\frac{c}{|x|}$, for $x<-1$, we have that
$$\aligned
I_1 &\leq C\int_{-\infty}^{-\epsilon} |f(x-y)| \frac{1}{|y|}\, dy
\leq C ||f||_{L^p(w^p)} \left(\int_{-\infty}^{-\epsilon} \frac{w^{-p'}(x-y)}{|y|^{p'}}\,dy\right)^{1/p'}\\
&=||f||_{L^p(w^p)} \left(\int_{x+\epsilon}^{\infty} \frac{w^{-p'}(z)}{(z-x)^{p'}}\,dz\right)^{1/p'}.
\endaligned
$$
Now note the las integral is dominated by 
$$
\left(\frac{1}{\varepsilon^{p^\prime}} \left(\int_{x+\epsilon}^{x+\epsilon+1} w^{-p'}(z)\,dz\right)^{1/p'}
+
\left(\int_{x+\epsilon+1}^{\infty} \frac{w^{-p'}(z)}{(z-x)^{(1-\alpha)p'}}\,dz\right)^{1/p'}
\right)
<\infty.
$$
In the last inequality we use Lemma \ref{segovialiliana} 
and the local integrability of $w^{-p^\prime}$.
To estimate $I_2$ we 
apply the results on approximation of the identities to the kernel $\bar{k}_2$.
Therefore 
 $$I_2\leq\left(\int\bar{k}_2\right) M^+f(x)<+\infty.$$ 
 Once we have that $(D_{\rm right} )_\varepsilon^\alpha (W_\alpha f)(x)$ is well defined for $x\in A_f\cap B_f$, we apply Fubini's theorem in \eqref{Fubini} and we get the claim.
\end{proof}

\begin{proof}[Proof of Theorem  \ref{Inversionlateral2}]
By Remark \ref{positivity}, we 
know that  $w^{-p^\prime}$ is locally integrable.
By Theorem \ref{acotfrac}
we have that
$(D_{\rm right})^{-\alpha}:L^p(w^p)\to L^q(w^q)$ is bounded. Therefore, 
$(D_{\rm right})^{-\alpha}f(x)<\infty$ a.e. for all $f\in L^p(w^p)$.
As before, we may assume that $f\ge 0$. By what we have already seen in the proof of 
Theorem  \ref{Inversionlateral3}, for all $f\in L^p(w^p)$, $f\ge 0$,
$$\lim_{\varepsilon\to 0^+}
(D_{\rm right} )_\varepsilon^\alpha (W_\alpha f)(x)=
 f\ast\tilde{k}_\varepsilon(x)=f(x), $$
in the a.e. sense. Furthermore,
$f\ast\tilde{k}_\varepsilon(x)\leq M^+f(x).$
It is easily seen that $w\in A_{p,q}^+$ implies $w^p\in A_p^+$.
Therefore, $M^+:L^p(w^p)\to L^p(w^p)$
is bounded and, consequently,  $(D_{\rm right} )_\varepsilon^\alpha (W_\alpha f)(x)$
converges to $f(x)$ in the norm of $L^p(w^p)$.
\end{proof}

\begin{lem}\label{11}
Let $\tilde{k}(x)$ be as in \eqref{eq:k tilde}.
Then $\displaystyle \int_{\mathbb{R}} \tilde{k}(x) dx =1$.
\end{lem}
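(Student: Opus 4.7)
The plan is to evaluate $\int_{\mathbb{R}}\tilde k(x)\,dx$ directly by splitting the integral according to the two supports appearing in \eqref{eq:k tilde} and computing each piece. Since both summands vanish off $(-\infty,0)$, write
$$\int_{\mathbb{R}}\tilde k(x)\,dx=\frac{-1}{\alpha\,\Gamma(-\alpha)\Gamma(\alpha)}\bigl[I_{1}-I_{2}\bigr],$$
where
$$I_{1}:=\int_{-\infty}^{-1}\frac{|x+1|^{\alpha}-|x|^{\alpha}}{|x|}\,dx,\qquad I_{2}:=\int_{-1}^{0}|x|^{\alpha-1}\,dx.$$
The piece $I_{2}$ is elementary: the substitution $u=-x$ immediately gives $I_{2}=\int_{0}^{1}u^{\alpha-1}\,du=1/\alpha$.

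The heart of the proof is the evaluation of $I_{1}$. I would first change variables $y=-x$ (so $y>1$), producing
$$I_{1}=\int_{1}^{\infty}\frac{(y-1)^{\alpha}-y^{\alpha}}{y}\,dy.$$
Note that the integrand is negative of order $y^{\alpha-2}$ at infinity, so absolute convergence is guaranteed since $\alpha<1$. Then the inversion $u=1/y$ (noting that $dy/y=-du/u$ and $(y-1)^{\alpha}-y^{\alpha}=y^{\alpha}[(1-u)^{\alpha}-1]$) recasts $I_{1}$ as
$$I_{1}=\int_{0}^{1}u^{-\alpha-1}\bigl[(1-u)^{\alpha}-1\bigr]\,du.$$

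Next I would integrate by parts, taking $dv=u^{-\alpha-1}\,du$ (so $v=-u^{-\alpha}/\alpha$) and $w=(1-u)^{\alpha}-1$ (so $dw=-\alpha(1-u)^{\alpha-1}\,du$). The boundary term at $u=1$ contributes $1/\alpha$; the endpoint at $u=0$ vanishes because the Taylor expansion $(1-u)^{\alpha}-1=-\alpha u+O(u^{2})$ makes $u^{-\alpha}\bigl((1-u)^{\alpha}-1\bigr)\sim -\alpha u^{1-\alpha}\to 0$. The resulting integral is the classical Beta integral
$$\int_{0}^{1}u^{-\alpha}(1-u)^{\alpha-1}\,du=B(1-\alpha,\alpha)=\Gamma(1-\alpha)\Gamma(\alpha),$$
so $I_{1}=\dfrac{1}{\alpha}-\Gamma(1-\alpha)\Gamma(\alpha)$ and hence $I_{1}-I_{2}=-\Gamma(1-\alpha)\Gamma(\alpha)$.

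Finally, substituting back and applying the functional equation $\Gamma(1-\alpha)=-\alpha\,\Gamma(-\alpha)$ (equivalently, the identity already displayed in \eqref{gamma1}) collapses the prefactor:
$$\int_{\mathbb{R}}\tilde k(x)\,dx=\frac{\Gamma(1-\alpha)}{\alpha\,\Gamma(-\alpha)}=1.$$
The main obstacle is the evaluation of $I_{1}$: the integrand is a subtle cancellation between two terms that are each non-integrable, so one must either perform the $u=1/y$ inversion (to move the singularity to $u=0$ where the Taylor cancellation is visible) or integrate by parts directly on $[1,\infty)$ to recognize a Beta-type integral. Once $I_{1}$ is expressed via $\Gamma(1-\alpha)\Gamma(\alpha)$, the rest is bookkeeping with the functional equation for $\Gamma$.
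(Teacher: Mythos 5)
Your evaluation of the two pieces is essentially the paper's own computation: both arguments reduce everything to the Beta integral $B(\alpha,1-\alpha)=\Gamma(\alpha)\Gamma(1-\alpha)$ and both arrive at $I_1-I_2=-\Gamma(\alpha)\Gamma(1-\alpha)$. (The paper gets there by the telescoping manipulation $\int_1^R\frac{(r-1)^\alpha}{r}\,dr-\int_1^R\frac{(r-1)^\alpha}{r-1}\,dr=-\int_1^R\frac{(r-1)^{\alpha-1}}{r}\,dr=-\int_0^{R-1}\frac{z^{\alpha-1}}{1+z}\,dz$, with the leftover boundary term $\frac{R^\alpha-(R-1)^\alpha}{\alpha}\to0$; your inversion $u=1/y$ followed by integration by parts is an equally clean route to the same Beta integral, and your handling of the endpoint behaviour is correct.)

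The problem is your last line. From the identity you yourself invoke, $\Gamma(1-\alpha)=-\alpha\Gamma(-\alpha)$, one gets
$$\frac{\Gamma(1-\alpha)}{\alpha\,\Gamma(-\alpha)}=\frac{-\alpha\Gamma(-\alpha)}{\alpha\Gamma(-\alpha)}=-1,$$
not $+1$ (note $\Gamma(-\alpha)=\int_0^\infty(e^{-t}-1)t^{-1-\alpha}\,dt<0$ for $0<\alpha<1$, so $\Gamma(1-\alpha)$ and $\alpha\Gamma(-\alpha)$ have opposite signs). Taken literally, your argument therefore proves $\int\tilde k=-1$. The discrepancy is not in your evaluation of $I_1-I_2$ but in the overall sign of \eqref{eq:k tilde}: recomputing $\tilde k$ directly from its definition $\tilde k(x)=\frac{1}{\Gamma(-\alpha)|x|}\int_x^0k(r)\,dr$ with $k$ as in \eqref{eq:k} gives, for instance for $-1<x<0$, $\tilde k(x)=\frac{-|x|^{\alpha-1}}{\alpha\Gamma(-\alpha)\Gamma(\alpha)}$, so the prefactor in \eqref{eq:k tilde} should be $+\frac{1}{\alpha\Gamma(-\alpha)\Gamma(\alpha)}$ rather than $-\frac{1}{\alpha\Gamma(-\alpha)\Gamma(\alpha)}$; this is also the choice consistent with the assertion, used in the proof of Theorem \ref{Inversionlateral3}, that $\tilde k\geq0$. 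With the corrected prefactor your computation yields $\frac{-\Gamma(1-\alpha)}{\alpha\Gamma(-\alpha)}=1$ as desired. You should either work from the corrected constant or at least not claim that $\Gamma(1-\alpha)=-\alpha\Gamma(-\alpha)$ ``collapses'' $\frac{\Gamma(1-\alpha)}{\alpha\Gamma(-\alpha)}$ to $1$: as written, that step is false.
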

\begin{proof}
Observe that
\begin{align*}
\int_1^R&\frac{(r-1)^\alpha-r^\alpha}{r}\,dr - \int_0^1 \frac{r^\alpha}{r}\, dr = 
  \int_1^R\frac{(r-1)^\alpha}{r}\,dr -\int_0^R \frac{r^\alpha}{r}\,dr  \\
&=  \int_1^R\frac{(r-1)^\alpha}{r}\,dr -\int_1^{R}
 \frac{(r-1)^\alpha}{(r-1)}\,dr-\frac{R^\a-(R-1)^\a}{\a}  \\
 &=   -\int_1^R \frac{(r-1)^{\alpha-1} }{r} \, dr -\frac{R^\a-(R-1)^\a}{\a} 
= -\int_0^{R-1}\frac{z^{\alpha-1}}{z+1}\, dz-\frac{R^\a-(R-1)^\a}{\a}.
\end{align*}
The Lemma follows by noticing that
$\displaystyle-\lim_{R\rightarrow \infty} \int_0^{R-1}\frac{z^{\alpha-1}}{z+1}\,dz= 
\mathrm{B(\a,1-\a)}=\Gamma(\a)\Gamma(1-\a)$, where $\mathrm{B}(x,y)$
denotes the Beta function, see \cite{Lebedev}.
\end{proof}
\subsection{Proof of Theorem \ref{Rango}}

(a)$\Rightarrow$(b) and (a)$\Rightarrow$(c) are  consequences of Theorem \ref{Inversionlateral3}, its proof and the characterization of the boundedness of the one-sided fractional integral \cite{AS}. 

(b)$\Rightarrow$(a) Let $f=\lim_{\varepsilon\to0^+}(D_{\rm right} )_\varepsilon^\alpha u$
 in the norm of $L^p(w^p)$. On one hand,
 the operator $(D_{\rm right})^{-\alpha}:L^p(w^p)\to L^q(w^q)$ is bounded because $w$
 is in $A_{p,q}^+$. Then
 $(D_{\rm right})^{-\alpha}f=\lim_{\varepsilon\to0^+}(D_{\rm right})^{-\alpha}((D_{\rm right} )_\varepsilon^\alpha u)$
 in the norm of $L^q(w^q)$.  On the other hand,   
 $w\in A_{p,q}^+\Rightarrow w^q\in A_q^+\Leftrightarrow w^{-q^\prime}\in A_{q^\prime}^-$
 and, consequently, by Lemma \ref{Rango}, $\Phi$ is dense in $L^{q^\prime}(w^{-q^\prime})$.
 Next let us fix  $\varphi\in \Phi$. As usual $\langle f,g\rangle$ denotes the integral $\int fg$. Then 
 $$ \langle(D_{\rm right})^{-\alpha}f,\varphi\rangle=\lim_{\varepsilon\to0^+}
 \langle(D_{\rm right})^{-\alpha}((D_{\rm right} )_\varepsilon^\alpha u),\varphi\rangle
 =\lim_{\varepsilon\to0^+}
 \langle(D_{\rm right} )_\varepsilon^\alpha u,(D_{\rm left})^{-\alpha}\varphi\rangle.$$
 Let $h=(D_{\rm left})^{-\alpha}\varphi$. Note that $h\in \Phi$ since
 $\varphi\in \Phi$. In particular $h\in L^{q^\prime}(w^{-q^\prime})$. Therefore, $uh$ is integrable.
 By Fubini's Theorem,
 \begin{equation}\label{eq:Fubini}
 \langle(D_{\rm right} )_\varepsilon^\alpha u,h\rangle=\langle u,(D_{\rm left})_\varepsilon^{\alpha}h\rangle.
 \end{equation}
 We shall justify the application of Fubini's Theorem at the end of the proof of the implication.
 Therefore,
$$ \langle(D_{\rm right})^{-\alpha}f,\varphi\rangle=\lim_{\varepsilon\to0^+}
 \langle u,(D_{\rm left})_\varepsilon^{\alpha}h\rangle=\lim_{\varepsilon\to0^+}
 \langle u,(D_{\rm left})_\varepsilon^{\alpha}((D_{\rm left})^{-\alpha}\varphi)\rangle. $$
 By the analogue of Theorem \ref{Inversionlateral3} for left one-sided weights,
 $\lim_{\varepsilon\to0^+}
 (D_{\rm left})_\varepsilon^{\alpha}((D_{\rm left})^{-\alpha}\varphi)=\varphi$
  in the norm of $L^{q^\prime}(w^{-q^\prime})$. Since $u\in L^q(w^q)$, we finally get
$ \langle(D_{\rm right})^{-\alpha}f,\varphi\rangle=\langle u,\varphi\rangle$,
 for all $\varphi\in \Phi$. Since $\Phi$ is dense in $L^{q^\prime}(w^{-q^\prime})$,
 $(D_{\rm right})^{-\alpha}f,u\in L^q(w^q)$
 and $L^q(w^q)$ is the dual of  $L^{q^\prime}(w^{-q^\prime})$
 we obtain that $(D_{\rm right})^{-\alpha}f=u$.

Let us justify the application of Fubini's Theorem in \eqref{eq:Fubini}.
As we said, $uh$ is integrable. Therefore we only have to show that
 $$A=\int_\R\left(\int_{x+\varepsilon}^\infty\frac{|u(t)|}{(t-x)^{1+\alpha}}\,dt \right)|h(x)|\,dx<\infty.$$
 Observe that the kernel $k(x):=|x|^{-(1+\alpha)}\chi_{(-\infty,-\varepsilon)}(x)$ is integrable,  supported and increasing in $(-\infty,0)$. Then, by  \eqref{identity2},
 $$ A\leq \frac1{\alpha\varepsilon^\alpha}\int_\R |h(x)|M^+u(x)dx. $$
 We know that $h\in L^{q^\prime}(w^{-q^\prime})$. Furthermore, $M^+u\in L^q(w^q)$ since 
 $u\in L^q(w^q)$ and  $w^q\in A_q^+$ (recall that $M^+$ is bounded in
 $L^q(w^q)$). It follows that the last integral is finite.

 (c)$\Rightarrow$(a). The assumption implies the existence of a sequence $\varepsilon_k\to 0^+$ such that
 there exists $\lim_{k\to\infty}(D_{\rm right})^\a_{\varepsilon_k}u$ in the norm of $L^p(w^p)$.
 Then the proof follows as before but working only with that sequence.

\medskip

\noindent\textbf{Acknowledgments.} We are very grateful to the referee for valuable comments
that helped us to improve the presentation of the paper.



\end{document}